\newcommand{\leqnomode}{\tagsleft@true}
\newcommand{\reqnomode}{\tagsleft@false}
\theoremstyle{plain}
\newtheorem{theorem}{Theorem}[section]
\newtheorem{lemma}[theorem]{Lemma}
\newtheorem{observation}[theorem]{Observation}
\theoremstyle{remark}
\theoremstyle{plain}
\newenvironment{customtheorem}[1]
  {\innercustomthm}
  {\endinnercustomthm}
\def\d{\hbox{-}}
\newcommand{\sset}[1]{\left\{#1\right\}}
\renewcommand{\tilde}[1]{\widetilde{#1}}
\def\longbox#1{\parbox{0.85\textwidth}{#1}}
\newcommand*\samethanks[1][\value{footnote}]{\footnotemark[#1]}
\title{Strongly Perfect Claw-free Graphs - A Short Proof}
\author{Maria Chudnovsky \thanks{Supported by NSF Grant DMS-1763817. This material is based upon work supported by, or in part by, the U.S. Army Research Laboratory and the U. S. Army Research Office under grant number W911NF-16-1-0404.}
\\
Princeton University, Princeton, NJ 08544
\medskip\\
Cemil Dibek \samethanks
\\
Princeton University, Princeton, NJ 08544}
\date{\today}
\begin{document}\maketitle

\vspace{-0.3cm}

\begin{abstract}
A graph is \emph{strongly perfect} if every induced subgraph $H$ of it has a stable set that meets every maximal clique of $H$. A graph is \emph{claw-free} if no vertex has three pairwise non-adjacent neighbors. The characterization of claw-free graphs that are strongly perfect by a set of forbidden induced subgraphs was conjectured by Ravindra \cite{Ravindra2} in 1990 and was proved by Wang \cite{Wang} in 2006. Here we give a shorter proof of this characterization.
\end{abstract}

\section{Introduction} \label{sec:intro}

All graphs in this paper are finite and simple.  Let $G = (V, E)$ be a graph. For $X \subseteq V$, $G|X$ denotes the subgraph of $G$ induced by the vertex set $X$. A set $X \subseteq V$ is \emph{anticonnected} if $X \neq \emptyset$ and the graph $G^C | X$ is connected (here $G^C$ denotes the complement of $G$). A graph $G$ is anticonnected if $G^C$ is connected. An \emph{anticomponent} of a set $S \subseteq V$ is a maximal anticonnected subset of $S$. An anticomponent $D$ is \emph{non-trivial} if $|D| \geq 2$. We say that $G$ \emph{contains} a graph $H$ if $G$ has an induced subgraph isomorphic to $H$. For a graph $H$, $X \subseteq V$ is an $H$ in $G$ if $G|X$ is isomorphic to $H$.

For a vertex $v \in V$, we let $N_G(v) = N(v)$ denote the set of neighbors of $v$ in $G$, and we write $N[v] = N(v) \cup \{v\}$. We define the neighborhood of a set $U \subseteq V$ as $N(U) = \{v \in V \setminus U \: | \: uv \in E \text{ for some } u \in U\}$ and we write $N[U] = N(U) \cup U$. 

Two sets $X, Y \subseteq V$ are \emph{complete} to each other if they are disjoint and every vertex in $X$ is adjacent to every vertex in $Y$, and \emph{anticomplete} to each other if they are disjoint and no vertex in $X$ is adjacent to a vertex in $Y$. We say that $v$ is \emph{complete} (\emph{anticomplete}) to $X \subseteq V$ if $\sset{v}$ is complete (anticomplete) to $X$. A vertex $v \in V \setminus X$ that is neither complete nor anticomplete to $X$ is \emph{mixed} on X.

A \emph{clique} in $G$ is a set of pairwise adjacent vertices, and a \emph{stable set} is a set of pairwise non-adjacent vertices. A \emph{maximal clique} is a clique that is not a subset of a larger clique. A stable set in $G$ is \emph{strong} if it meets every maximal clique of $G$. A vertex $v \in V$ is \emph{simplicial} if $N(v)$ is a clique. An edge $uv \in E$ is \emph{simplicial} if $N(u)$ is complete to $N(v)$. We say that $uv$ is a {\em cosimplicial non-edge} in $G$ if $uv$ is a simplicial edge in $G^C$. A \emph{simplicial clique} in $G$ is a non-empty clique $K$ such that for every $k \in K$ the set of neighbors of $k$ in $V \setminus K$ is a clique. A \emph{matching} is a set of edges no two of which share a common vertex.

A {\em path} in $G$ is a sequence of distinct vertices $p_0 \d p_1 \d \dots \d p_k$ where $p_i$ is adjacent to $p_j$ if and only if $|i-j|=1$, and the \emph{length} of a path is the number of edges in it. A path is {\em odd} if its length is odd, and {\em even} otherwise. For a path $P$ with endpoints $a, b$, the \emph{interior} of $P$, denoted by $P^*$, is the set $V(P) \setminus \{a, b\}$. An {\em antipath} is an induced subgraph whose complement is a path. The length of an antipath is the number of edges in its complement. When we say $P = p_1 \d p_2 \d \dots \d p_\ell$ is a path from a vertex $p_1$ to a set $X$, we mean that $V(P) \cap X = \emptyset$, and $V(P) \setminus \{p_\ell\}$ is anticomplete to $X$, and $p_\ell$ has neighbors in $X$. An \emph{even pair} is a pair of vertices $\{u, v\}$ such that every path from $u$ to $v$ is even, and in particular, $u$ and $v$ are non-adjacent. We call a set of vertices \emph{consistent} if every pair of its vertices is even.  Thus, a consistent set is stable.

Let $k \geq 4$ be an integer.  A {\em hole of length $k$} in $G$ is an induced subgraph isomorphic to the $k$-vertex cycle $C_k$, and an {\em antihole of length $k$} is an induced subgraph isomorphic to $C_k^C$. A hole (or antihole) is {\em odd} if its length is odd, and {\em even} if its length is even. A \emph{square} is a hole of length four.

A \emph{star} $S_k$ is the complete bipartite graph $K_{1,k}$. In a star $S_k$ with $k \geq 2$, the unique vertex of degree $k$ is the \emph{center} of the star and the other vertices are its \emph{leaves}. The star $S_3$ is called a \emph{claw}. A graph is \emph{claw-free} if it contains no induced claw.

A graph $G$ is \emph{perfect} if every induced subgraph $H$ of $G$ satisfies $\chi(H) = \omega(H)$, where $\chi(H)$ is the chromatic number of $H$ and $\omega(H)$ is the maximum clique size in $H$. Claude Berge introduced perfect graphs and his conjecture (now the Strong Perfect Graph Theorem) was solved by Chudnovsky et al.:

\begin{theorem}\cite{SPGT}
\label{thm:SPGT}
A graph is perfect if and only if it does not contain an odd hole or an odd antihole.
\end{theorem}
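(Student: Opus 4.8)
The plan is to prove the non-trivial direction: every \emph{Berge graph} --- a graph with no odd hole and no odd antihole --- is perfect. The converse is the easy part, since for $k \geq 2$ the odd hole $C_{2k+1}$ has $\chi = 3 > 2 = \omega$ and the odd antihole $C_{2k+1}^C$ has $\chi = k+1 > k = \omega$, so any graph containing one of them is imperfect, and hence so is any graph having one as an induced subgraph.

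I would argue by contradiction, taking a \emph{minimally imperfect} Berge graph $G$, i.e.\ one that is not perfect but all of whose proper induced subgraphs are. By the classical results of Lov\'asz and Padberg such a $G$ is highly regular: $|V(G)| = \alpha(G)\,\omega(G) + 1$, every vertex lies in exactly $\omega(G)$ maximum cliques and exactly $\alpha(G)$ maximum stable sets, and $G$ has no clique cutset and no star cutset; moreover (Meyniel) $G$ has no even pair. The strategy is to contradict this regularity by exhibiting structure in $G$.

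The heart of the argument is a \emph{decomposition theorem} for Berge graphs: every Berge graph is either \emph{basic} --- bipartite, the line graph of a bipartite graph, a double split graph, or the complement of one of these three --- or else admits one of a short list of structural decompositions: a proper $2$-join, a proper $2$-join in the complement, or a balanced skew partition. This is proved by a long case analysis organised around how certain small configurations sit inside $G$: whether $G$ contains a ``prism'' or a ``pyramid'' or only even long holes, and, in the prism-free and pyramid-free cases, how the arrangement of maximal cliques and of particular induced subgraphs forces either a skew partition or one of the joins, or else pins $G$ down to a basic graph. I expect \emph{this} step to be by far the main obstacle: each case demands its own delicate local analysis, and assembling the cases into a single clean dichotomy is the bulk of the work.

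It then remains to check that no minimally imperfect graph can be of any type produced by the decomposition theorem. Basic graphs are perfect: bipartite graphs and line graphs of bipartite graphs by K\"onig-type theorems, double split graphs by a direct verification, and taking complements preserves perfection by Lov\'asz's Perfect Graph Theorem. A minimally imperfect graph has no proper $2$-join (Cornu\'ejols--Cunningham), hence none in its complement either. Finally --- the other genuinely new input --- a minimally imperfect graph has no balanced skew partition, strengthening a conjecture of Chv\'atal; this is obtained by combining the structure of a balanced skew partition with the $\omega$-clique regularity of minimally imperfect graphs. Since a minimally imperfect Berge graph $G$ would have to be one of these four types, no such $G$ exists, so every Berge graph is perfect, which together with the easy direction gives the theorem.
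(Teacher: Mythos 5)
This theorem is not proved in the paper at all---it is quoted as Theorem~\ref{thm:SPGT} with a citation to \cite{SPGT}---and your proposal is an accurate high-level outline of exactly that cited proof: reduce to a minimally imperfect Berge graph, apply the decomposition theorem (basic classes versus proper $2$-joins, their complements, and balanced skew partitions), and verify that a minimum imperfect Berge graph can be of none of these types. So your approach coincides with the source the paper relies on; just be aware that as written it is a sketch rather than a proof, since the decomposition theorem and the exclusion of balanced skew partitions are precisely the (book-length) content of \cite{SPGT}, and nothing in the present paper supplies them.
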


A graph is \emph{strongly perfect} if every induced subgraph of it has a strong stable set. Strongly perfect graphs have been studied by several authors (\cite{BD}, \cite{Ravindra}, \cite{Wang}) and they form an interesting class of perfect graphs, as the complement of a strongly perfect graph is not necessarily strongly perfect, unlike a perfect graph. Although there are many results concerning strongly perfect graphs (see \cite{Ravindra3} for a summary), there is no elegant characterization of strongly perfect graphs in terms of forbidden subgraphs. In \cite{Ravindra2}, the author presents a conjecture to characterize strongly perfect graphs in terms of forbidden induced subgraphs (originally posed by Berge). In the same paper, a characterization of claw-free strongly perfect graphs by five infinite families of forbidden induced subgraphs was also conjectured, and this was proved in \cite{Wang}. The main result of the current paper is a new shorter and quite different proof of this characterization.

This paper is organized as follows. In Section \ref{sec:preliminaries}, we give further definitions and some known results about claw-free graphs. We also state the main theorem, Theorem \ref{thm:stronger}, and give a brief overview of the proof. In Section \ref{sec:lemmas}, we state some observations and prove a few lemmas that will be used later. Next, in Section \ref{sec:minimal_counter}, we prove several statements about the structure of a minimal counterexample to Theorem \ref{thm:stronger}. The proof of the main theorem occupies Section \ref{sec:proof}.

\section{Preliminaries} \label{sec:preliminaries}

We start with definitions of some graphs needed to state the main theorem. A \emph{prism} is a graph consisting of two vertex-disjoint triangles $\{a_1, a_2, a_3\}$, $\{b_1, b_2, b_3\}$, and three paths $P_1, P_2, P_3$, where each $P_i$ has endpoints $a_i, b_i$, and for $1 \leq i < j \leq 3$ the only edges between $V(P_i)$ and $V(P_j)$ are $a_ia_j$ and $b_ib_j$. Thus, the interiors of $P_1, P_2, P_3$ are pairwise anticomplete. A prism is \emph{odd} if the three paths $P_1, P_2, P_3$ are odd. A \emph{handcuff} is a graph formed by connecting two vertex-disjoint even cycles via an odd path whose endpoints form a triangle with the cycles. An \emph{eye mask} is a graph consisting of two vertex-disjoint even cycles and a clique of size four formed by making an edge of each cycle complete to each other (see Figure \ref{fig:handcuff_eyemask}, where the dotted lines represent odd paths).

\begin{figure}[h]
\begin{center}
\begin{tikzpicture}[scale=0.24]

\node[inner sep=2.5pt, fill=black, circle] at (-4, 4)(v1){}; 
\node[inner sep=2.5pt, fill=black, circle] at (-4, -4)(v2){}; 
\node[inner sep=2.5pt, fill=black, circle] at (-2, 2.6)(v3){}; 
\node[inner sep=2.5pt, fill=black, circle] at (-2, -2.6)(v4){};

\node[inner sep=2.5pt, fill=black, circle] at (0, 0)(v5){}; 
\node[inner sep=2.5pt, fill=black, circle] at (3, 0)(v6){}; 
\node[inner sep=2.5pt, fill=black, circle] at (6, 0)(v7){}; 
\node[inner sep=2.5pt, fill=black, circle] at (9, 0)(v8){};

\node[inner sep=2.5pt, fill=black, circle] at (11, 2.6)(v9){}; 
\node[inner sep=2.5pt, fill=black, circle] at (11, -2.6)(v10){}; 
\node[inner sep=2.5pt, fill=black, circle] at (13, 4)(v11){}; 
\node[inner sep=2.5pt, fill=black, circle] at (13, -4)(v12){}; 

\draw[black, dotted, thick] (v1)  .. controls +(-6,0) and +(-6,0) .. (v2);
\draw[black, thick] (v1) -- (v3);
\draw[black, thick] (v2) -- (v4);
\draw[black, thick] (v3) -- (v4);
\draw[black, thick] (v3) -- (v5);
\draw[black, thick] (v4) -- (v5);
\draw[black, thick] (v5) -- (v6);
\draw[black, dotted, thick] (v6) -- (v7);
\draw[black, thick] (v7) -- (v8);
\draw[black, thick] (v8) -- (v9);
\draw[black, thick] (v8) -- (v10);
\draw[black, thick] (v9) -- (v10);
\draw[black, thick] (v9) -- (v11);
\draw[black, thick] (v10) -- (v12);
\draw[black, dotted, thick] (v11)  .. controls +(6,0) and +(6,0) .. (v12);

\node at (-5,0) {even};
\node at (4.5,2) {$\xleftarrow{\makebox[0.45cm]{}}$ odd $\xrightarrow{\makebox[0.45cm]{}}$};
\node at (14,0) {even};

\end{tikzpicture}
\hspace{1.5cm}
\begin{tikzpicture}[scale=0.26]

\node[inner sep=2.5pt, fill=black, circle] at (-4, 3.7)(v1){}; 
\node[inner sep=2.5pt, fill=black, circle] at (-4, -3.7)(v2){}; 
\node[inner sep=2.5pt, fill=black, circle] at (-2, 2)(v3){}; 
\node[inner sep=2.5pt, fill=black, circle] at (-2, -2)(v4){};

\node[inner sep=2.5pt, fill=black, circle] at (2, 2)(v9){}; 
\node[inner sep=2.5pt, fill=black, circle] at (2, -2)(v10){}; 
\node[inner sep=2.5pt, fill=black, circle] at (4, 3.7)(v11){}; 
\node[inner sep=2.5pt, fill=black, circle] at (4, -3.7)(v12){}; 

\draw[black, dotted, thick] (v1)  .. controls +(-6,0) and +(-6,0) .. (v2);
\draw[black, thick] (v1) -- (v3);
\draw[black, thick] (v2) -- (v4);
\draw[black, thick] (v3) -- (v4);
\draw[black, thick] (v3) -- (v9);
\draw[black, thick] (v3) -- (v10);
\draw[black, thick] (v4) -- (v9);
\draw[black, thick] (v4) -- (v10);
\draw[black, thick] (v9) -- (v10);
\draw[black, thick] (v9) -- (v11);
\draw[black, thick] (v10) -- (v12);
\draw[black, dotted, thick] (v11)  .. controls +(6,0) and +(6,0) .. (v12);

\node at (-5,0) {even};
\node at (5.2,0) {even};

\end{tikzpicture}
\end{center}
\vspace{-0.35cm}
\caption{Handcuffs and eye masks (the dotted lines represent odd paths)}
\label{fig:handcuff_eyemask}
\end{figure}
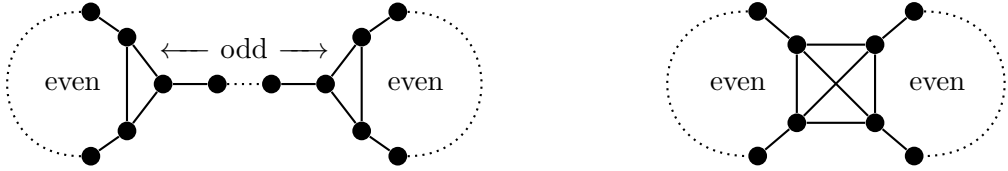

We say a graph is \emph{innocent} if it contains no odd holes, no antiholes of length at least six, no odd prisms, no handcuffs, and no eye masks. The following was proved by Wang in 2006.

\begin{theorem}\cite{Wang}
\label{thm:main}
Let $G$ be a claw-free graph. Then $G$ is strongly perfect if and only if $G$ is innocent.
\end{theorem}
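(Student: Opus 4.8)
Since strong perfection is inherited by induced subgraphs, it suffices to check that none of the five types of graph appearing in the definition of \emph{innocent} has a strong stable set. If $G$ is an odd hole, its maximal cliques are exactly its edges, so a strong stable set would be an independent set meeting every edge; then $V(G)$ is a union of two stable sets, contradicting that an odd cycle is not bipartite. If $G=C_n^C$ with $n\ge 6$, then for odd $n$ the graph $G$ is an odd antihole and so fails to be perfect by Theorem~\ref{thm:SPGT}, while for even $n$ the maximal cliques of $G$ are precisely the maximal stable sets of $C_n$, and for any vertex or edge $Q$ of $C_n$ (that is, for any clique of $G$) one can exhibit a maximal stable set of $C_n$ disjoint from $Q$, so no clique of $G$ — a fortiori no stable set of $G$ — meets all maximal cliques of $G$. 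Finally, if $G$ is an odd prism, a handcuff, or an eye mask, a short argument based on the parity of its even cycles and odd path(s) shows that a stable set meeting every edge of the cycles together with the triangles (respectively the $K_4$) would be forced to contain two adjacent vertices at a junction; again $G$ has no strong stable set. Hence every strongly perfect claw-free graph is innocent.

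\textbf{Sufficiency: set-up and reductions.} Suppose the converse fails and let $G$ be a counterexample with $|V(G)|$ minimum. As ``claw-free'' and ``innocent'' are hereditary, every proper induced subgraph of $G$ is strongly perfect while $G$ itself has no strong stable set; in particular $G$ is not a clique, and since $G$ has no odd hole or odd antihole it is perfect by Theorem~\ref{thm:SPGT}. It will be convenient to prove a formally stronger statement amenable to induction — for instance that every maximal consistent set of $G$ is strong, or that $G$ is perfectly contractile — but I describe the argument as if proving Theorem~\ref{thm:main} directly. Using the composition lemmas for strong perfection recorded in Section~\ref{sec:lemmas} (strong perfection is preserved by a disjoint union, by a join, by composition along a clique cutset, by attaching a simplicial clique, and by substitution into a homogeneous set), minimality forces: $G$ is connected and co-connected; $G$ has no clique cutset; $G$ has no simplicial clique; and $G$ has no homogeneous set $M$ with $1<|M|<|V(G)|$.

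\textbf{Ruling out even pairs.} The next step — the first delicate point — is to show that $G$ has no even pair $\{u,v\}$. One cannot simply contract $\{u,v\}$ and pull a strong stable set back, since a maximal clique of $G$ meeting $\{u,v\}$ in exactly one vertex need not remain maximal after contraction. The key additional fact is that, for an even pair $\{u,v\}$, every clique contained in $N(u)\cup N(v)$ is contained in $N(u)$ or in $N(v)$: otherwise a vertex of $N(u)\setminus N(v)$ and a vertex of $N(v)\setminus N(u)$ lying in that clique, together with $u$ and $v$, induce a path of length three, contradicting that $\{u,v\}$ is even. With this one controls exactly which cliques become non-maximal upon contraction, and a strong stable set of the contraction can be carried back to one of $G$ (choosing, when the merged vertex is used, whether to re-expand it to $u$ or to $v$); equivalently one argues directly with a maximal consistent set. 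Either way, minimality of $G$ then implies that $G$ has no even pair, so every two non-adjacent vertices of $G$ are joined by an odd induced path.

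\textbf{The structural core.} It remains to reach a contradiction from the following: $G$ is a perfect claw-free graph, connected and co-connected, not a clique, with no clique cutset, no simplicial clique and no nontrivial homogeneous set, in which no two non-adjacent vertices form an even pair. Here I would invoke the structure of claw-free graphs recalled in Section~\ref{sec:preliminaries}: such a $G$ decomposes, up to the operations already forbidden in it, into well-understood basic pieces — essentially line graphs of bipartite multigraphs and circular-interval graphs, together with a bounded list of small exceptional graphs. In each basic piece I would either exhibit two non-adjacent vertices between which every induced path is forced to be even (for example two edges of the bipartite multigraph sharing a vertex, in the absence of a ``short'' odd obstruction), contradicting the previous step, or, failing that, extract from the offending configuration one of the forbidden subgraphs — an odd hole, an antihole of length at least six, an odd prism, a handcuff, or an eye mask — contradicting innocence. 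Pushing this dichotomy through all the basic pieces, and checking that the admissible ways of gluing the pieces together cannot destroy an even pair without creating one of the forbidden subgraphs, is the heart of the argument and the step I expect to be by far the longest and most intricate; this is what Sections~\ref{sec:minimal_counter} and~\ref{sec:proof} should carry out.
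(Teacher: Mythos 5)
There is a genuine gap, and it sits at the very first reduction of your sufficiency argument. You assert that strong perfection is preserved by ``composition along a clique cutset'' and conclude that a minimum counterexample has no clique cutset. That composition lemma is false, and the forbidden graphs of this very theorem are the counterexamples: a handcuff whose connecting path has length one is obtained by gluing two clowns (each of which is claw-free, innocent, and easily seen to be strongly perfect) at a single cut vertex, yet the handcuff has no strong stable set. The whole difficulty of this theorem is precisely that strong stable sets of the two blocks of a clique-cutset decomposition need not be combinable -- whether they are depends on parities of paths to the cutset -- and this is why the paper does not prove Theorem \ref{thm:main} directly but the strengthened Theorem \ref{thm:stronger}, tracking a consistent set of safe vertices so that partial solutions can be glued (Lemma \ref{lem:heart_of_it_all}, and the parity arguments in Theorems \ref{thm:no_rich_1_join} and \ref{thm:no_1_join}). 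So ``minimality forces no clique cutset'' is unjustified as you state it; likewise your ``no simplicial clique'' reduction has no supporting composition lemma beyond the simplicial-vertex case.

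Even setting that aside, your ``structural core'' is an outline rather than a proof: you defer the entire case analysis (``I would invoke\ldots'', ``the step I expect to be by far the longest'') and you invoke a different, heavier structure theory than the paper actually needs. The paper deletes the simplicial vertices (which form a consistent set of safe vertices), applies the Chv\'atal--Sbihi and Maffray--Reed theorems (Theorems \ref{thm:Chvatal_Sbihi} and \ref{thm:Maffray_Reed}) to the resulting claw-free \emph{perfect} graph, and when a clique cutset appears converts it to an internal clique cutset of $G$ and applies Theorem \ref{thm:claw_free_internal_cc_proper}, ruling out each outcome (linear interval graphs, proper coherent $W$-joins, $1$-joins, line graphs of bipartite multigraphs and their smooth augmentations) for a minimal counterexample. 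Nowhere does it use even pairs or even-pair contraction; your even-pair step, besides being only sketched (the pull-back of a strong stable set through contraction is exactly the kind of step that needs a careful argument, as you yourself note), would still leave you with an unproved dichotomy over all basic pieces of the claw-free structure theorem. As it stands the proposal establishes necessity in outline but does not prove sufficiency.
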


The main result of this paper is a new shorter proof of Theorem \ref{thm:main}. It is easy to check that none of the forbidden induced subgraphs in an innocent graph has a strong stable set, and so they are not strongly perfect. Therefore, it is enough to prove the following:

\begin{theorem}
\label{thm:weaker}
Let $G$ be a claw-free innocent graph. Then $G$ has a strong stable set.
\end{theorem}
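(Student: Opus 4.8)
The plan is to prove a stronger statement, Theorem~\ref{thm:stronger}, by induction on $|V(G)|$, and to extract a contradiction from a minimal counterexample $G$. I would formulate the strengthening so that it records not merely the existence of a strong stable set but enough extra structure to be recombined across the separations that claw-free graphs admit: a natural choice is a strong stable set that is also a \emph{consistent} set (every pair an even pair), perhaps together with a prescription of how the set is permitted to meet one distinguished clique. The even-pair relation is precisely what controls how a strong stable set can cross a clique separator, and consistency passes to induced subgraphs, so this is the kind of invariant the induction should carry.

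The first part of the argument consists of reductions on the minimal counterexample $G$. The routine ones give that $G$ is connected and has no simplicial vertex (if $v$ were simplicial, the unique maximal clique through $v$ is $N[v]$, and a strong stable set of $G\setminus v$ either already meets $N(v)$ or can be enlarged by $v$, which also preserves the strengthened conclusion since all of $v$'s neighbours lie in the clique $N(v)$). The substantial reduction is that $G$ has no clique cutset: given a clique cutset $K$ splitting $G$ into $G_1$ and $G_2$, solve each $G_i$ inductively and paste; the delicacy is that a strong stable set meets $K$ in at most one vertex, so the two solutions must be compelled to agree on $K$, and this is exactly where the strengthened hypothesis (the distinguished-clique prescription, together with consistency) is used. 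A couple of further easy reductions of the same flavour leave $G$ connected with no clique cutset.

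With these in hand I would invoke the structure theory of claw-free graphs recalled in Section~\ref{sec:preliminaries}. Since $G$ is connected, claw-free, perfect (by Theorem~\ref{thm:SPGT}, as $G$ contains no odd hole and no odd antihole), and has no clique cutset, it lies in a short list of basic classes: essentially (augmentations of) line graphs of bipartite graphs, together with circular-interval-type graphs and a few other small families. The remaining conditions packaged into innocence --- no odd prisms, no handcuffs, no eye masks, besides no odd holes and no antiholes of length at least six --- are exactly the obstructions to strong perfection that can occur inside these classes, so ruling them out pins $G$ down. In the principal case $G$ is in essence a line graph $L(H)$ with $H$ bipartite (after dealing with the augmentations, for which the consistency part of the hypothesis is helpful), and a strong stable set of $G$ amounts to a matching of $H$ saturating every vertex that is the centre of a maximal clique of $L(H)$; innocence forces $H$ to have the structure making such a matching exist by a Hall- or K\"onig-type argument. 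The circular-interval and exceptional cases are settled by exhibiting a stable set explicitly.

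I expect the main obstacle to lie in two places. First, getting the precise form of Theorem~\ref{thm:stronger} right: it must be strong enough that the clique-cutset reduction recombines cleanly, yet still true --- the trace of a strong stable set on a clique is extremely rigid, so the bookkeeping about which vertex of a cutset clique the recombined set uses is really the heart of the inductive step. Second, the endgame over the basic claw-free classes: matching the forbidden-subgraph hypotheses of innocence against the exact matching condition on $H$, and verifying that no basic graph escapes without a strong stable set. I anticipate the line-graph case, and this last translation, to carry most of the weight of the proof.
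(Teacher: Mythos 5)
Your outline has the same broad shape as the paper's proof (strengthen the statement, take a minimal counterexample, reduce via claw-free perfect structure theory, finish in the line-graph case), but the step you yourself identify as the heart --- the exact form of the strengthening that makes the clique-cutset/join recombination go through --- is missing, and the guess you offer is not the one that works. Theorem~\ref{thm:stronger} does not ask for a strong stable set that is itself consistent; it asks for a strong stable set \emph{containing} a prescribed consistent set $Z$ of \emph{safe} vertices, where safety is a parity condition tying a simplicial vertex to every clown (even hole plus hat) in the graph. That notion is what powers Lemma~\ref{lem:heart_of_it_all}: a safe vertex can be traded for a pendant even path, a pendant odd hole, or a pendant clown without destroying claw-freeness or innocence, and solutions transfer back; these gadgets are exactly how the paper forces the block solutions of a $1$-join, a proper coherent $W$-join, or a linear interval graph to agree on the cut clique by parity (Theorems~\ref{thm:not_linear_interval}, \ref{thm:no_prop_coh_W}, \ref{thm:no_rich_1_join}, \ref{thm:no_1_join}). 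Your invariant (consistency of the output set plus a prescription on one distinguished clique) is not shown to be maintainable, and your simplicial-vertex reduction already conflicts with any prescribed-set strengthening: one cannot delete a simplicial vertex that the prescription forces into the set, and re-inserting it can break consistency. In the paper the minimal counterexample \emph{does} have simplicial vertices --- they are precisely the members of $Z$ (Theorem~\ref{thm:suspect_starters}) --- and the decomposition theorem is applied not to $G$ but to $G'=G\setminus Z$, which is why internal clique cutsets, Observation~\ref{obs:simplicial_internal} and Theorem~\ref{thm:claw_free_internal_cc_proper} enter at all.

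Two later steps would also fail as stated. First, a connected claw-free perfect graph with no clique cutset is, by Theorem~\ref{thm:Chvatal_Sbihi}, elementary or peculiar; ``circular-interval-type'' classes do not occur there (they belong to the general claw-free structure theorem, and such graphs are typically imperfect), and the peculiar case needs its own argument (Lemmas~\ref{lem:peculiar} and \ref{lem:peculiar_no_simp_clique}). Second, the line-graph endgame is not a Hall- or K\"onig-type argument: what is needed is a matching covering every vertex of degree at least two and containing the prescribed singular edges, in a bipartite graph with no theta and no bicycle subgraph, and the paper proves this (Theorem~\ref{thm:not_line_graph_of_bip}) by a further induction exploiting minimality and the exclusion of proper coherent $W$-joins, with the augmentation case (Theorem~\ref{thm:not_augm_line_graph_of_bip}) requiring in addition the square/$W$-join analysis of Lemma~\ref{lem:get_W_join}. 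So the proposal points in the right general direction but omits the ideas that actually carry the proof: the safe-vertex/clown parity strengthening, the gadget lemma enabling recombination, and the genuine arguments for the line-graph and augmentation cases.
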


In this paper, we prove the following stronger variant of Theorem \ref{thm:weaker} (safe vertices are to be defined later.) The proof will be presented in Section \ref{sec:proof}.

\begin{theorem}
\label{thm:stronger}
Let $G$ be a claw-free innocent graph and let $Z$ be a consistent set of safe vertices in $G$. Then $G$ has a strong stable set that contains $Z$.
\end{theorem}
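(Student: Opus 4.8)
The plan is to prove Theorem~\ref{thm:stronger} by induction on $|V(G)|$, taking a minimal counterexample $G$ together with a consistent set $Z$ of safe vertices witnessing the failure. The base case is trivial. For the inductive step, the natural strategy is to find a vertex $v$ (or a small, well-structured set) that can be added to $Z$ — or deleted from $G$ — in such a way that the inductive hypothesis applies to the smaller instance and the resulting strong stable set lifts back to one in $G$ containing $Z$. The cleanest mechanism for this is the standard "even pair contraction / even vertex" argument for strong stable sets: if $v$ is a \emph{safe} vertex that is even to every vertex of $Z$ (so that $Z \cup \{v\}$ is again consistent), and if $G$ remains claw-free and innocent after we commit to putting $v$ in the stable set — i.e.\ after deleting $v$ together with all of $N(v)$, or more carefully, after an appropriate local modification — then by minimality $G \setminus N[v]$ has a strong stable set $S$ containing $Z$, and $S \cup \{v\}$ should be strong in $G$ provided every maximal clique of $G$ meeting $N(v)$ is already hit. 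So the technical heart is: \textbf{(a)} show that in a claw-free innocent graph with a consistent set $Z$ of safe vertices, either $G$ already has an obvious strong stable set or there exists a safe vertex $v \notin Z$ forming an even pair with each $z \in Z$; and \textbf{(b)} show that committing to $v$ preserves innocence and claw-freeness.

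For step~(a), the structure theory of claw-free graphs (and the lemmas and observations promised in Sections~\ref{sec:lemmas} and \ref{sec:minimal_counter}) will be essential. I would expect the argument to exploit that a minimal counterexample is \emph{connected} and has no clique cutset, no simplicial vertex, no simplicial clique whose removal is harmless, and so on — these are the standard reductions that make the graph "irreducible." In a claw-free graph, the neighborhood of any vertex is a union of at most two cliques; this tightly constrains how maximal cliques through $N(v)$ look and is exactly what one needs to argue that a single safe vertex $v$ dominates all the "new" maximal cliques. The definition of \emph{safe} (not given in the excerpt) is presumably engineered so that safe vertices are abundant — e.g.\ every vertex lying on a specified type of structure, or every simplicial-ish vertex — and so that the even-pair condition with $Z$ can be arranged; I would lean heavily on whatever existence lemma the paper proves about safe vertices.

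**The main obstacle** I anticipate is step~(b): ensuring that the reduced graph is still innocent. Deleting $N[v]$ can create new holes, antiholes, prisms, handcuffs, or eye masks that were not induced in $G$ because some vertex of $N[v]$ had a chord into them. This is where claw-freeness must be combined with the specific list of five forbidden families — in a claw-free graph the ways a deleted vertex can "uncover" a long induced structure are limited, and one must check case by case that none of the five forbidden configurations appears after reduction. Equivalently, if a forbidden configuration $F$ appears in $G \setminus N[v]$, one shows $G$ already contained a forbidden configuration (possibly a different one from the same family, or an odd hole obtained by rerouting through $v$), contradicting innocence of $G$. A secondary subtlety is handling the case $Z = \emptyset$ versus $Z \neq \emptyset$ uniformly: when $Z$ is nonempty we are not free to delete an arbitrary even vertex, only one compatible with $Z$, so the existence argument in~(a) must be done relative to $Z$, most likely by first establishing that safe vertices in $Z$ are themselves even to a rich supply of other safe vertices.

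**Finally**, once a suitable $v$ is found, I would complete the induction: apply Theorem~\ref{thm:stronger} to $(G \setminus N[v],\, Z)$ if $v \notin Z$, or to $(G \setminus N[v],\, Z \setminus \{v\})$ if $v \in Z$, obtain a strong stable set $S \supseteq Z$ (resp.\ $\supseteq Z \setminus \{v\}$) in the smaller graph, and argue $S \cup \{v\}$ is a strong stable set of $G$ containing $Z$ — stability is immediate since $v$ is anticomplete to $V(G) \setminus N[v] \supseteq S$, and the strongness follows from the claw-free neighborhood analysis in~(a) showing every maximal clique of $G$ either survives in $G \setminus N[v]$ (hence is met by $S$) or contains $v$. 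This contradiction to the choice of $G$ finishes the proof.
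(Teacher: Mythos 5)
Your plan diverges from the paper's proof, and the two steps you do commit to concretely are the ones that fail. First, the reduction itself: you delete $N[v]$ for a safe vertex $v$, take a strong stable set $S$ of $G\setminus N[v]$ containing $Z$, and claim $S\cup\{v\}$ is strong because ``every maximal clique of $G$ either survives in $G\setminus N[v]$ or contains $v$.'' That dichotomy is false. Since $v$ is simplicial, a maximal clique $K$ meeting $N(v)$ with $v\notin K$ must contain vertices outside $N[v]$; such a $K$ is neither a clique of $G\setminus N[v]$ (so it need not be hit by $S$) nor does it contain $v$. Already in the path $v\d a\d b\d c$ the clique $\{a,b\}$ is missed if the smaller graph returns $\{c\}$. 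Forcing specified vertices into the strong stable set of a smaller graph is exactly the delicate point, and the paper handles it not by deleting neighborhoods but by attaching parity gadgets to simplicial vertices (Lemma \ref{lem:heart_of_it_all}) and by decomposing along clique cutsets, proper coherent $W$-joins and $1$-joins. Second, the existence of your pivot vertex is simply assumed: there is no ``existence lemma about safe vertices'' to lean on, and a claw-free innocent graph may have no safe (indeed no simplicial) vertex at all --- $C_6$, or the line graph of any harmless bipartite graph of minimum degree two. In those cases your induction has no move, and ``$G$ already has an obvious strong stable set'' is precisely the hard content: the paper gets it from Theorem \ref{thm:Chvatal_Sbihi} and Theorem \ref{thm:Maffray_Reed} applied to $G\setminus Z$ (which equals $G$ minus its simplicial vertices in a minimal counterexample), together with the case analysis of Sections \ref{sec:minimal_counter} and \ref{sec:proof}.

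A further sign that the real difficulties were not engaged: the obstacle you flag as central --- that deleting $N[v]$ could ``create'' new odd holes, antiholes, prisms, handcuffs or eye masks --- is vacuous. Innocence is defined by forbidden \emph{induced} subgraphs, so it is hereditary, and consistency and safety also persist under vertex deletion (Observation \ref{obs:consistent_safe}). The genuine work in this theorem lies in (i) proving that a minimal counterexample admits none of the structured decompositions (linear interval, proper coherent $W$-join, $1$-join, line graph of a bipartite multigraph, smooth augmentation thereof), each ruled out by a bespoke recombination argument powered by Lemma \ref{lem:heart_of_it_all}, and (ii) invoking the claw-free perfect structure theorem on $G\setminus Z$ to show one of those decompositions must occur. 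Your proposal does not supply substitutes for either ingredient.
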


The \emph{line graph} $L(H)$ of a graph $H$ is the graph whose vertices are the edges of $H$ and whose edges are the pairs of edges of $H$ that share a vertex. Since line graphs are claw-free and do not contain antiholes of length at least seven, the following is a weaker version of Theorem \ref{thm:main}. It was proved by Ravindra in 1984 and it is immediately implied by Theorem \ref{thm:main}.

\begin{theorem}\cite{Ravindra}
\label{thm:line_graphs}
Let $G$ be a line graph. Then $G$ is strongly perfect if and only if $G$ contains no odd holes, no odd prisms, no handcuffs, and no eye masks.
\end{theorem}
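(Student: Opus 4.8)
The plan is to deduce Theorem \ref{thm:line_graphs} directly from the main characterization Theorem \ref{thm:main}, treating it as a specialization to line graphs rather than proving it from scratch. The entire task reduces to reconciling the definition of \emph{innocent} with the shorter forbidden list stated in Theorem \ref{thm:line_graphs}, since line graphs automatically rule out some of the configurations appearing in the definition of innocence.

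First I would invoke the two structural facts about line graphs recorded just before the statement: every line graph is claw-free, and no line graph contains an antihole of length at least seven. Because $G$ is claw-free, Theorem \ref{thm:main} applies, so $G$ is strongly perfect if and only if $G$ is innocent, that is, if and only if $G$ contains no odd holes, no antiholes of length at least six, no odd prisms, no handcuffs, and no eye masks. Comparing this with the list in Theorem \ref{thm:line_graphs}, the only clause that must be accounted for is the one forbidding antiholes of length at least six.

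I would dispose of this clause in two pieces. For antiholes of length at least seven, the fact above says a line graph never contains one, so this part of the innocence condition holds vacuously for every line graph and can be dropped. For the antihole of length exactly six, I would observe that $C_6^C$ is isomorphic to the triangular prism: partitioning the six vertices into the two triangles $\{1,3,5\}$ and $\{2,4,6\}$ and noting that the remaining edges $14$, $25$, $36$ form a perfect matching between them exhibits $C_6^C$ as a prism whose three paths $P_1, P_2, P_3$ are each a single edge. A single edge is a path of length one, hence odd, so this prism is an odd prism in the sense of the paper. Consequently, forbidding odd prisms already forbids the six-antihole.

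Putting these together, for a line graph $G$ the clause ``no antiholes of length at least six'' is implied by the remaining clauses of innocence (length at least seven being impossible, length exactly six being an odd prism). Therefore ``$G$ is innocent'' and ``$G$ contains no odd holes, no odd prisms, no handcuffs, and no eye masks'' are equivalent conditions when $G$ is a line graph, and chaining this equivalence with Theorem \ref{thm:main} yields Theorem \ref{thm:line_graphs}. There is essentially no obstacle here: the only content requiring verification is the elementary isomorphism $C_6^C \cong$ triangular prism together with the observation that this prism counts as \emph{odd}; all the difficulty resides in Theorem \ref{thm:main}, which we are permitted to assume.
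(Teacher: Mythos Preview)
Your proposal is correct and matches the paper's own treatment: the paper does not give a separate proof but simply remarks that, since line graphs are claw-free and contain no antihole of length at least seven, Theorem~\ref{thm:line_graphs} is ``immediately implied by Theorem~\ref{thm:main}.'' Your observation that $C_6^C$ is the triangular prism (hence an odd prism) is exactly the missing piece needed to reconcile the two forbidden lists, and is the only thing the paper leaves implicit.
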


Before we present the proof of Theorem \ref{thm:stronger}, we need a result concerning different types of decompositions in claw-free graphs. We now give the definitions of these decompositions. 

A \emph{clique cutset} of a graph $G$ is a clique $X$ such that $G\setminus X$ is not connected. Suppose that there is a partition $(V_1, V_2, X)$ of $V(G)$ such that $X$ is a clique, and $|V_1|, |V_2| \geq 2$, and $V_1$ is anticomplete to $V_2$. Then we say that $X$ is an \emph{internal clique cutset}. Two adjacent vertices of a graph $G$ are called \emph{twins} if (apart from each other) they have the same neighbors in $G$, and if there are two such vertices, we say ``$G$ admits twins". 

Let $A, B$ be disjoint subsets of $V(G)$. The pair $(A, B)$ is called a \emph{homogeneous pair} in $G$ if $A, B$ are cliques, and for every vertex $v \in V(G) \setminus (A \cup B)$, $v$ is either complete or anticomplete to $A$, and either complete or anticomplete to $B$. Let $(A,B)$ be a homogeneous pair, such that $A$ is neither complete nor anticomplete to $B$. (Thus, at least one of $A, B$ has at least two members.) In these circumstances we call $(A, B)$ a \emph{$W$-join}. A $W$-join $(A, B)$ is \emph{coherent} if the set of all vertices in $V(G) \setminus (A \cup B)$ that are complete to $(A \cup B)$ is a clique. A $W$-join $(A, B)$ is \emph{proper} if every vertex of $A$ is mixed on $B$, and every vertex of $B$ is mixed on $A$. We say that a homogeneous pair $(A, B)$ is \emph{square-connected} if for every partition of $A$ (resp. $B$) into nonempty sets $A'$ and $A''$ (resp. $B'$ and $B''$), there is a square in $A \cup B$ intersecting both $A'$ and $A''$ (resp. $B'$ and $B''$). It follows that if $(A, B)$ is square-connected, then every vertex of $A \cup B$ is in a square.

Suppose that $V_1, V_2$ is a partition of $V(G)$ such that $V_1, V_2$ are non-empty and $V_1$ is anticomplete to $V_2$. We call the pair $(V_1, V_2)$ a \emph{$0$-join} in $G$.
Next, suppose that $V_1, V_2$ is a partition $V(G)$, and for $i = 1, 2$ there is a subset $A_i \subseteq V_i$ such that:
\begin{itemize}
\itemsep0em
\item $A_1 \cup A_2$ is a clique, and for $i=1,2$, $A_i, V_i \setminus A_i$ are both non-empty,
\item $V_1 \setminus A_1$ is anticomplete to $V_2$ , and $V_2 \setminus A_2$ is anticomplete to $V_1$.
\end{itemize}
In these circumstances we call $(V_1, V_2)$ a \emph{$1$-join}. If also $|V_1|, |V_2| > 2$, then $(V_1, V_2)$ is a \emph{rich} $1$-join.

We say that a graph $G$ is a \emph{linear interval graph} if the vertices of $G$ can be numbered $v_1, \dots, v_n$ such that for all $i, j$ with $1 \leq i < j \leq n$, if $v_i$ is adjacent to $v_j$ then $\{v_i, v_{i+1}, \dots, v_{j}\}$ is a clique. Equivalently, a linear interval graph is a graph $G$ with a linear interval representation, which is a point on the real line for each vertex and a set of intervals, such that vertices $u$ and $v$ are adjacent in $G$ if and only if there is an interval containing both corresponding points on the real line. Observe that linear interval graphs do not contain holes. We need the following result:

\begin{theorem}\cite{Claw3}
\label{thm:claw_free_internal_cc_proper}
Let $G$ be a claw-free graph with an internal clique cutset such that $G$ does not admit twins, a $0$-join, or a $1$-join. Then every hole in $G$ has length four; if there is a $C_4$, then $G$ admits a coherent proper $W$-join, and otherwise $G$ is a linear interval graph.
\end{theorem}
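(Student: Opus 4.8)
The plan is to use the fact that an \emph{internal} clique cutset is very restrictive --- in the partition $(V_1,V_2,X)$ the set $X$ is a clique, $V_1$ is anticomplete to $V_2$, and both $V_1$ and $V_2$ have at least two vertices --- together with claw-freeness, to determine the structure of $G$ on each side of $X$ and then glue the sides along $X$. The first point is that holes do not cross the cutset: any hole $H$ meets the clique $X$ in at most two vertices, and if in exactly two then they are consecutive on $H$, so $H$ minus $V(H)\cap X$ is a path (or all of $H$) and hence connected; since $V_1$ is anticomplete to $V_2$, it follows that $V(H)\subseteq V_1\cup X$ or $V(H)\subseteq V_2\cup X$, and the same applies to any $C_4$. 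By symmetry it suffices to study $G_1 := G\,|\,(V_1\cup X)$. Since $G$ has no $0$-join, the sets $T_1 = N(V_1)\cap X$ and $T_2 = N(V_2)\cap X$ are nonempty (and are cliques, being subsets of $X$).

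To show that there are no long holes, suppose $H\subseteq V_1\cup X$ has length at least five. The main lever is a \emph{claw centred at a vertex of $X$}: if $H$ uses a vertex $x\in X\cap T_2$, choose $v_2\in V_2$ adjacent to $x$; then $v_2$ is anticomplete to $V_1$, at least one of the one-or-two neighbours of $x$ on $H$ lies in $V_1$, and (using that non-consecutive vertices of a hole of length $\ge 5$ are non-adjacent) a short case check produces three pairwise non-adjacent neighbours of $x$ --- a claw. Driving this argument, and using $|V_2|\ge2$, the clique $T_2$, and the absence of a $1$-join to settle the residual configurations (where $H$ avoids $T_2$, or clings to it through both of the vertices of $X$ that $V_2$ attaches to), forces $H$ to be anticomplete to $X$, so $H\subseteq V_1$ and $H$ is anticomplete to $X\cup V_2$. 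From here I would obtain a contradiction by passing to a smaller instance --- e.g.\ replacing $V_2$ by a minimal subset whose closed neighbourhood in $X$ still meets every vertex of $T_2$, and keeping at least two vertices so that $X$ remains an internal clique cutset: the new graph is claw-free, has the same holes, and --- this must be verified, but follows because $V_2$ attaches to the rest of $G$ only through the clique $X$ --- inherits ``no twins, no $0$-join, no $1$-join,'' so by induction on $|V(G)|$ it has no long hole, contradicting the survival of $H$. Hence every hole of $G$ has length four.

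If $G$ has no $C_4$ then, by the previous paragraph, $G$ has no hole at all, i.e.\ $G$ is chordal. A chordal claw-free graph with an internal clique cutset and no twins, $0$-join or $1$-join is a linear interval graph: claw-freeness forces, on each side, the neighbourhoods in $V_i$ of the vertices of $T_i$ to be linearly nested, and the absence of the three degenerate decompositions rules out exactly the obstructions to laying $V_1$, $X$, $V_2$ out consecutively on a line (a second anticomplete piece is barred by the absence of a $0$-join, a piece attaching through a proper subclique of a boundary clique by the absence of a $1$-join, two indistinguishable vertices by the absence of twins); gluing the two nested sides along $X$ gives a linear interval representation of $G$. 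If instead $G$ contains a $C_4$, say on $a_1,b_1,a_2,b_2$ in this cyclic order (so $\{a_1,a_2\}$ and $\{b_1,b_2\}$ are its two non-edges), lying in one side, then claw-freeness already forces strong uniformity around it --- for instance every neighbour of $a_1$ is adjacent to $b_1$ or to $b_2$, else there is a claw at $a_1$. The plan is to organise these consequences, together with the clique-cutset structure, into an actual homogeneous pair $(A,B)$ with $\{a_1,a_2\}\subseteq A$ and $\{b_1,b_2\}\subseteq B$ and $A,B$ cliques, $A$ neither complete nor anticomplete to $B$; this is a $W$-join, and a standard cleanup (deleting from $A$ the vertices complete or anticomplete to $B$ and symmetrically, and handling the vertices complete to $A\cup B$ --- a non-edge among them creates an induced $C_6^C$ that one then exploits) yields a \emph{coherent proper} $W$-join.

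The real difficulties are two. First, the no-long-holes argument: the ``claw centred at a vertex of $X$'' trick handles the main case, but the configurations in which the hole avoids $T_2$ or uses exactly the vertices of $X$ to which $V_2$ attaches, and the subsequent passage to a smaller instance, need a careful case analysis, and each reduction has to be re-verified against ``no twins, no $0$-join, no $1$-join'' --- clique cutsets interact with $W$-joins and homogeneous pairs in delicate ways. Second, in the $C_4$ case, turning the local uniformity consequences of claw-freeness into a genuine homogeneous pair (rather than a pile of constraints), and then cleaning the resulting $W$-join up to a proper coherent one, is where the work of that part lies. Everything else --- in particular the hole-localization of the first paragraph --- is short, and it is the hole-localization together with the ``claw at a cutset vertex'' lever that carries the proof.
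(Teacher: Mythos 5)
First, note that the paper does not prove this statement at all: it is quoted verbatim from Chudnovsky and Seymour's ``Claw-free graphs. III'' \cite{Claw3}, so there is no in-paper proof to compare yours against. Judged on its own terms, your proposal is an outline rather than a proof, and the places you defer are exactly where the content of the theorem lives. In the no-long-holes step, the claw centred at $x\in X$ only works when $x$ has a neighbour in $V_2$ \emph{and} both hole-neighbours of $x$ lie in $V_1$; the residual configurations (a hole-neighbour of $x$ inside $X$, or $x\notin T_2$) are dispatched by ``the absence of a $1$-join to settle the residual configurations,'' with no argument, and it is not even clear that the conclusion you aim for ($H$ anticomplete to $X$) follows. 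Worse, the reduction you then invoke --- shrink $V_2$ to a minimal subset and apply induction --- is unjustified: deleting vertices of $V_2$ can easily create twins, a $0$-join, or a $1$-join in the smaller graph (two surviving $V_2$-vertices with the same attachment in $X$ become twins; a $V_2$-vertex whose only remaining neighbours lie in $X$ tends to produce a $1$-join), and your parenthetical ``this must be verified, but follows because $V_2$ attaches only through $X$'' is precisely the claim that fails. Since the whole point of the hypotheses is that the absence of these decompositions rigidifies the graph, an induction that does not preserve them has no base to stand on.

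The second half has the same character. That a square forces a coherent proper $W$-join is genuinely delicate: one must first show that no vertex outside the two cliques is mixed on the relevant pairs (compare Lemma~\ref{lem:get_W_join} of this paper, which \emph{assumes} that non-mixing hypothesis and still needs a careful square-connected growth argument); your ``organise these consequences into an actual homogeneous pair'' and ``standard cleanup'' skip exactly this. Likewise, in the hole-free case, ``claw-free and chordal'' is far from ``linear interval'' --- the net is claw-free and chordal but is not an interval graph, so the conclusion really does hinge on a precise use of the no-twins/no-$0$-join/no-$1$-join hypotheses, and ``rules out exactly the obstructions to laying $V_1$, $X$, $V_2$ out on a line'' does not identify those obstructions or show they are the only ones. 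So while your hole-localization observation (every hole lies in $V_1\cup X$ or $V_2\cup X$) is correct and a reasonable first move, the proposal as written does not establish the theorem; it is a plan whose three hard steps are each left open.
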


We also need a result concerning claw-free perfect graphs. We continue with definitions from \cite{Maffray_Reed} in order to state this result. A \emph{cobipartite} graph $(X, Y)$ is the complement of a bipartite graph, where $X, Y$ is a partition of its vertex set into two cliques. A graph is called \emph{peculiar} if it can be obtained as follows: start with three, pairwise vertex-disjoint, cobipartite graphs $(A_1, B_2), (A_2, B_3), (A_3, B_1)$ such that none of them is a complete graph; add all edges between every two of them; then take three cliques $K_1, K_2, K_3$ that are pairwise disjoint and disjoint from the $A_i$'s and $B_i$'s; add all the edges between $K_i$ and $A_j \cup B_j$ for $j \neq i$; there is no other edge in the graph. A graph is called \emph{elementary} if its edges can be bicolored in such a way that every $P_3$ has its two edges colored differently.

An edge is \emph{flat} if it does not lie in a triangle. Let $xy$ be a flat edge of a graph $G=(V, E)$, and $B=(X,Y)$ be a connected cobipartite graph disjoint from $G$. We can build a new graph $G'$ obtained from $G \setminus \{x, y\}$ and $B$ by adding all possible edges between $X$ and $N(x) \setminus y$ and between $Y$ and $N(y) \setminus x$. We say that $G$ is \emph{augmented} along $xy$, that $xy$ is augmented, and $B$ will be called the {\em augment} of $xy$. When we augment with a cobipartite graph $(X, Y)$ a flat edge $xy$ in a graph $G$, it is easy to see that $X, Y$ is a homogeneous pair of the resulting graph. Moreover, since $xy$ is a flat edge, the vertices $x$ and $y$ have no common neighbor in $G$, and so in the resulting graph $G'$ the set of vertices in $V(G') \setminus (X \cup Y)$ that are complete to $(X \cup Y)$ is empty. This implies that $(X,Y)$ is a coherent homogeneous pair.

Now, let $x_1y_1, \dots, x_hy_h$ be $h$ pairwise disjoint flat edges of $G$. Let $(X_1,Y_1), \dots, (X_h, Y_h)$ be $h$ pairwise disjoint cobipartite graphs that are also disjoint from $G$. We can obtain a graph $G'$ by augmenting respectively each edge $x_iy_i$ with the augment $(X_i,Y_i)$. This graph is independent of the order of the augments. The graph $G'$ will be called an \emph{augmentation} of $G$.

In 1988, Chv\'atal and Sbihi proved a decomposition theorem for claw-free perfect graphs. They showed that claw-free perfect graphs either have a clique cutset or belong to two basic classes of graphs: elementary and peculiar graphs. As given above, the structure of peculiar graphs is determined precisely by their definition, but that is not the case for elementary graphs. Later, Maffray and Reed proved that an elementary graph is an augmentation of the line graph of a bipartite multigraph, giving a precise description of all elementary graphs.

\begin{theorem}\cite{Chvatal_Sbihi}
\label{thm:Chvatal_Sbihi}
A claw-free graph is perfect if and only if it is elementary, or is peculiar, or admits a clique cutset.
\end{theorem}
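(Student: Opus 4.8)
The plan is to prove the substantive content of this theorem, namely the ``only if'' direction, for which it suffices to show that \emph{a claw-free perfect graph with no clique cutset is elementary or peculiar}. Indeed, that elementary and peculiar graphs are perfect is a direct verification from their definitions (for a peculiar graph one exhibits, in each induced subgraph, an explicit optimal colouring built from the three cobipartite blocks and the cliques $K_1, K_2, K_3$), and this, together with the classical fact that a graph with a clique cutset is perfect if and only if each of its two sides is, handles the remaining implications. So one assumes from now on that $G$ is claw-free, perfect, and has no clique cutset, and must show $G$ is elementary or peculiar.

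The first step is to record the local structure of $G$. For every vertex $v$, the graph $G[N(v)]$ is an induced subgraph of $G$, hence perfect, and it has no stable set of size three since $G$ is claw-free; its complement is then perfect as well (the complement of a perfect graph is perfect) and triangle-free, hence bipartite, so $N(v)$ is the union of two cliques. This ``union of two cliques'' property, invoked at every vertex, is what sharply constrains the case analysis that follows.

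Next comes the dichotomy. Form the auxiliary graph $\mathcal{K}$ on the edge set of $G$, joining two edges of $G$ in $\mathcal{K}$ whenever they are the two edges of some induced $P_3$ of $G$; by definition $G$ is elementary exactly when $\mathcal{K}$ is bipartite. If $\mathcal{K}$ has no odd cycle, then $G$ is elementary and we are done. Otherwise, pick an odd cycle of $\mathcal{K}$ of smallest length. Using claw-freeness, the union-of-two-cliques structure of neighbourhoods, and the fact that $G$ (being perfect, by Theorem~\ref{thm:SPGT}) has no odd hole and no odd antihole, one argues that this minimal conflict cycle is short and is contained in a small induced subgraph $H_0$ of $G$ that already carries a ``peculiar skeleton'' — three pairwise completely joined non-complete cobipartite pieces. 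Finally one runs a propagation argument that uses the absence of a clique cutset in an essential way: starting from $H_0$, each remaining vertex of $G$ must attach to the skeleton constructed so far in one of a bounded number of ways, since by claw-freeness and the union-of-two-cliques property any other attachment would create a claw, an odd hole, an odd antihole, or would expose a clique cutset of $G$ — all impossible. Iterating this exhausts $V(G)$ and exhibits $G$ as a peculiar graph.

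The main obstacle I anticipate is the non-elementary case, and within it the last propagation step: upgrading a local peculiar configuration to a global one. This requires a careful classification of the possible attachment types of an arbitrary vertex to the partially built peculiar structure, followed by a type-by-type check that every attachment not matching the peculiar pattern reintroduces a claw or an odd hole or odd antihole or reveals a clique cutset; the hypothesis that $G$ has no clique cutset is exactly what rules out the last option and forces the skeleton to grow until it covers all of $G$. Making that growth argument airtight, rather than identifying the skeleton in the first place, is where I expect the bulk of the effort to go.
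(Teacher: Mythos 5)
This statement is not proved in the paper at all: it is quoted as an external result of Chv\'atal and Sbihi (Theorem~\ref{thm:Chvatal_Sbihi}, cited from \cite{Chvatal_Sbihi}), so there is no internal proof to compare against, and your attempt has to stand on its own as a proof of a deep structure theorem. As it stands it does not: it is a plan rather than a proof. The preliminary observations are fine (the reduction to the ``only if'' direction; the fact that in a claw-free perfect graph each neighbourhood is the union of two cliques, via the complement being triangle-free and perfect; the reformulation that $G$ is elementary exactly when the auxiliary conflict graph $\mathcal{K}$ on $E(G)$ is bipartite). But everything after ``Otherwise, pick an odd cycle of $\mathcal{K}$ of smallest length'' is asserted, not argued. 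The two claims that carry the entire theorem --- (i) that a minimum-length odd cycle of $\mathcal{K}$ is short and sits inside a small induced subgraph already containing a ``peculiar skeleton'', and (ii) that, in the absence of a clique cutset, every further vertex attaches to this skeleton in one of finitely many ways compatible with the peculiar structure, so that the skeleton propagates to all of $G$ --- are precisely the content of the Chv\'atal--Sbihi theorem, and you give no argument for either beyond saying that claw-freeness, the two-cliques property, and Theorem~\ref{thm:SPGT} ``rule out'' bad configurations. In the original paper these steps occupy a long and delicate case analysis (classifying how edges of the odd conflict cycle interact, extracting the three pairwise completely joined non-complete cobipartite pieces, and checking every attachment type); none of that case analysis is reproduced or even outlined concretely here, and your own closing paragraph concedes this.

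Two smaller points. First, the ``if'' direction is not quite as immediate as you suggest: perfection of elementary graphs is a genuine (if short) lemma --- it is usually derived from the bicolouring property or from the Maffray--Reed description (Theorem~\ref{thm:Maffray_Reed}) --- and perfection of peculiar graphs likewise needs an actual verification, not just ``an explicit optimal colouring'' waved at; though for the purposes of this paper only the ``only if'' direction is ever used. Second, invoking Theorem~\ref{thm:SPGT} to say a perfect graph has no odd hole or antihole is historically anachronistic for a 1988 result and is also much heavier machinery than needed; Chv\'atal and Sbihi work only with the hypothesis that $G$ is Berge/perfect in the ways their argument requires. Neither of these is fatal, but the missing case analysis in (i) and (ii) is: without it the proposal is an outline of the known strategy, not a proof.
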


\begin{theorem}\cite{Maffray_Reed}
\label{thm:Maffray_Reed}
A graph is elementary if and only if it is an augmentation of the line graph of a bipartite multigraph.
\end{theorem}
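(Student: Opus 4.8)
The plan is to prove the two directions separately; the ``if'' direction is a pair of colouring constructions, and the real difficulty is the ``only if'' direction. For ``if'': first, $L(H)$ is elementary for every bipartite multigraph $H$ with parts $U,W$, by colouring an edge $\{g,h\}$ of $L(H)$ --- a pair of edges of $H$ meeting at a vertex --- red if that vertex is in $U$ and blue if it is in $W$; an induced $P_3$ of $L(H)$ is a triple $g,h,k$ of edges of $H$ with $g\cap k=\emptyset$ and $g\cap h,h\cap k\ne\emptyset$, so since $g\cap k=\emptyset$ the two meeting-points are the two distinct ends of $h$ and thus lie on opposite sides, and the $P_3$ gets both colours. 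Second, any augmentation $G'$ of such an $L(H)$ is elementary: I would extend this colouring by giving each vertex of $G'$ a $U$-anchor and a $W$-anchor --- an unaugmented edge $pq$ of $H$ keeps $p,q$; the clique $X_i$ (resp.\ $Y_i$) replacing the flat edge of $L(H)$ at a degree-$2$ vertex $v_i$ of $H$, say the edge $v_ia_i$ (resp.\ $v_ib_i$), inherits $v_i,a_i$ (resp.\ $v_i,b_i$) --- and colouring an edge of $G'$ red or blue according to which anchor its ends share, resolving the edges whose ends share both anchors (those inside some $X_i$ or $Y_i$, and those between parallel edges of $H$) by the colour of $a_i$ resp.\ $b_i$, resp.\ arbitrarily. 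On each cobipartite augment $(X_i,Y_i)$ this restricts to a good colouring of that cobipartite graph in which both sides of the bipartition are monochromatic and the cross-edges get the other colour --- which exists for any cobipartite graph --- and the verification that every induced $P_3$ of $G'$ is bichromatic is a short case analysis on how it meets the sets $X_i\cup Y_i$, using that the two anchors of the middle edge of a $P_3$ are distinct.

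For ``only if'', fix a good $2$-colouring of the elementary graph $G$. The local structure is immediate: two neighbours of a vertex $v$ joined to $v$ by edges of the same colour must be adjacent (else they form a monochromatic induced $P_3$ through $v$), so $N(v)$ splits into two cliques $R_v\sqcup B_v$. I would then run two phases. In the first, I peel off augments: while $G$ has a \emph{cobipartite homogeneous pair} --- disjoint cliques $X,Y$ with $|X\cup Y|\ge 3$ such that $G[X\cup Y]$ is a connected cobipartite graph, $(X,Y)$ is homogeneous, and no vertex of $G$ is complete to $X\cup Y$ --- I replace $X\cup Y$ by a single (necessarily flat) edge $xy$, with $x$ complete to $N(X)\setminus Y$ and $y$ complete to $N(Y)\setminus X$, checking that the smaller graph is again elementary and has $G$ as its augmentation along $xy$; this terminates in an elementary graph $G^\ast$ with no such pair, of which $G$ is an augmentation. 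In the second phase I show $G^\ast\cong L(H^\ast)$ for a bipartite multigraph $H^\ast$. The key point is to produce a good $2$-colouring of $G^\ast$ in which every maximal clique is monochromatic; granting this, each vertex lies in at most one red and at most one blue maximal clique, so the maximal cliques of $G^\ast$ partition $E(G^\ast)$ into cliques with every vertex in at most two of them --- a Krausz partition --- and taking the red cliques as one part of $H^\ast$, the blue cliques as the other, and each vertex of $G^\ast$ as the edge of $H^\ast$ joining the two cliques that contain it (supplying a fresh degree-one vertex when one of them is missing) gives $L(H^\ast)\cong G^\ast$. Combined with the first phase, $G$ is an augmentation of $L(H^\ast)$.

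The main obstacle is the step in the second phase that upgrades the purely local decompositions $N(v)=R_v\sqcup B_v$ into a single good colouring of $G^\ast$ with every maximal clique monochromatic. The absence of a cobipartite homogeneous pair in $G^\ast$ is exactly what makes this possible: the gem (a $P_4$ plus a dominating vertex) is elementary but has no good colouring making all three of its triangles monochromatic, and this is no accident --- the gem is the augmentation of $K_2=L(P_3)$ along a cobipartite graph. So the argument must exploit the non-existence of such pairs in $G^\ast$, together with the freedom to recolour edges lying in no induced $P_3$, to force the red and blue cliques of adjacent vertices to mesh along shared cliques and to rule out the remaining obstructions --- which I expect to reduce to a careful local analysis of how $R_u\cup B_u$ overlaps $R_v\cup B_v$ across an edge $uv$. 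The bookkeeping in the first phase --- that a cobipartite homogeneous pair can be un-augmented without disturbing the others and that elementariness survives un-augmentation --- is also delicate, but I expect it to be considerably more routine.
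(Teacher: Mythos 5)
You should first note a point of calibration: the paper does not prove this statement at all. Theorem \ref{thm:Maffray_Reed} is quoted from Maffray and Reed \cite{Maffray_Reed} and used as a black box, so there is no internal proof to compare against; your proposal has to stand on its own as a proof of a result whose hard direction occupies essentially an entire long paper.

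Your ``if'' direction is fine: colouring an edge of $L(H)$ by the side of the shared endpoint, and extending across the augments (cross-edges of $(X_i,Y_i)$ coloured by the side of the degree-two vertex $v_i$, edges inside $X_i$ or inside $Y_i$ by the opposite colour, edges to $N(x)\setminus\{y\}$ and $N(y)\setminus\{x\}$ keeping their old colour) does give a good colouring, and the case analysis really is short. The genuine gap is in the ``only if'' direction, and it is exactly where you locate it: the Phase~2 claim that an elementary graph with no cobipartite homogeneous pair admits a good colouring in which every maximal clique is monochromatic. Granting that claim, your Krausz-partition argument does work (two same-coloured maximal cliques at a vertex would yield a monochromatic induced $P_3$, so each vertex lies in at most one clique of each colour and each edge in exactly one clique, and the two-colouring of the cliques gives bipartiteness of $H^\ast$). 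But the claim itself is not something a ``careful local analysis'' of $N(u)=R_u\sqcup B_u$ versus $N(v)=R_v\sqcup B_v$ can be expected to deliver: as your own gem example shows, the local data is compatible with failure, and what is needed is a global structural argument that a pair-free elementary graph has the clique hypergraph of a bipartite line graph --- this is essentially the entire content of the Maffray--Reed theorem, established there through a long sequence of structural lemmas, and it is missing here. Phase~1 also leaves real bookkeeping unverified (that contracting a cobipartite homogeneous pair to an edge preserves elementariness; that the created edge is flat and remains flat, and the contracted pairs remain pairwise disjoint, through later contractions, so that $G$ is an augmentation of $G^\ast$ in the simultaneous sense required by the definition); I agree this part is more routine, but it still needs to be done. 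As it stands, the proposal is a sensible reduction of the theorem to its hard core, not a proof of it.
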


We prove Theorem \ref{thm:main} (and in fact Theorem \ref{thm:stronger}) in Section \ref{sec:proof}, but let us sketch the proof here. We will also explain the step where the stronger form of Theorem \ref{thm:stronger}  is needed. Our approach is to apply known structure theorems to $G$, and then deal with each of the possible outcomes separately. While doing that, it is important to obtain more structural information than just ``$G$ has a simplicial vertex''. So our first step is to construct a graph $G'$ that is obtained from $G$ by deleting the set of all simplicial vertices (which turns out to be a consistent set of safe vertices). Being an induced subgraph of $G$, the graph $G'$ is claw-free and perfect. Therefore, By Theorem \ref{thm:Chvatal_Sbihi}, $G'$ either is peculiar, or is elementary, or admits a clique cutset. We deal with the cases where $G'$ is peculiar or $G'$ is elementary directly. If $G'$ admits a clique cutset, the fact that $G'$ is obtained from $G$ by deleting all simplicial vertices allows us to show that $G$ admits an internal clique cutset. Hence, by Theorem \ref{thm:claw_free_internal_cc_proper}, $G$ admits one of several highly structured decompositions. We then decompose $G$ and show that the partial solutions in the blocks of the decompositions can be combined to get the required strong stable set in $G$ (the step of combining the partial solutions is where the full strength of Theorem \ref{thm:stronger} is necessary).

\section{Some lemmas}\label{sec:lemmas}

In this section we state some observations and prove a few lemmas that we will use later.

\begin{observation}
\label{obs:simplicial}
If $v$ is a simplicial vertex in a graph $G$, and $S$ is a strong stable set of $G\setminus \{v\}$, then either $S$ or $S \cup \{v\}$ is a strong stable set of $G$.
\end{observation}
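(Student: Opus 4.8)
The plan is to analyze how a strong stable set of $G \setminus \{v\}$ interacts with the maximal cliques of $G$, exploiting the fact that $N(v)$ is a clique. First I would observe that since $v$ is simplicial, $N[v] = N(v) \cup \{v\}$ is a clique of $G$; let $K$ be a maximal clique of $G$ containing $N[v]$. The maximal cliques of $G$ fall into two types: those that contain $v$ (these are precisely the maximal cliques of $G$ contained in $N[v]$, and in fact $N[v]$ itself is the unique maximal clique containing $v$, since any clique containing $v$ is a subset of $N[v]$), and those that do not contain $v$. I would check that a maximal clique $C$ of $G$ with $v \notin C$ is also a maximal clique of $G \setminus \{v\}$: it is certainly a clique of $G \setminus \{v\}$, and if it were extendable by some $w \in V(G) \setminus \{v\}$ then it would already have been extendable in $G$, contradicting maximality; conversely every maximal clique of $G \setminus\{v\}$ either is maximal in $G$ or becomes maximal upon adding $v$, which forces it to be $N(v)$.

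Next I would case on whether $S$ meets $N(v)$. Since $S$ is a strong stable set of $G \setminus \{v\}$ and every maximal clique of $G$ not containing $v$ is a maximal clique of $G \setminus \{v\}$, the set $S$ already meets all such cliques of $G$. The only maximal clique of $G$ possibly not met by $S$ is the unique one containing $v$, namely $N[v]$. If $S \cap N(v) \neq \emptyset$, then $S$ meets $N[v]$ as well, so $S$ itself is a strong stable set of $G$ (it is stable in $G$ since $v \notin S$). If $S \cap N(v) = \emptyset$, then $S \cup \{v\}$ is still stable (as $v$ has no neighbor in $S$), and it meets $N[v]$ via $v$; it continues to meet every other maximal clique $C$ of $G$ because $S$ did and $S \subseteq S \cup \{v\}$. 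In either case we obtain a strong stable set of $G$ that is either $S$ or $S \cup \{v\}$, as claimed.

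The only mildly delicate point — and the one I would state carefully — is the claim that $N(v)$ is contained in a \emph{unique} maximal clique of $G$, equivalently that the maximal cliques of $G$ not containing $v$ are exactly the maximal cliques of $G \setminus \{v\}$ other than possibly $N(v)$ itself. This is where simpliciality is used: because $N(v)$ is a clique, every clique of $G$ through $v$ lies inside $N[v]$, so $N[v]$ is the only maximal clique meeting $\{v\}$. Once this bookkeeping is pinned down, the rest is immediate and requires no computation.
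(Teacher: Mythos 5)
Your proof is correct. The paper states this observation without proof, treating it as routine, and your argument -- that $N[v]$ is the unique maximal clique of $G$ containing $v$, that every maximal clique of $G$ not containing $v$ remains maximal in $G\setminus\{v\}$ and is therefore met by $S$, and the two-case split on whether $S$ meets $N(v)$ -- is exactly the standard bookkeeping that justifies it.
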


\begin{observation}
\label{obs:internal}
If $G$ is a graph with no simplicial vertices, then every clique cutset of $G$ is an internal clique cutset.
\end{observation}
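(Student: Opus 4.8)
The plan is to take an arbitrary clique cutset $X$ of $G$ and produce the partition $(V_1, V_2, X)$ demanded by the definition of an internal clique cutset. Since $X$ is a clique cutset, $G \setminus X$ is disconnected; I would write $C_1, \dots, C_k$, with $k \geq 2$, for its connected components.

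The one step that actually uses the hypothesis is the observation that no $C_i$ consists of a single vertex. Indeed, if $C_i = \{v\}$, then $v$ has no neighbor in any other component of $G \setminus X$ and none inside $C_i$, so $N(v) \subseteq X$; as $X$ is a clique and subsets of cliques are cliques, $N(v)$ is a clique, making $v$ simplicial, contrary to assumption. Hence $|C_i| \geq 2$ for all $i$.

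It then remains to set $V_1 = C_1$ and $V_2 = C_2 \cup \dots \cup C_k$: this is a partition of $V(G) \setminus X$ into two parts, each of size at least $2$, and $V_1$ is anticomplete to $V_2$ since they are unions of distinct components of $G \setminus X$. Thus $(V_1, V_2, X)$ witnesses that $X$ is an internal clique cutset.

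I do not expect any real obstacle. The only point needing a moment's care is the degenerate case $X = \emptyset$ (i.e.\ $G$ itself disconnected), but the argument goes through unchanged, since an isolated vertex has empty, hence clique, neighborhood and would therefore be simplicial.
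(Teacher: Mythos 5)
Your proof is correct and is exactly the straightforward argument the paper leaves implicit (it states this as an observation without proof): a singleton component of $G\setminus X$ would have its neighborhood inside the clique $X$ and hence be simplicial, so every component has at least two vertices and grouping components gives the required partition $(V_1,V_2,X)$. Your handling of the degenerate case $X=\emptyset$ is also fine, since an isolated vertex is simplicial.
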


\begin{observation}
\label{obs:parallel_edges}
Parallel edges in a multigraph $G$ correspond to twin vertices in the line graph of $G$.
\end{observation}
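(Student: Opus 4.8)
The proof is a direct unwinding of definitions, so I expect no genuine obstacle. Recall that two edges of a multigraph are \emph{parallel} when they have the same pair of endpoints. The plan is as follows: let $e$ and $f$ be parallel edges of $G$, with common endpoints $u$ and $v$. In $L(G)$ the vertices are the edges of $G$, and two of them are adjacent precisely when they share an endpoint; since $e$ and $f$ share both $u$ and $v$ they are adjacent in $L(G)$, so the only remaining task is to compare their neighbourhoods in $L(G)$.

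For that, I would observe that for any edge $g$ of $G$ with $g \neq e$ and $g \neq f$, the vertex $g$ is adjacent to $e$ in $L(G)$ if and only if $g$ is incident with $u$ or with $v$, and this is exactly the same condition under which $g$ is adjacent to $f$ in $L(G)$. Hence $N_{L(G)}(e) \setminus \{f\} = N_{L(G)}(f) \setminus \{e\}$, and combined with the adjacency of $e$ and $f$ this is precisely the statement that $e$ and $f$ are twins of $L(G)$ in the sense of Section~\ref{sec:preliminaries}. The only point requiring a moment's care is that the comparison of neighbourhoods must be made ``apart from each other'', which is exactly why $e$ and $f$ are excluded as we range over $g$ and is also exactly what the definition of twins demands, so the argument closes with nothing further to check.
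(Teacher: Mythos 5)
Your argument is correct and is exactly the definitional unwinding one expects; the paper in fact states this observation without proof because it is immediate, and your verification (shared endpoints give adjacency, and any third edge meets $e$ iff it meets $f$, so the neighbourhoods agree apart from each other) matches the intended reasoning.
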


\begin{observation}
\label{obs:simplicial_clique}
If $G$ is a claw-free graph and $v \in V(G)$ is a simplicial vertex of $G$ with $N(v) = K$, then $K$ is a simplicial clique in $G\setminus v$.
\end{observation}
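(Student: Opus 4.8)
The plan is to verify the defining property of a simplicial clique directly, using only claw-freeness. Since $v$ is simplicial, $K=N(v)$ is a clique, and it is non-empty as long as $v$ has at least one neighbour --- the only case in which the statement has content, so I would dispose of the isolated case in a half-sentence. Fix an arbitrary $k\in K$; the task is to show that the set $S:=N_{G\setminus v}(k)\setminus K$, i.e. the set of neighbours of $k$ in $\bigl(V(G)\setminus\{v\}\bigr)\setminus K$, is a clique.

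The key point to record is how $v$ interacts with $S$. On one hand $v\in N_G(k)$, because $k\in K=N(v)$; on the other hand $v\notin K$, so $v\notin S$. Moreover every $u\in S$ satisfies $u\notin K=N(v)$, hence $uv\notin E(G)$, and $u\ne v$. Thus $S\cup\{v\}$ is a set of $|S|+1$ distinct vertices, all contained in $N_G(k)$, and $v$ is anticomplete to $S$.

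Now suppose for contradiction that $S$ is not a clique and choose non-adjacent $a,b\in S$. Then $a,b,v$ are three pairwise non-adjacent vertices of $N_G(k)$ (using $ab\notin E(G)$ by choice, and $av,bv\notin E(G)$ by the previous paragraph), so $\{k,a,b,v\}$ induces a claw with centre $k$, contradicting claw-freeness of $G$. Hence $S$ is a clique; since $k\in K$ was arbitrary, $K$ is a simplicial clique in $G\setminus v$. I do not expect any real obstacle here: the entire content is the single observation that three non-neighbours of $k$ together with $k$ form the forbidden claw, and the rest is just bookkeeping about membership in $K$ and exclusion of $v$.
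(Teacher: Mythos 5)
Your proof is correct: the paper states this as an observation without proof, and your claw argument ($\{k,v,a,b\}$ would be a claw centred at $k$ if two neighbours $a,b$ of $k$ outside $K$ were non-adjacent, since neither is adjacent to $v$) is exactly the intended one-line justification. The only caveat is the degenerate case where $v$ is isolated (then $K=\emptyset$ is not a simplicial clique by the paper's definition), which you already flag and which never arises in the paper's applications.
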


\begin{observation}
\label{obs:simplicial_stable}
If $G$ is a graph and no two simplicial vertices of $G$ are twins, then the simplicial vertices of $G$ form a stable set.
\end{observation}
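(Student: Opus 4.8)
The plan is to prove the statement by contradiction: suppose that two simplicial vertices $u, v$ of $G$ are adjacent, and deduce that they must be twins, contradicting the hypothesis. So the whole argument reduces to the claim that any two \emph{adjacent} simplicial vertices are twins.

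The key observation is that if $w$ is a simplicial vertex, then $N[w]$ is a clique: by definition $N(w)$ is a clique, and $w$ is adjacent to every vertex of $N(w)$, so $N[w] = N(w) \cup \{w\}$ is a clique. I would apply this to both $u$ and $v$. Since $u$ and $v$ are adjacent, we have $v \in N(u)$ and $u \in N(v)$. Because $N[u]$ is a clique containing $v$, every vertex of $N(u) \setminus \{v\}$ is adjacent to $v$, hence $N(u) \setminus \{v\} \subseteq N(v) \setminus \{u\}$; symmetrically, because $N[v]$ is a clique containing $u$, we get $N(v) \setminus \{u\} \subseteq N(u) \setminus \{v\}$. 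Therefore $N(u) \setminus \{v\} = N(v) \setminus \{u\}$, and as $u$ and $v$ are adjacent this is precisely the statement that $u$ and $v$ are twins.

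This contradicts the assumption that no two simplicial vertices of $G$ are twins, so no two simplicial vertices can be adjacent, i.e.\ the set of simplicial vertices of $G$ is stable. There is essentially no difficult step here; the only thing to be careful about is bookkeeping with the definitions of $N(\cdot)$, $N[\cdot]$, and ``twins'' --- in particular that ``twins'' as defined in the paper requires the two vertices to be adjacent, which is exactly the situation produced by the contradiction hypothesis.
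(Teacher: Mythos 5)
Your proof is correct and is essentially the same argument as the paper's: the paper assumes two adjacent simplicial vertices are not twins, takes (after symmetry) a witness $w$ adjacent to $u$ and not $v$, and contradicts $N(u)$ being a clique, which is just the contrapositive packaging of your claim that adjacent simplicial vertices must be twins.
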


\begin{proof}
Let $u, v$ be simplicial vertices of $G$ and assume that they are adjacent. Since $u$ and $v$ are not twins, we may assume that there is a vertex $w$ in $G$ adjacent to $u$ and non-adjacent to $v$. But this is a contradiction to the fact that $u$ is simplicial.
\end{proof}

\begin{observation}
\label{obs:simplicial_internal}
Let $G$ be a graph with no twins and $G'$ be the graph obtained from $G$ by deleting the simplicial vertices of $G$. Then, every clique cutset of $G'$ is an internal clique cutset of $G$.
\end{observation}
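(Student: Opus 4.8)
The plan is to show that the clique cutset $X$ of $G'$ is in fact a clique cutset of $G$ with both sides large. Write $S$ for the set of simplicial vertices of $G$, so $G' = G \setminus S$, and write $X$ for the given clique cutset of $G'$. Since $G$ has no twins, Observation \ref{obs:simplicial_stable} gives that $S$ is a stable set; in particular no vertex of $S$ has a neighbour in $S$. Also $X \subseteq V(G')$ is disjoint from $S$, and $X$ is a clique of $G$ since $G'$ is an induced subgraph of $G$. So it will be enough to exhibit a partition $(V_1,V_2,X)$ of $V(G)$ with $V_1$ anticomplete to $V_2$ and $|V_1|,|V_2|\ge 2$.

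The key step is a locality property of simplicial vertices. Let $C_1,\dots,C_k$ (with $k\ge 2$) be the components of $G'\setminus X$; since $V(G')=X\cup C_1\cup\cdots\cup C_k$, every $u\in C_i$ satisfies $N_G(u)\subseteq X\cup C_i\cup S$. Now take $v\in S$. Then $N_G(v)$ is a clique, and it avoids $S$ (as $S$ is stable), so $N_G(v)\subseteq V(G')$. If $v$ has a neighbour $u\in C_i$, then every vertex of $N_G(v)$ lies in $(N_G(u)\cup\{u\})\setminus S\subseteq X\cup C_i$, so $N_G(v)\subseteq X\cup C_i$. Hence each $v\in S$ is, in $G\setminus X$, either isolated (if $N_G(v)\subseteq X$, or $N_G(v)=\emptyset$) or adjacent only to vertices of a single $C_i$. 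Consequently no path of $G\setminus X$ joins different $C_i$'s, so the $C_i$ lie in pairwise distinct components of $G\setminus X$; in particular $G\setminus X$ is disconnected and $X$ is a clique cutset of $G$.

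It remains to bound the sizes. For each $i$ let $C_i^+$ be the component of $G\setminus X$ containing $C_i$; these are distinct by the previous step. I claim $|C_i^+|\ge 2$ for all $i$. If $|C_i|\ge 2$ this is clear. If $C_i=\{w\}$, then $w\in V(G')$ is not simplicial in $G$, so $N_G(w)$ is not a clique; but $N_{G'}(w)\subseteq X$ is a clique, so $w$ must have a neighbour $v\in S$, and then $v\notin X$ and $v\in C_i^+$, giving $|C_i^+|\ge 2$. Taking $V_1=C_1^+$ and $V_2=V(G)\setminus(X\cup C_1^+)$ then yields a partition $(V_1,V_2,X)$ of $V(G)$ with $X$ a clique, $V_1$ a union of components of $G\setminus X$ (hence anticomplete to $V_2$), and $|V_1|\ge 2$, $|V_2|\ge|C_2^+|\ge 2$, as required. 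The only point needing care is the locality claim in the second paragraph — ruling out a simplicial vertex of $G$ that bridges two components of $G'\setminus X$ — and that is exactly where the hypothesis that $G$ has no twins (via Observation \ref{obs:simplicial_stable}) is used.
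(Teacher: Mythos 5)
Your proof is correct and takes essentially the same route as the paper's: stability of the set $S$ of simplicial vertices (Observation~\ref{obs:simplicial_stable}, using the no-twins hypothesis), the fact that a simplicial vertex's clique neighbourhood cannot straddle the cutset so each vertex of $S$ attaches to only one side, and the observation that a singleton side would itself be a simplicial vertex, a contradiction. The paper argues directly with the two sides $(A_1,A_2,K)$ of the given cutset rather than with the components of $G'\setminus X$ and of $G\setminus X$, which shortens the bookkeeping, but the underlying ideas are identical.
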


\begin{proof}
Let $S$ be the set of simplicial vertices of $G$. By Observation \ref{obs:internal} we may assume that $S \neq \emptyset$, and by Observation \ref{obs:simplicial_stable}, $S$ is stable. Let $K$ be a clique cutset in $G'$ where $(A_1, A_2, K)$ is a partition of $V(G')$ such that $A_1$ is anticomplete to $A_2$. Let $S_1 = \{s \in S : s \text{ has a neighbor in } A_1 \}$ and $S_2 = S \setminus S_1$. Since every $s \in S$ is simplicial, $S_1$ is anticomplete to $A_2$. Thus, $A_1 \cup S_1$ is anticomplete to $A_2 \cup S_2$, and so $K$ is a clique cutset in $G$. We claim that $K$ is internal. Suppose not, then for some $i \in \{1,2\}$, $|A_i \cup S_i| =1$. It follows that $S_i = \emptyset$ and $A_i = \{a\}$. But now $N_G(a) \subseteq K$, and so $a \in S$, a contradiction.
\end{proof}

We thank Sophie Spirkl for pointing out to us a short proof of the following lemma.

\begin{lemma}
\label{lem:hole_or_simplicial}
Let $K$ be a clique cutset in a graph $G$ where $(A, B, K)$ is a partition of $V(G)$ such that $A$ is anticomplete to $B$. Then, either there is a hole in $G | (K \cup B)$, or there is a simplicial vertex of $G$ in $B$.
\end{lemma}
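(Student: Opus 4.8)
The plan is to induct on $|B|$. If $B$ contains a simplicial vertex of $G$, we are done, so we may assume no vertex of $B$ is simplicial in $G$. Pick any vertex $b \in B$. Since $b$ is not simplicial in $G$, $N_G(b)$ is not a clique; since $A$ is anticomplete to $B$, we have $N_G(b) \subseteq K \cup B$, and so $N_G(b)$ contains two non-adjacent vertices $x, y$, giving an induced path $x\d b\d y$ in $G|(K\cup B)$. The idea is to grow such a path or directly find a hole. The natural object to work with is a shortest induced path $P = p_0\d p_1\d\dots\d p_k$ in $G|(K\cup B)$ with both endpoints in $B$ and $k \geq 2$ (such a path exists by the previous sentence). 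If $P$ itself closes up into a hole we are done, so the work is in the case where it does not.

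First I would observe that at most one internal vertex of $P$ lies in $K$ (two internal vertices in the clique $K$ cannot both be non-adjacent to each other along an induced path unless they are consecutive, and consecutive vertices of $K$ on $P$ can't both be internal without a chord among the three consecutive $K$-vertices — so really $|V(P)\cap K|\le 2$ and if equal the two $K$-vertices are consecutive). So $P$ has many vertices in $B$; in particular $P$ visits $B$, and since $P$ is induced with endpoints in $B$ and length $\ge 2$, some internal vertex $p_i$ lies in $B$. That vertex $p_i\in B$ is not simplicial in $G$, so $N_G(p_i)$ is not a clique, and $N_G(p_i)\subseteq K\cup B$. I would then try to use a non-adjacent pair in $N_G(p_i)$ together with the path $P$ to produce either a strictly shorter/structurally simpler configuration or an actual hole, closing the induction. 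Concretely: consider the two neighbors $p_{i-1},p_{i+1}$ of $p_i$ on $P$; if $p_{i-1}$ is non-adjacent to some $z\in N_G(p_i)$ not on $P$, one can reroute through $z$ and the initial segment of $P$; the minimality of $P$ forces $z$ to be adjacent to earlier vertices in a controlled way, eventually yielding a hole.

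The cleanest route, which I would pursue first, is actually slightly different and avoids delicate rerouting: consider a minimal (under inclusion of vertex set) induced subgraph $H$ of $G|(K\cup B)$ that is connected, contains at least two vertices of $B\setminus K = B$, and has the property that $H$ is not a clique — or better, run the induction on $B$ by passing to $B' = B\setminus\{b\}$ for a well-chosen $b$. If some $b\in B$ has $N_G(b)\cap(K\cup B)$ a clique, then $b$ is simplicial in $G$ (since $N_G(b)\subseteq K\cup B$) and we are done; otherwise every $b\in B$ has a non-neighbor pair in its neighborhood inside $K\cup B$. The hard part will be handling the interaction with $K$: because $K$ is a clique, a vertex $b\in B$ whose only non-adjacent pair in $N_G(b)$ straddles $K$ and $B$ needs care, and one must make sure the clique cutset structure (everything in $N_G(b)$ beyond $B$ sits in the clique $K$) is exploited so that the induced path we build through $B$ does not get short-circuited by $K$. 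I expect that is the only real obstacle; once it is handled, closing an induced path of length $\ge 2$ with endpoints in $B$ into a hole, using that interior $B$-vertices are non-simplicial, is routine. I would present it as: take a shortest induced path $Q$ with distinct endpoints $u,v\in B$ such that $u,v$ have a common neighbor (equivalently, consider a shortest cycle-like configuration); non-simpliciality of $B$-vertices guarantees $Q$ has length $\ge 2$; analyze whether $u,v$ are adjacent (hole) or, if not, extend using a non-neighbor in the neighborhood of an endpoint and invoke minimality to derive a hole.
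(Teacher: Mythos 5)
There is a genuine gap here: your text is a plan rather than a proof. After reducing to the case where no vertex of $B$ is simplicial in $G$, the entire content of the lemma is to produce a hole in $G|(K\cup B)$, and that is exactly the step you never carry out. The rerouting argument (``if $p_{i-1}$ is non-adjacent to some $z\in N_G(p_i)$ not on $P$, one can reroute\dots the minimality of $P$ forces\dots eventually yielding a hole'') and the final paragraph are stated as intentions (``I would then try'', ``I expect that is the only real obstacle'', ``is routine''), but the deferred step is precisely the nontrivial mathematical content: it is equivalent to the classical fact (Dirac) that a chordal graph which is not complete has two non-adjacent simplicial vertices, and that fact is not proved by closing shortest paths into holes -- in the relevant case there are no holes at all, so one must directly exhibit a simplicial vertex, which requires a different mechanism (e.g.\ minimal separators being cliques, or an induction on components of the complement of a clique separator). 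There is also a concrete false step in the sketch: from ``$P$ is induced with endpoints in $B$ and length $\geq 2$'' you conclude that some internal vertex of $P$ lies in $B$, but a path $b\d k\d b'$ with $k\in K$ has its unique internal vertex in $K$, so the claimed internal $B$-vertex need not exist.

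For comparison, the paper's proof sidesteps all of this in three lines: assume $G|(K\cup B)$ has no hole, i.e.\ it is chordal. If it is complete, every vertex of $B$ is simplicial in $G$ (using that $A$ is anticomplete to $B$). Otherwise, being chordal and not complete, it has two non-adjacent simplicial vertices; since $K$ is a clique, at least one of them, say $x$, lies in $B$, and again because $A$ is anticomplete to $B$, the vertex $x$ is simplicial in $G$. If you want a self-contained argument along your lines, you would need to actually prove the Dirac-type statement (for instance by induction using a minimal $u$--$v$ separator, which in a chordal graph is a clique), not just assert that the path-extension ``eventually'' yields a hole.
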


\begin{proof}
We may assume that $G' = G | (K \cup B)$ is a chordal graph, for otherwise the conclusion of the theorem holds. If $G'$ is a complete graph, then every vertex of $B$ is a simplicial vertex of $G$. If $G'$ is not a complete graph, since it is chordal, there are two non-adjacent simplicial vertices $x, y$ in $G'$. Since $K$ is a clique, at least one of $x, y$ is in $B$, say $x \in B$. Then $x$ is a simplicial vertex of $G$ in $B$.
\end{proof}

A \emph{clown} is a graph with vertex set $\{c_0, c_1, \dots, c_k\}$ where $\{c_1, \dots, c_k\}$ is an even hole and $c_0$ is adjacent to exactly $c_1, c_2$, as in Figure \ref{fig:clown}. We call $c_0$ the \emph{hat} of the clown. We write $h(D)$ to denote the hat of a clown $D$.

\begin{figure}[h]
\begin{center}
\begin{tikzpicture}[scale=0.28]

\node[label=left:{$c_k$}, inner sep=2.5pt, fill=black, circle] at (0, 0)(v1){}; 
\node[label=right:{$c_3$}, inner sep=2.5pt, fill=black, circle] at (8, 0)(v2){}; 
\node[label=left:{$c_1$}, inner sep=2.5pt, fill=black, circle] at (2, 2)(v3){}; 
\node[label=right:{$c_2$}, inner sep=2.5pt, fill=black, circle] at (6, 2)(v4){};
\node[label=above:{$c_0$}, inner sep=2.5pt, fill=black, circle] at (4, 4.5)(v5){}; 

\draw[black, dotted, thick] (v1)  .. controls +(0,-6) and +(0,-6) .. (v2);
\draw[black, thick] (v1) -- (v3);
\draw[black, thick] (v2) -- (v4);
\draw[black, thick] (v3) -- (v4);
\draw[black, thick] (v3) -- (v5);
\draw[black, thick] (v4) -- (v5);

\node at (4,-0.8) {even};

\end{tikzpicture}
\end{center}
\vspace{-1cm}
\caption{A clown}
\label{fig:clown}
\end{figure}

A vertex $v$ in a graph $G$ is called \emph{safe} if it is simplicial and if for every clown $D$ in $G$ the following holds: every path $P$ from $v$ to $h(D)$ such that there are no edges between $P\setminus \{h(D)\}$ and $D \setminus \{h(D)\}$ has odd length. In particular, no safe vertex is a hat for a clown.

\begin{observation}
\label{obs:consistent_safe}
If $Z$ is a consistent set of safe vertices in a graph $G$, then $Z \setminus v$ is a consistent set of safe vertices in $G\setminus v$ for every $v \in V(G)$.
\end{observation}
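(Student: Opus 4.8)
The plan is to show that every one of the defining properties of ``consistent set of safe vertices'' is inherited by induced subgraphs, so that passing from $G$ to $G\setminus v$ cannot destroy it. The single observation used throughout is the following trivial monotonicity: if $W$ is a path (more generally, an induced subgraph) of $G\setminus v$, then $W$ is also a path (induced subgraph) of $G$ with exactly the same adjacencies; conversely, deleting $v$ can only remove configurations, never create new ones. Note also that if $v\notin Z$ then $Z\setminus v=Z$ and the same argument applies verbatim, so there is no special case to separate out.

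First I would handle consistency. Fix $u,w\in Z\setminus v$. They are non-adjacent in $G$, hence non-adjacent in $G\setminus v$. Moreover, any path $P$ from $u$ to $w$ in $G\setminus v$ is in particular a path from $u$ to $w$ in $G$, since it does not use $v$; as $\{u,w\}$ is an even pair in $G$, the path $P$ has even length. Thus $\{u,w\}$ is an even pair in $G\setminus v$, and since $u,w$ were arbitrary, $Z\setminus v$ is consistent in $G\setminus v$.

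Next I would verify that every $u\in Z\setminus v$ is safe in $G\setminus v$. Simpliciality is immediate, since $N_{G\setminus v}(u)=N_G(u)\setminus\{v\}$ is a subset of the clique $N_G(u)$, hence itself a clique. For the clown condition, let $D$ be any clown in $G\setminus v$. As $v\notin V(D)$, $D$ is also a clown of $G$, with the same hat $h(D)$. Let $P$ be any path from $u$ to $h(D)$ in $G\setminus v$ such that there are no edges between $P\setminus\{h(D)\}$ and $D\setminus\{h(D)\}$. Since $v$ does not lie on $P$, $P$ is a path from $u$ to $h(D)$ in $G$ enjoying the same non-adjacency property, so by the safeness of $u$ in $G$ the path $P$ has odd length. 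Hence $u$ satisfies the clown condition in $G\setminus v$, i.e. $u$ is safe in $G\setminus v$. Combining the two parts, $Z\setminus v$ is a consistent set of safe vertices in $G\setminus v$.

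I do not expect a genuine obstacle: the whole content is the monotonicity of these notions under vertex deletion. The only point that warrants a moment of care is checking that $G\setminus v$ contains no \emph{new} relevant paths or clowns that were absent in $G$ — but deleting a vertex only restricts which vertex sets are available, so every clown and every admissible path occurring in $G\setminus v$ already occurs in $G$, where the hypotheses on $Z$ apply.
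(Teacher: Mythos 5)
Your proof is correct, and it matches the paper's treatment: the paper states this observation without proof precisely because, as you argue, every path, clique, and clown of $G\setminus v$ is already present with identical adjacencies in $G$, so consistency and safeness are inherited by vertex deletion. Nothing further is needed.
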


\begin{observation}
\label{obs:path_to_hole}
Let $G$ be a claw-free innocent graph.
\begin{enumerate}
\itemsep0em
\item Let $p_1 \in V(G)$ and let $C$ be a hole in $G \setminus p_1$. Let $P = p_1 \d p_2 \d \dots \d p_k$ be a path in $G$ from $p_1$ to $C$. If $k \geq 2$ or $p_1$ is safe, then $C \cup \{p_k\}$ is a clown with hat $p_k$. Hence, if $p_1$ is safe, then $P$ is odd. 

\item In particular, for a safe vertex $v$ and a hole $H$ in $G$, $v \notin V(H)$ and $v$ has no neighbors in $H$.

\item Let $C_1, C_2$ be two disjoint holes in $G$ such that $V(C_1)$ is anticomplete to $V(C_2)$. Let $Q = q_1 \d q_2 \d \dots \d q_\ell$ be a path in $G$ with $\ell \geq 1$ such that it is a path from $q_1$ to $C_2$ and a path from $q_\ell$ to $C_1$. Then, $Q$ is even, and $C_1 \cup \{q_1\}$ is a clown with hat $q_1$, and $C_2 \cup \{q_\ell\}$ is a clown with hat $q_\ell$.
\end{enumerate}
\end{observation}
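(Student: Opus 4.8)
The plan is to isolate a single \emph{attachment lemma} and deduce all three parts from it. The lemma states: \emph{if $C$ is a hole of $G$ and $w\in V(G)\setminus V(C)$ has a neighbour in $C$, and either $(a)$ $w$ is simplicial or $(b)$ $w$ has a neighbour $u\notin V(C)$ that is anticomplete to $V(C)$, then $N(w)\cap V(C)$ is an edge of $C$, so that $C\cup\{w\}$ is a clown with hat $w$ (the hole $C$ being even since $G$ is innocent)}. To prove it I would first show that $N(w)\cap V(C)$ is a clique: in case $(a)$ this is immediate, and in case $(b)$, two non-adjacent vertices $c,c'$ of $N(w)\cap V(C)$ would together with $u$ form a claw with centre $w$ (they are pairwise non-adjacent since $c\not\sim c'$ and $u$ is anticomplete to $V(C)$), contradicting claw-freeness. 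A hole is triangle-free, so $N(w)\cap V(C)$ now has one or two vertices; a single vertex $c$ is impossible, since then the two neighbours of $c$ on $C$ --- which are non-adjacent as $|V(C)|\ge 4$ --- would with $w$ form a claw with centre $c$. Hence $N(w)\cap V(C)$ is an edge of $C$, and the clown conclusion follows.

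For Part~1, the definition of a path from $p_1$ to $C$ gives that $p_k\notin V(C)$, that $p_k$ has a neighbour in $C$, and that $\{p_1,\dots,p_{k-1}\}$ is anticomplete to $C$. If $k\ge 2$, apply the lemma to $w=p_k$ with $u=p_{k-1}$ (case $(b)$); if $k=1$, then $p_1$ is safe by hypothesis, hence simplicial, and we apply the lemma to $w=p_k=p_1$ in case $(a)$. Either way $C\cup\{p_k\}$ is a clown with hat $p_k$. For the last sentence, suppose $p_1$ is safe and set $D=C\cup\{p_k\}$; if $k=1$ then $p_1=p_k$ would be the hat of $D$, which no safe vertex is, so $k\ge 2$, and $P$ is then a path from the safe vertex $p_1$ to $h(D)=p_k$ with $V(P)\setminus\{p_k\}$ anticomplete to $D\setminus\{h(D)\}=C$; so $P$ is odd by the definition of a safe vertex. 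Part~2 follows at once: a safe vertex $v$ cannot lie on a hole $H$, as its two neighbours on $H$ are non-adjacent while $v$ is simplicial; and if $v$ had a neighbour on $H$, the one-vertex path $(v)$ would be a path from $v$ to $H$, so by Part~1 the set $H\cup\{v\}$ would be a clown with hat $v$, again impossible.

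For Part~3, apply the lemma to $w=q_1$ and the hole $C_1$, taking $u=q_2$ if $\ell\ge 2$ and $u$ a neighbour of $q_1$ in $C_2$ if $\ell=1$; in both cases $u$ lies outside $V(C_1)$ and is anticomplete to $V(C_1)$ (in the second case because $V(C_1)$ is anticomplete to $V(C_2)$), so $C_1\cup\{q_1\}$ is a clown with hat $q_1$, and symmetrically $C_2\cup\{q_\ell\}$ is a clown with hat $q_\ell$. For the parity statement: if $\ell=1$ then $Q$ has length $0$ and is even. If $\ell\ge 2$ and $Q$ were odd, then the two clown structures say that $q_1$ is adjacent to exactly an edge of $C_1$ and $q_\ell$ to exactly an edge of $C_2$, while the hypotheses say that $V(C_1)$ is anticomplete to $V(C_2)$ and that $\{q_2,\dots,q_\ell\}$ and $\{q_1,\dots,q_{\ell-1}\}$ are anticomplete to $C_1$ and $C_2$ respectively; so the induced subgraph on $V(C_1)\cup V(C_2)\cup V(Q)$ is exactly two vertex-disjoint even cycles joined by the odd path $Q$ whose ends form triangles with the cycles --- that is, a handcuff, contradicting innocence. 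Hence $Q$ is even.

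The only real content is the attachment lemma: a pendant path reaching a hole of a claw-free innocent graph must land on one of its edges, turning the hole into a clown. I expect the delicate points to be the degenerate cases $k=1$ (Part~1) and $\ell=1$ (Part~3), where there is no previous path-vertex to serve as $u$ and one must instead invoke simpliciality or use a vertex of the other hole, and the verification in Part~3 that the exhibited subgraph is \emph{precisely} a handcuff --- in particular when $\ell=2$, where the connecting odd path is just the single edge $q_1q_2$.
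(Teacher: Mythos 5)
Your proposal is correct and takes essentially the same route as the paper: a claw argument showing that the attaching vertex (using either its simpliciality or a pendant neighbour anticomplete to the hole) sees exactly one edge of the hole, giving the clown, after which the parity claims follow from the definition of a safe vertex and from the absence of handcuffs. Your explicit treatment of the $\ell=1$ case in part~3, where the extra claw leaf is taken from the other hole since the hypothesis ``$k\geq 2$ or $p_1$ safe'' of part~1 need not hold, is in fact slightly more careful than the paper, which simply applies the first assertion to the pairs $(Q,C_1)$ and $(Q,C_2)$.
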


\begin{proof}
Let the vertices of $C$ be $c_1 \d \dots \d c_m \d c_1$ in order, and assume that $k \geq 2$ or $p_1$ is safe. If $p_k$ has only one neighbor in $C$, say $c_1$, then $\{c_1, c_2, c_m, p_k\}$ is a claw in $G$. Suppose $p_k$ has two non-adjacent neighbors $c_i, c_j$ in $C$. Since $k \geq 2$ or $p_1$ is safe (and so simplicial), it follows that $p_k \neq p_1$, and $\{c_i, c_j, p_k, p_{k-1}\}$ is a claw in $G$. Thus, $p_k$ has exactly two neighbors in $C$ and they are consecutive in $C$, and so $C \cup \{p_k\}$ is a clown in $G$. Hence, if $p_1$ is safe, then $P$ is odd. This proves the first assertion. For the second assertion, $v \notin V(H)$ since $v$ is simplicial, and $v$ has no neighbors in $H$ by the first assertion. Finally, the third assertion follows from the first assertion applied to the path-hole pairs $(Q, C_1)$ and $(Q, C_2)$, separately. Also, since $C_1 \cup V(Q) \cup C_2$ is not a handcuff, $Q$ has even length (possibly zero).
\end{proof}

The next lemma can be considered as the central observation of this paper. It lies in the heart of our results and it is this idea which makes the essence of our proof simple.

\begin{lemma}\label{lem:heart_of_it_all}
Let $G$ be a claw-free innocent graph and let $v_0$ be a simplicial vertex of $G$. Let
\begin{enumerate}
\itemsep0em
\item $G_1'$ be the graph obtained from $G$ by adding the vertices $\{v_1, v_2, \dots, v_m\}$, where $m \geq 3$ is odd, such that $v_0 \d v_1 \d \dots \d v_m \d v_0$ is a hole, $v_1$ is complete to $N_G[v_0]$ and anticomplete to $V(G) \setminus N_G[v_0]$, and $\{v_2, \dots, v_m\}$ is anticomplete to $V(G) \setminus \{v_0\}$.

\item $G_2'$ be the graph obtained from $G$ by adding the vertices $\{v_1, v_2, \dots, v_m\}$, where $m \geq 2$ is even, such that $v_0 \d v_1 \d \dots \d v_m$ is a path, and $\{v_1, v_2, \dots, v_m\}$ is anticomplete to $V(G) \setminus \{v_0\}$.

\item $G_3'$ be the graph obtained from $G$ by adding the vertices $\{v_1, v_2, \dots, v_m\}$, where $m \geq 2$ is even, and the vertices $\{c_0, c_1, \dots, c_k\}$, where $k \geq 4$ is even, such that $v_0 \d v_1 \d v_2 \d \dots \d v_m \d c_0$ is a path, $\{c_0, c_1, \dots, c_k\}$ is a clown with hat $c_0$, $\{v_1, v_2, \dots, v_m\}$ is anticomplete to $V(G) \setminus \{v_0\}$, and $\{c_1, \dots, c_k\}$ is anticomplete to $V(G) \cup \{v_1, v_2, \dots, v_m\}$.
\end{enumerate}
Then, for $i=1,2,3$, the following statements hold:
\begin{enumerate}[label=(\alph*)]
\itemsep0em
\item\label{it:first} If $v_0$ is a safe vertex of $G$, then $G_i'$ is claw-free and innocent. 
\item\label{it:second} Suppose $G_i'$ is claw-free and innocent. Let $Z'$ be a consistent set of safe vertices in $G_i'$ where if $i=2$ then $v_m \in Z'$. Then, $Z = (Z' \cap V(G)) \cup \{v_0\}$ is a consistent set of safe vertices in $G$. Also, if $S$ is a strong stable set in $G$ with $Z \subseteq S$, then $G_i'$ has a strong stable set $S'$ with $Z' \subseteq S'$.
\end{enumerate}
\end{lemma}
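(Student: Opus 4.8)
The three constructions are parallel: $G_i'$ is obtained by gluing a ``pendant'' of new vertices to $G$ at the simplicial vertex $v_0$, and both \ref{it:first} and \ref{it:second} amount to understanding how claws, the five forbidden subgraphs, clowns and paths interact with this pendant. I would treat the three values of $i$ together, flagging the minor differences as they arise.

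For \ref{it:first}, I would first dispose of claw-freeness, which only needs $v_0$ simplicial: each new vertex other than $v_0$ has either at most two neighbours, or (for $v_1$ in case $1$ and $c_0$ in case $3$) a neighbourhood of independence number $2$ because $N_G[v_0]$ is a clique, resp.\ $c_1c_2$ is an edge, so no new vertex centres a claw, while a claw $\{v;a,b,v_1\}$ at a vertex $v\in V(G)$ would give $a,b\in N_G(v)\setminus N_G[v_0]$ nonadjacent and hence the claw $\{v;a,b,v_0\}$ in $G$. For innocence I would argue by contradiction: a forbidden subgraph $F\subseteq G_i'$ must meet the pendant (as $G$ is innocent), and tracing $F$ through it, using that the five graphs have minimum degree $\geq 2$ ($\geq 3$ for antiholes of length $\geq 6$) and that the internal pendant vertices have degree $2$, one finds: in case $2$ the pendant is a path with a degree-$1$ end, so $F$ cannot meet it — contradiction (so $G_2'$ is innocent already from simpliciality); and in cases $1,3$ the tracing forces $F$ to contain the newly added even cycle $C$ (the hole $v_0\d v_1\d\dots\d v_m\d v_0$, resp.\ $c_1\d\dots\d c_k$), while $F$ is not an odd hole ($C$ is even and not extendable through the degree-$2$ vertices) nor an odd prism (a degree-$2$ or triangle vertex of a prism cannot land on $v_1$ since $N_G[v_0]$ is a clique), the remainder of $F$ lies in $V(G)$, and $C$ is attached to it only through $v_0$ (and $N_G[v_0]$). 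An eye mask then forces $v_0$ to be a hat of a clown of $G$, and a handcuff forces an even path from $v_0$ to the hat of a clown of $G$ with interior anticomplete to the clown's hole — both contradicting that $v_0$ is safe. The fiddly part is this case-check of which vertex of $F$ sits where; the conceptual content is the dichotomy ``eye mask $\leftrightarrow$ $v_0$ is a clown hat'', ``handcuff $\leftrightarrow$ even bad path from $v_0$'' (and Observation~\ref{obs:path_to_hole} handles the degree-$1$ contact of $v_0$ with holes).

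For \ref{it:second}, I would first note that safeness descends to the induced subgraph: for $z\in Z'\cap V(G)$, every clown of $G$ and every $G$-path is also a clown, resp.\ path, of $G_i'$, and anticompleteness inside $V(G)$ is unchanged, so a bad path for $z$ in $G$ would be one in $G_i'$; hence each such $z$ stays safe in $G$. That $v_0$ is safe in $G$ is the converse of the innocence argument above: if $v_0$ violated safeness — including the degenerate case $|P|=0$, i.e.\ $v_0$ itself a clown hat — one builds the corresponding handcuff or eye mask in $G_i'$ (for $i=1,3$) out of the offending clown and the added even cycle, contradicting innocence; for $i=2$, since the pendant is inert for innocence, one instead uses $v_m\in Z'$ and checks that ``$v_m$ safe in $G_2'$'' is literally equivalent to ``$v_0$ safe in $G$'', because every $G_2'$-path from $v_m$ to the hat of a clown of $G$ must begin $v_m\d v_{m-1}\d\dots\d v_0$, adding the even number $m$ to its length. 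Consistency of $Z$: evenness of a pair in $Z'\cap V(G)$ is inherited since $G$-paths are $G_i'$-paths; for a pair $\{x,v_0\}$ with $x\in Z'\cap V(G)$ (and $x\notin N(v_0)$, as otherwise the construction below makes $x$ a clown hat in $G_i'$), given a $G$-path $Q$ from $x$ to $v_0$ I would extend it inside $G_i'$ past $v_0$ along the pendant — to $v_m$ (case $2$), to the neighbour $w\in N_G(v_0)$ of $v_0$ on $Q$, which is a hat of the clown $\{v_0,\dots,v_m\}\cup\{w\}$ (case $1$), or to $c_0=h(\{c_0,\dots,c_k\})$ (case $3$) — obtaining an induced path whose length is pinned down by the safeness of $x$ (or by $\{x,v_m\}$ being even), and since the pendant contributes a fixed parity, $|Q|$ is forced even.

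It remains to extend $S$. I would list the maximal cliques of $G_i'$: those of $G$ (in case $1$, each one through $v_0$ enlarged by $v_1$), the edges of the pendant path, the clique $\{v_0,v_m\}$ (case $1$) or $\{v_m,c_0\}$ (case $3$), and in case $3$ the triangle $\{c_0,c_1,c_2\}$ together with the non-triangle edges of $c_1\d\dots\d c_k$. Since $v_0\in Z\subseteq S$, the set $S$ already meets every maximal clique of $G_i'$ lying in $V(G)$, as well as $\{v_0,v_1\}$ and $\{v_0,v_m\}$. I then add alternate pendant-path vertices $v_2,v_4,\dots$, the parity chosen so that $v_m$ is included when $m$ is even (cases $2,3$ — matching $v_m\in Z'$ in case $2$) and $v_{m-1}$ when $m$ is odd (case $1$, which suffices since $\{v_m\}$ then lies only in the already-covered $\{v_0,v_m\}$); and in case $3$, once $v_m$ forces $c_0$ out, I further add an independent transversal of the even cycle $c_1\d\dots\d c_k$, say $\{c_2,c_4,\dots,c_k\}$ (it exists because $k$ is even), which hits the triangle and all hole-edges. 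The resulting $S'$ is stable, meets every maximal clique of $G_i'$, and contains $Z'$, since no new vertex other than $v_m$ (in case $2$) is simplicial and hence $Z'\subseteq(Z'\cap V(G))\cup\{v_m\}\subseteq S'$. The main obstacle overall is the book-keeping in \ref{it:first}; here the only real difficulty is making all the parities agree, which is precisely why $m$ is even in cases $2,3$, $k$ is even in case $3$, and $v_m\in Z'$ is imposed in case $2$.
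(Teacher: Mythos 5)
Your proposal is correct and follows essentially the same route as the paper: part \ref{it:first} is the part the paper dismisses as straightforward (your case-tracing through the pendant is a reasonable way to fill it in, though for $v_1$ in case 1 you need that every vertex of a forbidden subgraph has two \emph{non-adjacent} neighbours, not just degree at least two), and for part \ref{it:second} you use the same three parity arguments (clown with hat the last $G$-vertex for $i=1$, consistency with $v_m$ for $i=2$, the added clown for $i=3$), the same eye mask/handcuff/$v_m$-safety contradiction to show $v_0$ is safe, and the same explicit sets $S\cup\{v_2,v_4,\dots\}$ (plus $\{c_2,c_4,\dots,c_k\}$ for $i=3$) to extend the strong stable set.
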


\begin{proof}
It is straightforward to check that \ref{it:first} holds. We prove \ref{it:second}. By Observation \ref{obs:consistent_safe}, it is true that $Z_G = Z' \cap V(G)$ is a consistent set of safe vertices in $G$. We want to show that $Z = Z_G \cup \{v_0\}$ is a consistent set of safe vertices in $G$. We claim that all paths from $z \in Z_G$ to $v_0$ in $G$ are even. Suppose there is an odd path $z \d p_1 \d \dots \d p_\ell \d v_0$ from $z \in Z_G$ to $v_0$ in $G$. Then, we get a contradiction in each case:

\vspace{-0.2cm}

\begin{itemize}
\itemsep-0.1em
\item $z$ is not safe in $G_1'$ since $\{p_\ell, v_0, v_1, \dots, v_m\}$ is a clown and the path $z \d p_1 \d \dots \d p_\ell$ is even;
\item $Z'$ is not consistent in $G_2'$ since the path $z \d p_1 \d \dots \d p_\ell \d v_0 \d v_1 \d \dots \d v_m$ is odd;
\item $z$ is not safe in $G_3'$ since $\{c_0, \dots, c_k\}$ is a clown and the path $z \d p_1 \d \dots \d p_\ell \d v_0 \d v_1 \d \dots \d v_m \d c_0$ is even.
\end{itemize}

\vspace{-0.2cm}

\noindent This proves that $Z$ is a consistent set in $G$. Next, we prove that $v_0$ is safe in $G$. Suppose not. Let $D$ be a clown in $G$ and let $P$ be an even path (possibly of length zero) in $G$ from $v_0$ to $h(D)$. Then, in $G_1'$ the set $D \cup P \cup \{v_1, v_2, \dots, v_m\}$ is either an eye mask or a handcuff; in $G_2'$ the vertex $v_m$ is not safe; in $G_3'$ the set $D \cup P \cup \{v_1, v_2, \dots, v_m\} \cup \{c_0, c_1, \dots, c_k\}$ is a handcuff. We get a contradiction in each case. Therefore, $Z$ is a consistent set of safe vertices in $G$. Next, let $S$ be a strong stable set in $G$ with $Z \subseteq S$. Note that $v_0 \in S$ since $v_0 \in Z$. Let $S_1' = S \cup \{v_2, v_4, \dots, v_{m-1}\}$, $S_2' = S \cup \{v_2, v_4, \dots, v_m\}$, $S_3' = S \cup \{v_2, v_4, \dots, v_m, c_2, c_4, \dots, c_k\}$. Then, $S_i'$ is a strong stable set in $G_i'$ with $Z' \subseteq S_i'$.
\end{proof}

Next, we use Lemma \ref{lem:heart_of_it_all} to prove that Theorem \ref{thm:weaker} and Theorem \ref{thm:stronger} are equivalent.

\begin{lemma}
\label{lem:weaker_stronger_same}
The following statements are equivalent:

\vspace{-0.15cm}

\begin{enumerate}
\itemsep-0.1em
\item \label{weaker} Every claw-free innocent graph has a strong stable set.
\item \label{stronger} Let $G$ be a claw-free innocent graph and let $Z$ be a consistent set of safe vertices in $G$. Then $G$ has a strong stable set that contains $Z$.
\end{enumerate}
\end{lemma}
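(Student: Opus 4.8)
Statement~\ref{stronger} implies statement~\ref{weaker} at once, since the empty set is vacuously a consistent set of safe vertices, so \ref{stronger} applied with $Z=\emptyset$ yields \ref{weaker}. For the converse I would assume \ref{weaker} and prove \ref{stronger} by induction on $|Z|$, stripping off one vertex of $Z$ at a time by means of the first construction of Lemma~\ref{lem:heart_of_it_all}.

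Assume \ref{weaker}, let $G$ be claw-free and innocent, and let $Z$ be a consistent set of safe vertices in $G$; if $Z=\emptyset$ we are done by \ref{weaker}, so fix $v_0\in Z$. Let $G'$ be the graph $G_1'$ of Lemma~\ref{lem:heart_of_it_all} built from $G$ at $v_0$ with $m=3$, so that $G'$ is $G$ together with a square $v_0\d v_1\d v_2\d v_3\d v_0$ in which $v_1$ is complete to $N_G[v_0]$ and $v_2,v_3$ are anticomplete to $V(G)\setminus\{v_0\}$. As $v_0$ is safe in $G$, part~\ref{it:first} of Lemma~\ref{lem:heart_of_it_all} gives that $G'$ is claw-free and innocent. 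The crux, treated in the final paragraph, is that $Z\setminus\{v_0\}$ is a consistent set of safe vertices in $G'$. Granting it, the induction hypothesis applied to $G'$ and $Z\setminus\{v_0\}$ (of size $|Z|-1$) produces a strong stable set $S'$ of $G'$ with $Z\setminus\{v_0\}\subseteq S'$, which I would then convert to a strong stable set of $G$ containing $Z$ as follows. Since $v_1,v_2,v_3$ are attached to $V(G)$ only along the clique $N_G[v_0]$, every maximal clique of $G$ other than $N_G[v_0]$ is still a maximal clique of $G'$ and so is met by $S'$; and $N_G[v_0]$ is the unique maximal clique of $G$ through $v_0$. If $v_0\in S'$, set $S:=S'\cap V(G)$. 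If $v_0\notin S'$, then $\{v_0,v_3\}$ and $\{v_1,v_2\}$ are maximal cliques of $G'$, and meeting them forces first $v_3\in S'$ and then $v_1\in S'$; since $v_1$ is complete to $N_G[v_0]$ this gives $S'\cap N_G[v_0]=\emptyset$, so $S:=(S'\cap V(G))\cup\{v_0\}$ is stable. In either case $S$ is a stable set meeting $N_G[v_0]$ at $v_0$ and meeting every other maximal clique of $G$, and $Z\subseteq S$; this closes the induction.

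The step I expect to be hardest is verifying that $Z\setminus\{v_0\}$ is a consistent set of safe vertices in $G'$. Each $z\in Z\setminus\{v_0\}$ is non-adjacent to $v_0$ (as $Z$ is stable), so $N_{G'}(z)=N_G(z)$ and $z$ remains simplicial in $G'$. Moreover no induced path of $G'$ joining two vertices of $V(G)\setminus\{v_0\}$ passes through the square $\{v_0,v_1,v_2,v_3\}$: that square meets the rest of $G'$ only along $\{v_0,v_1\}$, both of whose members are complete to the clique $N_G(v_0)$, so a path passing through it would have a chord inside $N_G(v_0)$, which is absurd. Hence the induced paths of $G'$ between vertices of $V(G)\setminus\{v_0\}$ are exactly those of $G$, and consistency transfers. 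For safety one must classify the clowns of $G'$. A clown $D$ of $G$ is still a clown of $G'$; a valid path of $G'$ from such a $z$ to $h(D)$ runs between two vertices of $V(G)\setminus\{v_0\}$ (its far end is a hat, hence not the safe vertex $v_0$, and it is not $z$ either), so by the previous remark it lies in $V(G)$ and is therefore odd because $z$ is safe in $G$. The only hole of $G'$ meeting $\{v_1,v_2,v_3\}$ is $C_0:=v_0\d v_1\d v_2\d v_3\d v_0$; neither $v_2$ nor $v_3$ is the hat of any clown of $G'$; $v_0$ is not the hat of a clown of $G$; and $v_1$ cannot be the hat of a hole $C'\subseteq V(G)$, since that would force $|N_G[v_0]\cap V(C')|\ge 3$ when $v_0\in V(C')$ and would make $v_0$ a hat of $C'$ when $v_0\notin V(C')$, both impossible. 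So the clowns of $G'$ that are not clowns of $G$ are precisely those with hole $C_0$ and hat some $u\in N_G(v_0)$. For such a clown, a valid path $P$ from $z$ to $u$ has $V(P)\subseteq V(G)$ with $V(P)\setminus\{u\}$ anticomplete to $v_0$ (this is part of the defining condition), so following $P$ by the edge $uv_0$ gives an induced path of $G$ from $z$ to $v_0$; it is even because $\{z,v_0\}\subseteq Z$ is an even pair, whence $P$ is odd, as wanted. This verifies the claim, and the induction then shows that \ref{weaker} implies \ref{stronger}.
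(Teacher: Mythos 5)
Your proposal is correct and follows essentially the same route as the paper: both attach the square gadget of Lemma~\ref{lem:heart_of_it_all} (the construction $G_1'$ with $m=3$, one new vertex complete to $N_G[z]$) at the safe vertices and then invoke statement~\ref{weaker} on the enlarged graph. The only differences are presentational: the paper adds all gadgets simultaneously and recovers the strong stable set via the maximal cliques of the gadget together with a twins argument, whereas you peel off one vertex of $Z$ at a time by induction and verify in detail (correctly) the preservation of consistency and safety that the paper asserts only in a parenthetical remark.
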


\begin{proof}
Clearly, \ref{stronger} implies \ref{weaker}. Now, assume \ref{weaker} holds. Let $Z = \{z_1, \dots, z_m\}$ be a consistent set of safe vertices in $G$, and let $I = \{1,\dots,m\}$ denote the index set. Let $G'$ be the graph obtained from $G$ by adding three vertices $w_i, x_i, y_i$ for each $z_i$ such that $w_i$ is complete to $N_G[z_i] \cup \{x_i\}$, and $y_i$ is adjacent to $z_i$ and $x_i$. By successive applications of Lemma \ref{lem:heart_of_it_all}, $G'$ is claw-free and innocent. (Note here that after one application of Lemma \ref{lem:heart_of_it_all}, say for $z_1$, the set $Z \setminus \{z_1\}$ is a consistent set of safe vertices in the new graph since $Z$ is a consistent set in $G$.) Therefore, by \ref{weaker}, $G'$ has a strong stable set $S'$. For each $i \in I$, since $z_iy_i$, $y_ix_i$, and $x_iw_i$ are maximal cliques of $G'$, either $\{x_i, z_i\} \subseteq S'$ or $\{y_i, w_i\} \subseteq S'$. We observe that the set $S = S' \setminus \bigcup\limits_{i\in I} \{x_i, y_i\}$ is a strong stable set of $G' \setminus \bigcup\limits_{i\in I} \{x_i, y_i\}$. Since $z_i$ and $w_i$ are twins in $G' \setminus \bigcup\limits_{i\in I} \{x_i, y_i\}$, it follows that $G$ has a strong stable set containing $Z$.
\end{proof}

Next, we prove several lemmas about cobipartite graphs and peculiar graphs. We use the following well-known result several times.

\begin{theorem}\label{thm:chordal_bip}\cite{Golumbic_Ross}
Bipartite graphs with no hole of length at least six contain a simplicial edge.
\end{theorem}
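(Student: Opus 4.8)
This is a classical result (Golumbic–Ross), so the plan is to give a short self-contained argument in the spirit of the rest of the paper. Let $H = (X, Y)$ be a bipartite graph with no hole of length at least six; I want to exhibit a simplicial edge, i.e.\ an edge $xy$ with $N(x)$ complete to $N(y)$. Since $H$ is bipartite, $N(x) \subseteq Y$ and $N(y) \subseteq X$, so ``$N(x)$ complete to $N(y)$'' means precisely that for every $y' \in N(x)$ and every $x' \in N(y)$, $x'y' \in E(H)$; equivalently, the neighbourhoods $N(x)$ and $N(y)$ induce a complete bipartite graph together with the edge $xy$. The natural reformulation is via the \emph{incidence structure}: edges of $H$ correspond to vertices of $L(H)$, and I want a vertex of $L(H)$ whose closed neighbourhood is a clique — but it is cleaner to argue directly on $H$.

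First I would reduce to the case that $H$ is connected (otherwise work in a component) and has at least one edge. The key notion is that of a \emph{chordal bipartite} graph: a bipartite graph in which every cycle of length at least six has a chord. Our hypothesis ``no induced cycle of length at least six'' is exactly chordal bipartiteness. I would then invoke (or reprove) the standard structural fact that every chordal bipartite graph with an edge has an edge $xy$ such that $N(x) \subseteq N[y]$ \emph{in the bipartite sense}, namely $N_H(x) \subseteq N_H(y')$ for a suitable configuration — more precisely, a \emph{bisimplicial} edge, whose existence is the content of the Golumbic–Ross theorem. To make the excerpt self-contained, the cleanest route is induction on $|V(H)|$: pick any edge $xy$; if it is simplicial we are done; otherwise there exist $x' \in N(y) \setminus N(x)$ wait — I must be careful, let me instead use the following induction. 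Take a vertex $v$ of minimum degree in $H$; consider $H' = H \setminus v$, which is again bipartite with no long hole, so by induction it has a simplicial edge $ab$; if $v$ is non-adjacent to both $a$ and $b$, or if adding $v$ back does not destroy simpliciality of $ab$, we are done; the remaining case forces a long induced cycle through $v$, contradiction.

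The step I expect to be the genuine obstacle is making that last ``remaining case forces a long hole'' argument airtight, because when we delete $v$ and re-add it, $v$ may be adjacent to one endpoint of $ab$, say $a$, and to some vertex $b'$ with $b' b \notin E$, which is exactly how simpliciality of $ab$ can fail; one then has to trace a path from $b'$ back to $b$ avoiding $a$ inside $N(a)$'s side and close it up through $v$ and $a$ into an induced cycle of length $\geq 6$. Controlling induced-ness of that cycle (no chords) is the delicate point and is precisely where chordal-bipartiteness is used in a non-trivial way. An alternative that sidesteps this is to cite the result as stated — it is labelled as a known theorem with reference \cite{Golumbic_Ross} — and simply record that its proof is standard; given that the paper already treats it as a quoted external result, the honest ``proof'' here may be just a pointer to \cite{Golumbic_Ross}, with the remark that chordal bipartite graphs are exactly the bipartite graphs with no induced cycle of length $\geq 6$ and that every such graph with an edge contains a bisimplicial (hence simplicial) edge.
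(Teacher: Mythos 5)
The paper does not prove this statement at all: it is quoted as an external result, with the proof delegated entirely to \cite{Golumbic_Ross}, so your fallback option (record it as a known theorem about chordal bipartite graphs and point to the reference) is exactly what the paper does, and is perfectly adequate here.

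However, your attempted self-contained argument is not a proof and contains a genuine gap, which you yourself flag. The induction ``delete a minimum-degree vertex $v$, take a simplicial edge $ab$ of $H\setminus v$, and either it survives or we find a long hole'' does not go through as stated: when $v$ is adjacent to $a$ and to some $b'$ with $b'b\notin E$, there is no canonical induced path from $b'$ to $b$ whose closure through $v$ and $a$ is chordless, and nothing in the argument uses the minimality of $\deg(v)$; indeed a simplicial edge of $H\setminus v$ can fail in $H$ in ways that do not produce any hole of length at least six, so the ``remaining case'' cannot simply be contradicted --- one would instead have to find a \emph{different} simplicial edge of $H$, and that is the entire content of the theorem. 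The known proofs proceed quite differently: Golumbic and Goss argue via the structure of minimal edge separators of chordal bipartite graphs (which induce complete bipartite subgraphs), and the modern short proofs use a doubly lexical ordering of the bipartite adjacency matrix, whose $\Gamma$-freeness hands you a bisimplicial edge directly. A minor additional caveat: as stated the theorem needs the graph to have at least one edge (otherwise it is vacuous), which you note but the statement in the paper leaves implicit. In short: either cite \cite{Golumbic_Ross} as the paper does, or be prepared to reproduce one of these genuinely different arguments; the vertex-deletion induction you sketch does not close.
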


\begin{lemma}
\label{lem:simplicial_edges}
Let $G$ be a cobipartite graph $(A, B)$ and let $u \in A$, $v \in B$ be such that $uv$ is a cosimplicial non-edge. Then $\{u, v\}$ is a strong stable set in $G$.
\end{lemma}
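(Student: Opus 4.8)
The plan is to fix an arbitrary maximal clique $K$ of $G$ and show that $K$ contains $u$ or $v$; since $u$ and $v$ are non-adjacent (they form an edge of $G^C$), this immediately gives that $\{u,v\}$ is a strong stable set.

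First I would translate the hypothesis into a statement about $G$ itself. Because $A$ and $B$ are cliques, $G^C$ is bipartite with parts $A$ and $B$, so $N_{G^C}(u)\subseteq B$ and $N_{G^C}(v)\subseteq A$; in particular these two sets are disjoint, so saying ``$uv$ is a simplicial edge of $G^C$'' is meaningful and says precisely: whenever $a\in A$ is non-adjacent to $v$ in $G$ and $b\in B$ is non-adjacent to $u$ in $G$, then $a$ and $b$ are non-adjacent in $G$. This is the only consequence of the hypothesis that the proof will use.

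Next I would dispose of the degenerate cases. If $K\subseteq A$, then since $A$ is a clique, $A$ is a clique containing $K$, so maximality forces $K=A$, hence $u\in K$; symmetrically, if $K\subseteq B$ then $K=B$ and $v\in K$. So we may assume both $K\cap A$ and $K\cap B$ are non-empty, and, for a contradiction, that $u\notin K$ and $v\notin K$. Since $u\in A$ and $A$ is a clique, $u$ is complete to $K\cap A$; as $u\notin K$ and $K$ is a maximal clique, $K\cup\{u\}$ is not a clique, so $u$ is non-adjacent to some $b\in K\cap B$. Symmetrically, $v$ is complete to $K\cap B$ but $v\notin K$, so $v$ is non-adjacent to some $a\in K\cap A$. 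Now $a\in A$ is non-adjacent to $v$ and $b\in B$ is non-adjacent to $u$, so by the translated cosimplicial condition $a$ and $b$ are non-adjacent in $G$; but $a,b\in K$ and $K$ is a clique, a contradiction. Hence $u\in K$ or $v\in K$ for every maximal clique $K$, which proves the lemma.

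I do not expect a serious obstacle here: the lemma is essentially a direct unpacking of the definitions. The only point requiring a small amount of care is the translation in the second paragraph — in particular, noting that $N_{G^C}(u)$ and $N_{G^C}(v)$ are disjoint (so that ``complete to'' is the right notion), which is immediate from the cobipartite structure.
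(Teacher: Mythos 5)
Your proposal is correct and follows essentially the same route as the paper: given a maximal clique avoiding $\{u,v\}$, maximality yields a non-neighbor of $u$ and a non-neighbor of $v$ inside the clique, which necessarily lie in $B$ and $A$ respectively, and the cosimplicial condition then forces these two vertices to be non-adjacent, a contradiction. Your extra handling of the cases $K\subseteq A$ or $K\subseteq B$ and the explicit translation of the cosimplicial hypothesis are harmless elaborations of the same argument.
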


\begin{proof}
Suppose $C$ is a maximal clique in $G$ such that $C \cap \{u,v\} = \emptyset$. Since $C$ is maximal, there exist $c_u \in C \setminus N[u]$ and $c_v \in C \setminus N[v]$. Since $G$ is cobipartite, $c_u \neq c_v$. As $uv$ is a cosimplicial non-edge, $c_u, c_v$ are non-adjacent, contrary to the fact that $C$ is a clique.
\end{proof}

The following lemma provides some well-known properties of bipartite chain graphs \cite{HammerPS}. However, since the proof is simple, we include it for the sake of completeness.

\begin{lemma}
\label{lem:no_C4_order}
Let $G$ be a graph and let $A, B$ be two disjoint sets of vertices in $G$ such that for every $a, a' \in A$ and $b, b' \in B$, if $ab, a'b'$ are edges, then at least one of $ab', a'b$ is an edge. Then, one can order the vertices of $A$, say $a_1, \dots, a_p$, in such a way that if $i \leq j$, then $N(a_i) \cap B \subseteq N(a_j) \cap B$.
Moreover, either some vertex of $A$ is complete to $B$ or some vertex of $B$ is anticomplete to $A$. 
\end{lemma}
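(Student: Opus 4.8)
The plan is to show first that the sets $N(a)\cap B$, for $a\in A$, are pairwise comparable under inclusion, and then to order $A$ so that these sets are non-decreasing; the ``moreover'' part will then fall out of the top of this order.

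First I would prove the \emph{chain claim}: for any $a,a'\in A$, either $N(a)\cap B\subseteq N(a')\cap B$ or $N(a')\cap B\subseteq N(a)\cap B$. Suppose not. Then there exist $b\in (N(a)\cap B)\setminus N(a')$ and $b'\in (N(a')\cap B)\setminus N(a)$; note $b\neq b'$ since $b\in N(a)$ while $b'\notin N(a)$. Now $ab$ and $a'b'$ are edges, while neither $ab'$ nor $a'b$ is an edge, contradicting the hypothesis on $(A,B)$. Hence the family $\{N(a)\cap B : a\in A\}$ is totally ordered by inclusion.

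Next, using the chain claim I would enumerate $A$ as $a_1,\dots,a_p$ so that $|N(a_1)\cap B|\le\dots\le|N(a_p)\cap B|$, breaking ties arbitrarily. If $i\le j$, then $|N(a_i)\cap B|\le|N(a_j)\cap B|$, and by the chain claim the two sets are comparable, so $N(a_i)\cap B\subseteq N(a_j)\cap B$; this is exactly the required ordering. For the moreover statement one may assume $A\neq\emptyset$ and $B\neq\emptyset$ (if $B=\emptyset$ every vertex of $A$ is vacuously complete to $B$, and if $A=\emptyset$ every vertex of $B$ is vacuously anticomplete to $A$). Consider $a_p$, whose neighborhood $N(a_p)\cap B$ contains $N(a_i)\cap B$ for every $i$. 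If $a_p$ is complete to $B$ we are done; otherwise pick $b\in B\setminus N(a_p)$, and then $b\notin N(a_i)$ for every $i$, i.e.\ $b$ is anticomplete to $A$, as required.

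The argument has no real obstacle: the whole content is the chain claim, a two-line consequence of the $2K_2$-freeness of the bipartite graph between $A$ and $B$. The only points requiring a little care are making the tie-breaking in the enumeration consistent with inclusion — which is immediate from the chain claim — and handling the degenerate cases $A=\emptyset$ or $B=\emptyset$ in the ``moreover'' part, as noted above.
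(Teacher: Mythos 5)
Your proof is correct and follows essentially the same route as the paper: establish that the sets $N(a)\cap B$ form a chain under inclusion via the $2K_2$-type contradiction, order $A$ accordingly, and read off the ``moreover'' statement from the maximal element $a_p$. The extra remarks about tie-breaking and empty $A$ or $B$ are harmless additions but not needed.
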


\begin{proof}
Let $u, v \in A$ and assume that $N(u) \cap B \not\subseteq N(v) \cap B$ and $N(v) \cap B \not\subseteq N(u) \cap B$. Then, there are vertices $x, y \in B$ such that $xu, yv$ are edges, and $xv, yu$ are non-edges, a contradiction. Next, assume that no vertex of $A$ is complete to $B$. Then, in particular, there is a vertex $b \in B$ non-adjacent to $a_p$. But since $N(a) \cap B \subseteq N(a_p) \cap B$ for every $a \in A$, it follows that $b$ is anticomplete to $A$.
\end{proof}

\begin{lemma}
\label{lem:cobipartite}
Let $G$ be an innocent cobipartite graph $(A, B)$. Let $Z$ be a consistent set of safe vertices in $G$. Then $G$ has a strong stable set that contains $Z$.
\end{lemma}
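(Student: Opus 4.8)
The plan is to induct on $|V(G)|$, splitting according to $|Z|$. Since $Z$ is consistent it is stable, and as $A$ and $B$ are cliques we get $|Z| \le 2$. Throughout I would use that $G^C$ is bipartite with no hole of length at least six (as $G$ is cobipartite and innocent), so that Theorem~\ref{thm:chordal_bip} applies to $G^C$ and to each of its induced subgraphs.

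The cases $|Z| \le 2$ with $|Z| \ne 1$ should be short. If $Z = \emptyset$: when $G$ is complete any single vertex is a strong stable set, and otherwise $G^C$ has a simplicial edge $uv$ by Theorem~\ref{thm:chordal_bip} (with $u \in A$ and $v \in B$, as $A$ and $B$ are stable in $G^C$), so $uv$ is a cosimplicial non-edge of $G$ and $\{u,v\}$ is strong by Lemma~\ref{lem:simplicial_edges}. If $|Z| = 2$, say $Z = \{z_1, z_2\}$ with $z_1 \in A$ and $z_2 \in B$, I would show $Z$ itself is strong: a maximal clique $C$ missing both would have to meet $A$ and $B$ (else $C$ equals $A$ or $B$), and by maximality would contain some $c \in C \cap B$ non-adjacent to $z_1$ and some $c' \in C \cap A$ non-adjacent to $z_2$; then $z_1 \d c' \d c \d z_2$ is an induced path of odd length, contradicting that $\{z_1, z_2\}$ is an even pair.

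The real work is the case $|Z| = 1$, say $Z = \{z\}$ with $z \in A$. First I would dispose of universal vertices: if $z$ is universal then $\{z\}$ is strong; if some other vertex $u$ is universal, I would recurse on $G \setminus u$ (still innocent and cobipartite, with $z$ still safe by Observation~\ref{obs:consistent_safe}) and lift the resulting strong stable set back to $G$, using that every maximal clique of $G$ contains $u$ and restricts to a maximal clique of $G \setminus u$. So I may assume $G$ has no universal vertex. Since $z$ is safe it is simplicial, and then $N_G(z) \cap B = \emptyset$ — a vertex there would be complete to $A \setminus z$ and hence universal — so $z$ is anticomplete to $B$, i.e.\ complete to $B$ in $G^C$; and if $B = \emptyset$ then $z$ is universal, already handled, so assume $B \ne \emptyset$.

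The crux, and where safeness is essential, is the observation that there is no induced $2K_2$ of $G^C$ across the pair $(A \setminus z,\, B)$: four such vertices $x,x' \in A\setminus z$, $y,y' \in B$ with $xy,x'y' \in E(G^C)$ and $xy',x'y \notin E(G^C)$ would induce in $G$ a $C_4$ (namely $x \d x' \d y \d y' \d x$) exactly two consecutive vertices of which, $x$ and $x'$, are adjacent to $z$, making $\{z,x,x',y,y'\}$ a clown with hat $z$ — impossible for a safe vertex. Given this, Lemma~\ref{lem:no_C4_order} applied to $G^C$ with the sets $A \setminus z$ and $B$ yields one of two cases: either a vertex $b^* \in B$ complete to $A \setminus z$ in $G$, in which case $b^*$ is universal in $G \setminus z$ and $\{z, b^*\}$ is checked to be strong in $G$ (it is stable since $z$ is anticomplete to $B$, and a maximal clique of $G$ avoiding $z$ is a maximal clique of $G\setminus z$ and hence contains $b^*$); or a vertex $a^* \in A \setminus z$ anticomplete to $B$ in $G$, in which case $N_G(a^*)=A\setminus a^*=\big(N_G(z)\setminus\{a^*\}\big)\cup\{z\}$, so $z$ and $a^*$ are twins, every maximal clique of $G$ contains both or neither of them, and recursing on $G \setminus a^*$ and lifting (via the twin property, exactly as in the universal-vertex reduction) finishes the proof. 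The main obstacle I anticipate is precisely this last observation: one must see that the only way to block $\{z\}$ from extending to a strong stable set is for some $C_4$ to sit across $(A\setminus z,\,B)$ in the ``right'' position relative to $z$, i.e.\ a clown at $z$; once that is isolated, Lemma~\ref{lem:no_C4_order} together with the twin and universal-vertex reductions makes the rest routine.
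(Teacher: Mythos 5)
Your proof is correct and rests on the same two pillars as the paper's argument: safeness of $z$ rules out any square meeting $N(z)$ (your clown-with-hat-$z$ observation is exactly the paper's use of Observation \ref{obs:path_to_hole}), and Lemma \ref{lem:no_C4_order} then supplies the extremal vertex, with the $|Z|=0$ and $|Z|=2$ cases handled just as in the paper. The only difference is packaging: where the paper takes the vertex of $B$ non-adjacent to $z$ with $\subseteq$-maximum neighbourhood in $A$ and finishes at once via a cosimplicial non-edge and Lemma \ref{lem:simplicial_edges}, you extract the same nested-neighbourhood information through an induction on $|V(G)|$ with universal-vertex and twin reductions.
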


\begin{proof}
Since $G$ is cobipartite, it is claw-free. We may assume that $G$ is not a complete graph since otherwise $|Z| \leq 1$, and every vertex of $G$ is a strong stable set in $G$. Also, $|Z| \leq 2$ as $A$ and $B$ are cliques. By Lemma \ref{lem:simplicial_edges}, it is enough to show that there is a cosimplicial non-edge $ab$ in $G$ with $Z \subseteq \{a,b\}$. Note that since $G$ is innocent, $G$ contains no antihole of length at least six. So, if $Z = \emptyset$, then by Theorem \ref{thm:chordal_bip}, there is a cosimplicial non-edge $ab$ in $G$ with $Z \subseteq \{a,b\}$. Hence, we may assume that $Z \neq \emptyset$. Assume first that $|Z| = 2$. Let $Z \cap A = \{a\}$ and $Z \cap B = \{b\}$. Then, $ab$ is a cosimplicial non-edge, for otherwise there is a three-edge path in $G$ from $a$ to $b$, contrary to the fact that $Z$ is consistent. Next, assume that $|Z| = 1$. Without loss of generality, assume that $Z \cap A \neq \emptyset$ and let $Z \cap A = \{a\}$. Let $B_1, B_2$ be the sets of neighbors and the non-neighbors of $a$ in $B$, respectively. Then $B_1$ is complete to $A$ since $a$ is simplicial, and $B_2 \neq \emptyset$ as $G$ is not a complete graph. Moreover, by Observation \ref{obs:path_to_hole}, there is no $C_4$ in $G | (B_2 \cup A)$, and so by Lemma \ref{lem:no_C4_order}, we can order the vertices of $B_2$, say $b_1, b_2, \dots, b_p$, in such a way that if $i \leq j$, then $N(b_i) \cap A \subseteq N(b_j) \cap A$. Let $b \in B_2$ be such that $N(b') \cap A \subseteq N(b) \cap A$ for all $b' \in B_2$. Now, $ab$ is a cosimplicial non-edge in $G$, for otherwise $N(b_2) \cap A \not\subseteq N(b) \cap A$ for some $b_2 \in B_2$, a contradiction.
\end{proof}

The following follows from \cite{Chvatal1} and \cite{Chvatal2}, where it is proven that every perfectly orderable graph is strongly perfect, and that a peculiar graph is perfectly orderable, respectively. However, we prove a weaker statement and include its proof for the sake of completeness.

\begin{lemma}
\label{lem:peculiar}
Let $G$ be a peculiar graph containing no antihole of length at least six, then $G$ has a strong stable set.
\end{lemma}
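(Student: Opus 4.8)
The plan is to describe the maximal cliques of a peculiar graph $G$ explicitly and then exhibit a strong stable set consisting of just two vertices. I would first fix notation: let $X_1, X_2, X_3$ denote the induced subgraphs of $G$ with vertex sets $A_1 \cup B_2$, $A_2 \cup B_3$, $A_3 \cup B_1$, that is, the three cobipartite graphs used to build $G$. By construction $X_1, X_2, X_3$ are pairwise complete and none of them is a complete graph, which forces each of $A_1, A_2, A_3, B_1, B_2, B_3$ to be nonempty (otherwise some $X_i$ would be a clique). I also record that $K_i$ is complete to $A_j \cup B_j$ for $j \neq i$, anticomplete to $A_i \cup B_i$, and anticomplete to $K_j$ for $j \neq i$.

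The first real step is to classify the maximal cliques of $G$. Since $K_1, K_2, K_3$ are pairwise anticomplete, a maximal clique $C$ meets at most one of them. If $C$ meets $K_1$, then $C$ avoids $A_1 \cup B_1 \cup K_2 \cup K_3$ and hence lies in $K_1 \cup V(X_2) \cup A_3 \cup B_2$; here $K_1 \cup A_3 \cup B_2$ is a clique (because $A_3 \subseteq V(X_3)$ and $B_2 \subseteq V(X_1)$ are complete to each other and $K_1$ is complete to both) that is complete to $V(X_2)$, so $C$ has the form $K_1 \cup A_3 \cup B_2 \cup D$ for some maximal clique $D$ of $X_2$. Using the threefold cyclic symmetry $i \mapsto i+1$ of the construction, the maximal cliques meeting $K_2$ are the sets $K_2 \cup A_1 \cup B_3 \cup D$ with $D$ maximal in $X_3$, and those meeting $K_3$ are the sets $K_3 \cup A_2 \cup B_1 \cup D$ with $D$ maximal in $X_1$. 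Finally, a maximal clique meeting no $K_i$ is contained in $V(X_1) \cup V(X_2) \cup V(X_3)$, and since $X_1, X_2, X_3$ are pairwise complete it has the form $D_1 \cup D_2 \cup D_3$ with each $D_i$ maximal in $X_i$.

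The second step is to produce the strong stable set. As an induced subgraph of $G$, the graph $X_2$ has no antihole of length at least six, so its complement $X_2^C$ is bipartite with sides $A_2$ and $B_3$ and has no hole of length at least six; and since $X_2$ is not complete, $X_2^C$ has an edge. By Theorem \ref{thm:chordal_bip}, $X_2^C$ then has a simplicial edge, necessarily of the form $uw$ with $u \in A_2$ and $w \in B_3$. Thus $uw$ is a cosimplicial non-edge of $X_2$, so by Lemma \ref{lem:simplicial_edges} the set $\{u, w\}$ is a strong stable set of $X_2$; in particular $u$ and $w$ are non-adjacent in $G$. Now I would check that $\{u, w\}$ meets every maximal clique $C$ of $G$ using the classification: if $C$ meets $K_1$ or meets no $K_i$, then $C$ contains a maximal clique of $X_2$ and hence contains $u$ or $w$; if $C$ meets $K_2$, then $C \supseteq B_3 \ni w$; and if $C$ meets $K_3$, then $C \supseteq A_2 \ni u$. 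Hence $\{u, w\}$ is a strong stable set of $G$.

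The only genuine work is the clique classification; once it is in place, the conclusion drops out with no further effort. The step that needs care is the cyclic bookkeeping — seeing that attaching $K_1$ destroys the cobipartite structure of $X_1$ and of $X_3$ but leaves $X_2$ intact, and symmetrically for $K_2, K_3$ — together with the small observation that sets such as $A_3 \cup B_2$ are cliques, which is exactly the place where one uses that the three cobipartite graphs are pairwise complete.
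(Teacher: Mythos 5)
Your proposal is correct and takes essentially the same route as the paper: both extract a cosimplicial non-edge of one of the three constituent cobipartite graphs via Theorem \ref{thm:chordal_bip} (you use $(A_2,B_3)$, the paper uses $(A_1,B_2)$, which is the same choice up to the cyclic symmetry) and then show, using Lemma \ref{lem:simplicial_edges}, that this pair meets every maximal clique of $G$. The only difference is cosmetic: you verify the strongness by explicitly classifying the maximal cliques of a peculiar graph, whereas the paper argues directly by contradiction about a hypothetical maximal clique avoiding the pair.
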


\begin{proof}
We use the same notation as in the definition of a peculiar graph. Recall that $\tilde G = G | (A_1 \cup B_2)$ is a cobipartite graph. As $G$ contains no antihole of length at least six, by Theorem \ref{thm:chordal_bip}, $\tilde G$ contains a cosimplicial non-edge $uv$ with $u \in A_1$, $v \in B_2$. We claim that $\{u, v\}$ is a strong stable set in $G$. Suppose not and let $C$ be a maximal clique such that $C \cap \{u,v\} = \emptyset$. As $C$ is maximal, $u$ has non-neighbors in $C$, and so $C \cap (B_2 \cup K_1) \neq \emptyset$. Similarly, $C \cap (A_1 \cup K_2) \neq \emptyset$. Now, since $K_1$ is anticomplete to $A_1 \cup K_2$, it follows that $C \cap K_1 = \emptyset$. Similarly, $C \cap K_2 = \emptyset$. Therefore, $C \cap A_1 \neq \emptyset$ and  $C \cap B_2 \neq \emptyset$. Note that since $C \cap (K_1 \cup K_2) = \emptyset$ and $v$ is complete to $(B_1 \cup B_3 \cup A_2 \cup A_3 \cup K_3)$, it follows that $C \cap A_1$ contains a non-neighbor of $v$. Similarly $C \cap B_2$ contains a non-neighbor of $u$. Let $c_v \in C \cap A_1$ be a non-neighbor of $v$, and $c_u \in C \cap B_2$ be a non-neighbor of $u$. Then $c_v, c_u$ are non-adjacent since $uv$ is a cosimplicial non-edge in $\tilde G$, contrary to the fact that $C$ is a clique.
\end{proof}

\begin{lemma}
\label{lem:peculiar_no_simp_clique}
A peculiar graph $G$ does not have a simplicial clique.
\end{lemma}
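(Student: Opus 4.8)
The plan is a proof by contradiction. Suppose the peculiar graph $G$ — written with the nine non-empty sets $A_1,A_2,A_3,B_1,B_2,B_3,K_1,K_2,K_3$ and non-complete cobipartite pairs $(A_1,B_2),(A_2,B_3),(A_3,B_1)$ as in the definition — has a simplicial clique $Q$. The structural facts I would use repeatedly are: inside $\bigcup_i(A_i\cup B_i)$ the only non-edges lie within the three cobipartite pairs; each $K_i$ is anticomplete to $A_i\cup B_i$ and to each $K_j$ ($j\neq i$) and complete to $A_j\cup B_j$ ($j\neq i$); hence $N[k]=K_i\cup\bigcup_{j\neq i}(A_j\cup B_j)$ for $k\in K_i$. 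If $Q\subseteq K_1\cup K_2\cup K_3$, then (the $K_i$ being pairwise anticomplete) $Q\subseteq K_i$ for some $i$, say $i=1$; but then $A_2\cup B_3\subseteq N(q)\setminus Q$ for every $q\in Q$, and this set contains a non-edge since $(A_2,B_3)$ is non-complete — contradicting that $N(q)\setminus Q$ is a clique. So $Q$ meets $\bigcup_i(A_i\cup B_i)$, and by the symmetry of the definition (the cyclic shift of the indices, together with the reflection interchanging the two parts of each cobipartite pair and transposing $K_2,K_3$) I may assume that some $q\in Q$ lies in $A_1$.

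The key tool is the observation: if $v\in Q$ and $S,T$ are non-empty sets with $S\cup T\subseteq N(v)$ and $S$ anticomplete to $T$, then $S\subseteq Q$ or $T\subseteq Q$ — otherwise some $s\in S\setminus Q$ and $t\in T\setminus Q$ would form a non-edge inside the clique $N(v)\setminus Q$. Applying this at $v=q\in A_1$, using that $A_1$ is complete to $A_2,A_3,B_1,B_3$ and to $K_2,K_3$, while $K_2$ is anticomplete to $A_2$ and to $K_3$, and $K_3$ is anticomplete to $A_3$ and to $B_3$, it yields: $K_2\subseteq Q$ or $K_3\subseteq Q$; and also $K_2\subseteq Q$ or $A_2\subseteq Q$; $K_3\subseteq Q$ or $A_3\subseteq Q$; $K_3\subseteq Q$ or $B_3\subseteq Q$.

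It then remains to rule out $K_2\subseteq Q$ and $K_3\subseteq Q$. Suppose $K_2\subseteq Q$ and pick $k\in K_2\cap Q$. From $Q\subseteq N[k]=K_2\cup A_1\cup B_1\cup A_3\cup B_3$ we get $Q\cap K_1=Q\cap K_3=\emptyset$, so (the $K_i$ being non-empty) the alternatives above force $A_3\subseteq Q$ and $B_3\subseteq Q$; applying the observation at a vertex of $B_3\cap Q$ with $S=K_1$ (non-empty, disjoint from $Q$, anticomplete to $B_1$, and contained in that vertex's neighborhood) and $T=B_1$ gives $B_1\subseteq Q$, and then $A_3,B_1\subseteq Q$ contradicts that $(A_3,B_1)$ is non-complete. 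Suppose instead $K_3\subseteq Q$ and pick $k\in K_3\cap Q$. From $Q\subseteq N[k]=K_3\cup A_1\cup B_1\cup A_2\cup B_2$ we get $Q\cap K_1=Q\cap K_2=\emptyset$, forcing $A_2\subseteq Q$; the observation at a vertex of $A_2\cap Q$ with $S=K_1$ (anticomplete to $A_1\cup B_1$) forces $A_1\subseteq Q$ and $B_1\subseteq Q$; then at a vertex of $B_1\cap Q$ with $S=K_2$ (anticomplete to $B_2$) it forces $B_2\subseteq Q$; and $A_1,B_2\subseteq Q$ contradicts that $(A_1,B_2)$ is non-complete. Either way we reach a contradiction, which proves the lemma.

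I expect the only real friction to be the adjacency bookkeeping — being careful about which of the $A_i,B_i,K_i$ are complete or anticomplete to which — and cleanly invoking the reflection symmetry used to reduce to $q\in A_1$; alternatively one can simply rerun the last case analysis with the roles of the $A$'s and $B$'s interchanged to handle $q\in B_1$. It is worth flagging that the non-emptiness of the $K_i$ is indispensable: were the $K_i$ allowed to be empty, the octahedron $K_{2,2,2}$ would be a peculiar graph and $\{a_1,a_2,a_3\}$ (one vertex from each $A_i$) a simplicial clique, so the statement would fail.
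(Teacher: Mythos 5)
Your proof is correct and is essentially the paper's argument: both rest on the same observation that for a vertex $v$ of a simplicial clique $Q$, two non-adjacent neighbours of $v$ cannot both lie outside $Q$, and both use it to force whole classes among $A_1,\dots,B_3,K_1,K_2,K_3$ into $Q$ until a non-complete cobipartite pair lies inside a clique (the paper splits on whether some $K_i\subseteq Q$, you split on whether $Q$ meets $\bigcup_i(A_i\cup B_i)$, but the mechanism is identical, including the reliance on the $K_i$ being non-empty, which the paper uses implicitly and you rightly flag). One small correction: the reflection you name ($A_1\leftrightarrow B_2$, $A_2\leftrightarrow B_3$, $A_3\leftrightarrow B_1$, $K_2\leftrightarrow K_3$) is not an automorphism of the peculiar structure (it fixes $K_1$ yet sends $A_1$, to which $K_1$ is anticomplete, onto $B_2$, to which $K_1$ is complete); the correct reflection is $A_1\leftrightarrow B_3$, $A_2\leftrightarrow B_2$, $A_3\leftrightarrow B_1$, $K_1\leftrightarrow K_3$ with $K_2$ fixed, so the reduction to $q\in A_1$ still stands, as does your fallback of running the $q\in B_1$ case directly.
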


\begin{proof}
We use the same notation as in the definition of a peculiar graph. Assume for a contradiction that $C$ is a simplicial clique in $G$. Suppose that $K_i \setminus C \neq \emptyset$ for every $i = 1,2,3$. Then, $C \cap A_i = \emptyset$ and $C \cap B_i = \emptyset$ for every $i = 1,2,3$, in which case we may assume that $C \subseteq K_1$ since $K_i$'s are anticomplete to each other. But $K_1$ is complete to $A_2 \cup B_3$ which is not a clique, a contradiction. Thus, we may assume that $K_1 \subseteq C$ (permuting the indices if necessary). So, $C \cap (K_2 \cup K_3) = \emptyset$ and $C \cap (A_1 \cup B_1) = \emptyset$ since $C$ is a clique.

Now, if $C \cap B_3 \neq \emptyset$, say $b_3 \in C \cap B_3$, then since $b_3$ is complete to $K_2 \cup B_2$, and $K_2$ is anticomplete to $B_2$, and $C \cap K_2$ is empty, it follows that $B_2 \subseteq C$. Similarly, if $C \cap B_2 \neq \emptyset$, then $B_3 \subseteq C$. Therefore, either $(B_2 \cup B_3) \subseteq C$ or $C \cap (B_2 \cup B_3) = \emptyset$. By symmetry, either $(A_2 \cup A_3) \subseteq C$ or $C \cap (A_2 \cup A_3) = \emptyset$. Now, $K_1 \subseteq C$, $K_1$ is complete to $A_2 \cup B_3$, and $A_2 \cup B_3$ is not a clique, hence we may assume that $(B_2 \cup B_3) \subseteq C$ and $C \cap (A_2 \cup A_3) = \emptyset$. But $B_2$ is complete to $B_1 \cup A_3$ which is not a clique, and $C \cap (B_1 \cup A_3) = \emptyset$, a contradiction.
\end{proof}

\section{Properties of a minimal counterexample}\label{sec:minimal_counter}

In this section we prove several statements about the structure of a minimal counterexample to Theorem \ref{thm:stronger}, which we restate here for ease of reference.

\begin{customtheorem}{2.3}
\label{thm:stronger2}
Let $G$ be a claw-free innocent graph and let $Z$ be a consistent set of safe vertices in $G$. Then $G$ has a strong stable set that contains $Z$.
\end{customtheorem}


We say that a pair $(G,Z)$ is a \emph{suspect} if $G$ is a claw-free innocent graph, $Z$ is a consistent set of safe vertices in $G$ such that no strong stable set in $G$ contains $Z$, and $G$ is chosen with $|V(G)|$ minimal subject to the existence of such a set $Z$.

\begin{theorem}\label{thm:suspect_starters}
If $(G,Z)$ is a suspect, then $G$ is not a complete graph, every simplicial vertex of $G$ is in $Z$, $G$ is connected (i.e. $G$ does not admit a $0$-join), and $G$ does not admit twins.
\end{theorem}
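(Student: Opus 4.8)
The plan is to verify each of the four assertions separately, in each case using the minimality of $(G,Z)$ together with the lemmas and observations already established. Throughout, let $(G,Z)$ be a suspect.

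First, suppose $G$ is a complete graph. Then the only maximal clique is $V(G)$, and since $Z$ is a stable set in a complete graph, $|Z| \le 1$. If $Z = \emptyset$ then any single vertex is a strong stable set; if $Z = \{z\}$ then $\{z\}$ is a strong stable set containing $Z$. Either way $(G,Z)$ is not a suspect, a contradiction. Next, suppose $G$ has a simplicial vertex $v \notin Z$. Consider $G \setminus v$ with the set $Z$, which is still a consistent set of safe vertices in $G\setminus v$ by Observation~\ref{obs:consistent_safe} (and $v \notin Z$ so $Z \subseteq V(G\setminus v)$). By minimality of $|V(G)|$, the graph $G\setminus v$ has a strong stable set $S$ with $Z \subseteq S$. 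By Observation~\ref{obs:simplicial}, either $S$ or $S\cup\{v\}$ is a strong stable set of $G$, and in either case it contains $Z$, contradicting that $(G,Z)$ is a suspect. Hence every simplicial vertex of $G$ lies in $Z$.

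Third, suppose $G$ admits a $0$-join, i.e.\ $V(G) = V_1 \cup V_2$ with $V_1, V_2$ non-empty and anticomplete to each other; so $G$ is disconnected with components partitioned between $V_1$ and $V_2$. For $j = 1, 2$ put $G_j = G|V_j$ and $Z_j = Z \cap V_j$; each $G_j$ is a claw-free innocent graph (induced subgraph of $G$), and $Z_j$ is a consistent set of safe vertices of $G_j$ — safeness is inherited since any clown of $G_j$ is a clown of $G$ and any path in $G_j$ from a vertex of $Z_j$ to a hat is a path in $G$. As $|V(G_j)| < |V(G)|$, by minimality each $G_j$ has a strong stable set $S_j$ with $Z_j \subseteq S_j$. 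Then $S_1 \cup S_2$ is stable (no edges between $V_1$ and $V_2$), contains $Z$, and meets every maximal clique of $G$ (each maximal clique of $G$ lies entirely in some $V_j$ since $V_1$ is anticomplete to $V_2$), a contradiction. Hence $G$ is connected.

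Finally, suppose $G$ admits twins $u, v$ — adjacent vertices with $N_G(u)\setminus\{v\} = N_G(v)\setminus\{u\}$. Let $G' = G \setminus v$ and $Z' = Z \setminus \{v\}$; then $G'$ is claw-free and innocent, and $Z'$ is a consistent set of safe vertices of $G'$ by Observation~\ref{obs:consistent_safe}. By minimality, $G'$ has a strong stable set $S'$ with $Z' \subseteq S'$. If $v \notin Z$, then I claim $S'$ is already a strong stable set of $G$: it is stable in $G$ (it is stable in $G' = G\setminus v$ and does not contain $v$), and every maximal clique $K$ of $G$ either avoids $v$ — in which case $K$ is a clique of $G'$ contained in a maximal clique of $G'$, hence meets $S'$, and a short argument using that $u,v$ are twins shows $K$ itself is met — or contains $v$, in which case $(K\setminus\{v\})\cup\{u\}$ is a clique of $G'$ (by the twin property $N[u]=N[v]$), so it meets $S'$; if it meets $S'$ in $u$ then $u\in S'\subseteq S'$ hits $K$, and otherwise it meets $S'\setminus\{u\}$ which is a subset of $K$. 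The case $v\in Z$ requires more care: then $u \in Z$ too would contradict $Z$ being stable (as $uv$ is an edge), so $u \notin Z$; here one uses the homogeneity of $\{u,v\}$ to transfer a solution — run minimality on $G\setminus u$ with $Z$, get $S'$ containing $Z$ hence containing $v$, and argue as above that $S'$ is strong in $G$ using that $u$ is a twin of $v \in S'$. In all cases $G$ has a strong stable set containing $Z$, a contradiction, so $G$ does not admit twins.

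I expect the twin case to be the main obstacle, since it is the only one where the candidate strong stable set must be checked carefully against maximal cliques containing the deleted vertex, and where the bookkeeping around whether the twins lie in $Z$ matters; the complete-graph, simplicial-vertex, and $0$-join cases are essentially immediate from Observations~\ref{obs:simplicial} and \ref{obs:consistent_safe} plus minimality. A clean way to package the twin argument is to note that if $u,v$ are twins then for any strong stable set $S$ of $G\setminus v$ one of $S$ or $(S\setminus\{u\})\cup\{v\}$ works — but in fact, since $u,v$ are symmetric and a maximal clique of $G$ contains at most one of them, $S$ itself already meets every maximal clique of $G$ — so it suffices to choose the deleted vertex to be one not in $Z$, which is possible because $Z$ is stable and $uv$ is an edge.
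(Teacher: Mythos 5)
Your treatment of the first three assertions (complete graph, simplicial vertices in $Z$, and the $0$-join/connectivity case) is correct and essentially identical to the paper's proof. The twin case also follows the paper's strategy: delete a twin that is not in $Z$ (possible since $Z$ is stable and the twins are adjacent), apply minimality to get a strong stable set $S'\supseteq Z$ of the smaller graph, and argue that $S'$ is already strong in $G$. However, as written this case has two genuine problems. First, the justification in your closing paragraph is false: in this paper twins are \emph{adjacent} vertices with $N[u]=N[v]$, so a maximal clique of $G$ containing one of $u,v$ necessarily contains the other (if $v\in K$ and $u\notin K$, then $K\setminus\{v\}\subseteq N(v)\setminus\{u\}=N(u)\setminus\{v\}$, so $K\cup\{u\}$ is a clique, contradicting maximality); hence ``a maximal clique of $G$ contains at most one of them'' is exactly backwards and the sentence built on it proves nothing. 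Second, in your detailed case analysis the step ``$(K\setminus\{v\})\cup\{u\}$ is a clique of $G'$, so it meets $S'$'' is a non sequitur: $S'$ is only guaranteed to meet \emph{maximal} cliques of $G'$, and you never verify maximality (likewise, when $v\notin K$ you defer to ``a short argument'' the fact that $K$ itself, rather than some larger clique of $G'$ met elsewhere, is hit).

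The missing verifications are short, but they are the entire content of the twin case. If $v\notin K$, then $K$ is itself a maximal clique of $G'$: any vertex of $G'$ complete to $K$ would be complete to $K$ in $G$, contradicting maximality of $K$ (no twin property needed). If $v\in K$, then as above $u\in K$, so $(K\setminus\{v\})\cup\{u\}=K\setminus\{v\}$, and $K\setminus\{v\}$ is maximal in $G'$: any $w$ complete to $K\setminus\{v\}$ in $G'$ is adjacent to $u$, hence $w\in N(u)\setminus\{v\}=N(v)\setminus\{u\}$, hence $w$ is adjacent to $v$, so $K\cup\{w\}$ is a clique of $G$, again a contradiction. With these two observations your argument closes and coincides with the paper's, which packages the same facts as a single contradiction: if a maximal clique $K$ of $G$ misses $S$, then $K\setminus\{a\}$ is not maximal in $G'$, so some vertex $x$ is complete to $K\setminus\{a\}$ but $K\cup\{x\}$ is not a clique, forcing $a\in K$ and $x$ non-adjacent to $a$; maximality of $K$ then gives $b\in K$, so $x$ is adjacent to $b$, contradicting that $a,b$ are twins.
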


\begin{proof}
Let $(G,Z)$ be a suspect. Then, $G$ is not a complete graph since otherwise $|Z| \leq 1$ and every vertex of $G$ is a strong stable set in $G$.
Assume that $v$ is a simplicial vertex of $G$ and v $\notin Z$. By Observation \ref{obs:consistent_safe}, $Z$ is a consistent set of safe vertices in $G \setminus v$, and by the minimality of $G$, the graph $G \setminus v$ has a strong stable set $S$ that contains $Z$. By Observation \ref{obs:simplicial}, either $S$ or $S \cup \{v\}$ is a strong stable set in $G$ containing $Z$, a contradiction. Therefore, every simplicial vertex of $G$ is in $Z$.

Next, assume that $G$ is not connected. Let $G_1, \dots, G_k$ be the connected components of $G$ and let $Z_i = Z \cap V(G_i)$. Clearly, $Z_i$ is a consistent set of safe vertices in $G_i$. By the minimality of $G$, each $G_i$ has a strong stable set $S_i$ containing $Z_i$. Then, $S = S_1 \cup \dots \cup S_k$ is a strong stable set in $G$ containing $Z$, a contradiction. Hence, $G$ is connected.

Finally, let $a$ and $b$ be twins in $G$. Since they are adjacent, at least one of them is not in $Z$, say $a \notin Z$. Let $G' = G \setminus a$. By Observation \ref{obs:consistent_safe}, $Z$ is a consistent set of safe vertices in $G'$. By the minimality of $G$, $G'$ has a strong stable set $S$ containing $Z$. Suppose that $S$ is not a strong stable set of $G$. Then there is a maximal clique $K$ in $G$ not meeting $S$. It follows that $K \setminus \{a\}$ is not a maximal clique of $G'$, and so there is a vertex $v \notin K$ in $G'$ that is complete to $K \setminus \{a\}$. Since $K \cup \{v\}$ is not a clique of $G$, we deduce that $a \in K$ and $v$ is non-adjacent to $a$. But by the maximality of $K$, $b \in K$, and so $v$ is adjacent to $b$, contrary to the fact that $a$ and $b$ are twins.
\end{proof}

\begin{theorem}
\label{thm:not_linear_interval}
If $(G,Z)$ is a suspect, then $G$ is not a linear interval graph.
\end{theorem}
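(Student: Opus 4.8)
The plan is to derive a contradiction by constructing a strong stable set of $G$ containing $Z$. First I would use Theorem~\ref{thm:suspect_starters} to pin down the structure: since safe vertices are simplicial and, conversely, every simplicial vertex of $G$ lies in $Z$, the set $Z$ is \emph{precisely} the set of simplicial vertices of $G$; moreover $G$ is connected, is not a complete graph, and admits no twins, so by Observation~\ref{obs:simplicial_stable} $Z$ is a stable set, and by hypothesis it is consistent. Fix a linear interval numbering $v_1,\dots,v_n$. Because $G$ is connected, $v_iv_{i+1}\in E(G)$ for all $i$ (otherwise $\{v_1,\dots,v_i\}$ would be anticomplete to $\{v_{i+1},\dots,v_n\}$), and then it is easy to check that for every $i$ the closed neighbourhood $N[v_i]$ is an ``interval'' $\{v_a,\dots,v_b\}$ of consecutive vertices and, more generally, that every maximal clique of $G$ is such an interval. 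Consequently each simplicial vertex lies in a \emph{unique} maximal clique (namely its own closed neighbourhood), and distinct simplicial vertices lie in distinct maximal cliques (as $Z$ is stable). Write the maximal cliques of $G$ as $Q_1,\dots,Q_t$ ($t\ge 2$, since $G$ is not complete) with increasing left endpoints; then their right endpoints also increase, consecutive $Q_j$'s have a common vertex, $Q_1=N[v_1]$ and $Q_t=N[v_n]$, and in particular $Q_1$ and $Q_t$ each contain a (simplicial) vertex of $Z$.

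The strong stable set $S$ would be built in two stages. Stage one: put all of $Z$ into $S$; this hits every maximal clique containing a simplicial vertex. Call these the \emph{$Z$-cliques} and list them as $Q_{l_0}=Q_1,Q_{l_1},\dots,Q_{l_R}=Q_t$; let $z^{(p)}$ be the simplicial vertex of $Q_{l_p}$. Stage two: for each $p$, hit the remaining maximal cliques $Q_{l_p+1},\dots,Q_{l_{p+1}-1}$ using only vertices whose index lies strictly between the right endpoint of $Q_{l_p}$ and the left endpoint of $Q_{l_{p+1}}$; call these the \emph{inner} vertices of block $p$. Since $N[z^{(q)}]=Q_{l_q}$ for every $q$ and these intervals are pairwise disjoint and disjoint from all the ``gaps'', every inner vertex is non-adjacent to every vertex of $Z$; so the task reduces to choosing, within each block, a stable set of inner vertices hitting $Q_{l_p+1},\dots,Q_{l_{p+1}-1}$, which I would do by the natural left-to-right greedy (for the leftmost clique not yet hit, take its rightmost inner vertex), the interval structure of the $Q_j$'s being used to keep successive choices non-adjacent.

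The heart of the argument is that this construction cannot fail. Suppose it did. The essential bad configuration is that some clique $Q_j$ with $l_p<j<l_{p+1}$ has \emph{no} inner vertex, i.e.\ $Q_j\subseteq N[z^{(p)}]\cup N[z^{(p+1)}]$. Writing $Q_j=\{v_a,\dots,v_b\}$, the interval $Q_j$ then begins inside $N[z^{(p)}]=Q_{l_p}$ and ends inside $N[z^{(p+1)}]=Q_{l_{p+1}}$; one checks from the interval structure that $v_a\in N[z^{(p)}]\setminus N[z^{(p+1)}]$, $v_b\in N[z^{(p+1)}]\setminus N[z^{(p)}]$, $v_av_b\in E(G)$, and $v_a,v_b\notin\{z^{(p)},z^{(p+1)}\}$ (for instance $v_a\ne z^{(p)}$, since $N[z^{(p)}]=Q_{l_p}$ does not contain $v_b$, whereas $v_a$ is adjacent to $v_b$). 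Hence $z^{(p)}\d v_a\d v_b\d z^{(p+1)}$ is an induced path of length $3$: an odd path between the two vertices $z^{(p)},z^{(p+1)}$ of $Z$, contradicting the consistency of $Z$. So no block contains a bad configuration, the greedy in each block terminates having hit all the required cliques, and the resulting set $S$ is a stable set meeting every maximal clique of $G$ and containing $Z$ --- contradicting that $(G,Z)$ is a suspect.

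The part I expect to be the real work is the stability and covering bookkeeping of the greedy in Stage two: because the right endpoints of closed neighbourhoods need not be monotone along $v_1,\dots,v_n$, one must argue with care (again using that maximal cliques are intervals) that the inner vertices picked within a block are pairwise non-adjacent and that each intermediate clique is indeed hit; as in the displayed step, every ``bad'' sub-configuration that would break this is ruled out by exhibiting a short odd path between two vertices of $Z$ and invoking consistency.
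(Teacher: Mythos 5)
Your plan is genuinely different from the paper's: the paper never builds the strong stable set directly, but instead uses the minimality of the suspect, deleting the initial segment $\{v_1,\dots,v_{i-1}\}$, invoking Lemma~\ref{lem:heart_of_it_all} to show that $(Z\setminus\{v_1\})\cup\{v_i\}$ is a consistent set of safe vertices in the smaller graph, and lifting the strong stable set back. Your skeleton is fine as far as it goes: maximal cliques are intervals with increasing endpoints, $N[z]$ is the unique maximal clique through a simplicial $z$, any vertex of $S\setminus Z$ hitting an intermediate clique is forced to be an inner vertex of its block, and the length-three odd path correctly kills a clique contained in $N[z^{(p)}]\cup N[z^{(p+1)}]$.

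However, there is a genuine gap at exactly the step you flag as ``the real work'', and it is not mere bookkeeping. First, note that your displayed bad configuration is essentially vacuous: if the gap between $N[z^{(p)}]$ and $N[z^{(p+1)}]$ is nonempty, then \emph{every} intermediate clique automatically contains an inner vertex (its left endpoint precedes that of $Q_{l_{p+1}}$ and its right endpoint exceeds that of $Q_{l_p}$), so the ``no inner vertex'' case only occurs when the gap is empty. Hence the argument you actually give says nothing about the real failure mode, which is that all intermediate cliques meet the gap and yet no \emph{stable} set of inner vertices hits them all (equivalently, your greedy is forced to take two adjacent vertices). This failure mode is a real possibility for linear interval structures: take the maximal cliques $[1,4],[2,5],[5,7],[6,9],[8,11]$ on $v_1,\dots,v_{11}$, with $z^{(p)}=v_1$, $z^{(p+1)}=v_{11}$ and gap $\{v_5,v_6,v_7\}$; the clique $[2,5]$ can only be hit by $v_5$, the clique $[6,9]$ only by $v_6$ or $v_7$, and both of these are adjacent to $v_5$ through the clique $[5,7]$, so no stable transversal by inner vertices exists. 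This configuration is of course excluded in a suspect, but only because $Z$ fails to be consistent there, and the witnessing odd path, e.g.\ $v_1\d v_2\d v_5\d v_6\d v_8\d v_{11}$, has length five; one can check that there is no induced $v_1$--$v_{11}$ path of length three, and indeed every induced path between them has length five, so neither a ``short'' odd path nor a pair of paths of opposite parities is available. Consequently, to complete Stage two you would have to show that any adjacency conflict among inner vertices yields an odd induced path from $z^{(p)}$ to $z^{(p+1)}$ whose exact parity you control, and this parity bookkeeping threads through all the intermediate cliques of the block; it is the actual content of the claim and is absent from the proposal. The paper sidesteps all of this by using minimality together with Lemma~\ref{lem:heart_of_it_all}, which is why its proof is short.
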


\begin{proof}
Suppose $G$ is a linear interval graph. Let $v_1, \dots, v_k$ be the vertices of $G$ in order. By Theorem \ref{thm:suspect_starters}, $v_1, v_k$ are in $Z$ since they are simplicial. Let $i$ be maximum such that $v_2$ is adjacent to $v_i$. Note that $i \neq 1$ as $G$ is connected and not a complete graph by Theorem \ref{thm:suspect_starters}. Also, since $v_1, v_2$ are not twins, $v_1$ is not adjacent to $v_i$; and since $G$ is connected, $v_1$ is adjacent to $v_2$. Note that no neighbor $v_m$ of $v_i$, where $m > i$, is in $Z$ since otherwise $v_1 \d v_2 \d v_i \d v_m$ is a three-edge path, contrary to $v_1, v_m \in Z$. 

Moreover, we claim that $v_t \notin Z$ for $2 \leq t \leq i-1$. Assume that $v_j \in Z$ where $2 \leq j \leq i-1$. Then, 
\begin{itemize}
\itemsep-0.2em
\item $v_j$ is not adjacent to $v_1$ because $Z$ is stable, 
\item $v_j$ is complete to $\{v_2, v_i\}$ since $v_2$ is adjacent to $v_i$ and  $2 \leq j \leq i-1$,
\item $v_j$ is not adjacent to $v_\ell$ for $\ell > i$ since $v_j$ is simplicial and $v_j$ is adjacent to $v_2$.
\end{itemize}

Let $P$ be a path from $v_i$ to $v_k$ in $G \setminus \{v_1, \dots, v_{i-1}\}$. Then one of $v_1 \d v_2 \d v_i \d P \d v_k$ and $v_j \d v_i \d P \d v_k$ is an odd path in $G$ between two members of $Z$, contrary to the fact that $Z$ is consistent. This proves that $v_t \notin Z$ for $2 \leq t \leq i-1$.

Now, let $\tilde G = G \setminus \{v_1, \dots, v_{i-1}\}$ and let $\tilde Z = (Z \setminus \{v_1\}) \cup \{v_i\}$. Then, by Lemma \ref{lem:heart_of_it_all} applied to the graph $G \setminus \{v_3, \dots, v_{i-1}\}$, $\tilde Z$ is a consistent set of safe vertices in $\tilde G$. By the minimality of $G$, there is a strong stable set $\tilde S$ in $\tilde G$ containing $\tilde Z$, and by Lemma \ref{lem:heart_of_it_all}, $S= \tilde S \cup \{v_1\}$ is a strong stable set in $G$ containing $Z$.
\end{proof}

\begin{theorem}
\label{thm:no_prop_coh_W}
If $(G,Z)$ is a suspect then $G$ does not admit a proper coherent $W$-join.
\end{theorem}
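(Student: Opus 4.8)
The plan is to argue by contradiction: assuming the suspect $(G,Z)$ admits a proper coherent $W$-join $(A,B)$, I would build a smaller suspect. First I record structure. Since $(A,B)$ is proper, both $|A|\ge 2$ and $|B|\ge 2$ (a singleton set cannot be mixed on). Partition $V(G)\setminus(A\cup B)$ into $C$ (the vertices complete to $A\cup B$, which is a clique since $(A,B)$ is coherent), $D_A$ (complete to $A$, anticomplete to $B$), $D_B$ (complete to $B$, anticomplete to $A$), and $D_0$ (anticomplete to $A\cup B$). I claim $Z\cap(A\cup B\cup C)=\emptyset$: members of $Z$ are simplicial; a vertex of $C$ has the non-clique $A\cup B$ inside its neighbourhood; and a simplicial vertex $a\in A$ would force every neighbour of $a$ in $B$---there is at least one, as $a$ is mixed---to be complete to $A$, contradicting properness (and symmetrically for $B$). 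Moreover $Z$ meets at most one of $D_A$ and $D_B$: if $d_A\in Z\cap D_A$ and $d_B\in Z\cap D_B$, choose $a\in A$ and $b\in B$ with $ab\in E(G)$; then $d_A\d a\d b\d d_B$ is an induced path (using that $D_A$ is anticomplete to $B$, that $D_B$ is anticomplete to $A$, and that $d_A$ and $d_B$ are non-adjacent because $Z$ is stable), an odd path between two members of the consistent set $Z$, which is impossible. By symmetry assume $Z\cap D_B=\emptyset$, so that $Z\subseteq D_A\cup D_0$.

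Next I would delete a vertex $b'\in B$ (permissible since $|B|\ge 2$), chosen with $N(b')\cap A$ inclusion-minimal among $\{\,N(b)\cap A:b\in B\,\}$; here $G$ has no twins by Theorem~\ref{thm:suspect_starters}, so distinct vertices of $B$ have distinct neighbourhoods in $A$. The graph $G'=G\setminus b'$ is claw-free and innocent, and since $b'\notin Z$, Observation~\ref{obs:consistent_safe} shows that $Z$ is a consistent set of safe vertices in $G'$; by minimality of $G$ the graph $G'$ therefore has a strong stable set $S'$ with $Z\subseteq S'$. Since $N_G(b')=(B\setminus b')\cup(N(b')\cap A)\cup C\cup D_B$ is disjoint from $D_A\cup D_0$, and hence from $Z$, the set $W:=S'\cap N_G(b')$ avoids $Z$; so $S:=(S'\setminus W)\cup\{b'\}$ contains $Z$ and is stable, and it suffices to show that $S$, or else $S'$ itself, is a \emph{strong} stable set of $G$.

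Every maximal clique of $G$ not containing $b'$ is a maximal clique of $G'$ and so is met by $S'$; and its intersection with $S'$, being a stable subset of a clique, is a single vertex. Thus if $S$ misses some maximal clique $K$ of $G$, then $b'\notin K$ and $S'\cap K=\{x\}$ for a neighbour $x$ of $b'$ that got deleted, while if instead $S'$ is not already strong in $G$ then some maximal clique containing $b'$ is missed by $S'$. In the first situation the goal is to show that $b'$ is complete to $K\setminus\{x\}$, so that $K\cup\{b'\}$ is a clique and maximality of $K$ forces $b'\in K$, a contradiction; this is meant to be extracted from the homogeneity of $B$ (all of its vertices have the same neighbours outside $A\cup B$) together with the minimal choice of $b'$, which controls the neighbours of $b'$ inside $A$. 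The maximal cliques of $G$ that do contain $b'$ are met by $b'\in S$.

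I expect the genuine obstacle to be exactly this last step: deleting $b'$ destroys the maximal cliques of $G$ that lie inside $A\cup B\cup C$ and used $b'$, while re-inserting $b'$ can create edges from $b'$ into $S'$, and one has to rule out that these two effects conspire. Making ``$b'$ is complete to $K\setminus\{x\}$'' precise appears to need a case split on the location of $x$ (in $B$, in $A$, or in $C$), and this in turn depends on whether $A\cup B$ contains a $C_4$: if it does, then by Observation~\ref{obs:path_to_hole} no safe vertex is adjacent to that hole, so in fact $Z\subseteq D_0$; and if it does not, then $\{\,N(b)\cap A:b\in B\,\}$ is a chain and the inclusion-minimal $b'$ is well behaved (for instance, in that case $D_A$ and $D_B$ are both nonempty, since otherwise a vertex of $A$ or $B$ with smallest neighbourhood on the other side would be simplicial, hence in $Z$, contradicting the previous paragraph). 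This is why the structural facts gathered above, rather than minimality alone, do the real work.
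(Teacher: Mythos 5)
Your preliminary structure is essentially sound (in fact, since $(A,B)$ is a proper $W$-join, Lemma \ref{lem:no_C4_order} \emph{always} produces a square in $G|(A\cup B)$, so Observation \ref{obs:path_to_hole} gives $Z\subseteq D_0$ outright and your case split on whether a $C_4$ exists is unnecessary). But the proof has a genuine gap exactly where you flag it, and it is not merely a technical step left to check: the claim that a maximal clique $K$ missed by $S=(S'\setminus W)\cup\{b'\}$ must satisfy ``$b'$ is complete to $K\setminus\{x\}$'' is false in general. Nothing prevents the unique vertex $x=S'\cap K$ from lying in your set $C$ (complete to $A\cup B$, hence a neighbour of $b'$, hence removed) while $K$ also contains vertices of $D_A$, which are anticomplete to $B\ni b'$; such a $K$ is missed by $S$, yet $K\cup\{b'\}$ is not a clique, so no contradiction with maximality arises. (Edges between $C$ and $D_A$ are not excluded by claw-freeness, so this configuration is not ruled out by anything you have established.) In addition, you propose to show that ``$S$, or else $S'$ itself'' is strong, but you give no argument excluding the situation where simultaneously some maximal clique containing $b'$ is missed by $S'$ and some maximal clique avoiding $b'$ loses its unique $S'$-vertex to $W$; then neither candidate works. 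Local surgery around a single deleted vertex of $B$ does not appear repairable along these lines, because you have no global control over how $S'$ meets the cliques through $A\cup B$.

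For comparison, the paper proceeds by an actual decomposition across the $W$-join rather than vertex deletion. Fixing a square $H$ in $A\cup B$ (whence $Z\subseteq F$ in the paper's notation, your $D_0$), and checking that $C$, $D$ are cliques and $E$ is anticomplete to $F$, one shows there is \emph{no} path from $C$ to $D$ with interior in $F$: such a path together with $H$ would yield an odd prism or an odd hole. Hence $F=F_C\cup F_D$ with $C\cup F_C$ anticomplete to $D\cup F_D$, and minimality is applied to the two sides $G|(C\cup F_C)$ and $G|(D\cup F_D)$, each augmented by a marker vertex complete to $C$ (resp.\ $D$), Lemma \ref{lem:heart_of_it_all} certifying that the markers are safe so the augmented pairs are smaller instances of the theorem. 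The two resulting strong stable sets are then glued together with \emph{both} ends of a cosimplicial non-edge $ab$ of $G|(A\cup B)$, which exists by Theorem \ref{thm:chordal_bip} since $G$ has no antihole of length at least six; as in Lemma \ref{lem:simplicial_edges}, the pair $\{a,b\}$ meets every maximal clique intersecting $A\cup B$. That last device supplies precisely the control over cliques through $A\cup B$ that your patching argument is missing, and is the idea you would need to import to complete a proof.
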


\begin{proof}
Assume that $G$ admits a proper coherent $W$-join. Let $V(G) = A \cup B \cup C \cup D \cup E \cup F$ be a partition of $V(G)$ where $(A,B)$ is a homogeneous pair, $C$ is complete to $A$ and anticomplete to $B$, $D$ is complete to $B$ and anticomplete to $A$, $E$ is complete to $A\cup B$, and $F$ is anticomplete to $A \cup B$. As this is a proper coherent $W$-join, $E$ is a clique (possibly empty), no vertex of $A$ is complete or anticomplete to $B$, no vertex of $B$ is complete or anticomplete to $A$. Then, by Lemma \ref{lem:no_C4_order}, $G | (A\cup B$) contains a $C_4$. It follows from Observation \ref{obs:path_to_hole} that $Z \subseteq F$.

Also, $C$ is a clique, for otherwise $G$ has a claw with center in $A$, two leaves in $C$ and the third leaf in $B$, a contradiction since $G$ is claw-free. Similarly, $D$ is a clique. Moreover, $E$ is anticomplete to $F$, for otherwise there is a claw in $G$ with center in $E$, two leaves in $A \cup B$, and one leaf in $F$.

Let $H$ be a $C_4$ in $G | (A\cup B$). If there is a path $P$ from $C$ to $D$ with $P^* \subseteq F$, then $G | (V(H) \cup V(P))$ is either an odd prism or contains an odd hole, contrary to the fact that $G$ is innocent. It follows that $F = F_C \cup F_D$ such that $C \cup F_C$ is anticomplete to $D \cup F_D$.

Now, let $G_C = G | (C \cup F_C)$ and $G_D = G | (D \cup F_D)$. Let $Z_C =  Z \cap F_C$ and $Z_D =  Z \cap F_D$. Let $G_1$ be obtained from $G_C$ by adding a new vertex $v_1$ complete to $C$. Similarly, let $G_2$ be obtained from $G_D$ by adding a new vertex $v_2$ complete to $D$. Let $Z_1 = Z_C \cup \{v_1\}$ and $Z_2 = Z_D \cup \{v_2\}$. By Lemma \ref{lem:heart_of_it_all}, for $i =1,2$, $Z_i$ is a consistent set of safe vertices in $G_i$. This is because the graph $G | (H \cup C \cup F_C)$ is an induced subgraph of $G$, hence claw-free innocent, and $G_1$ is obtained by replacing $H$ with $v_1$.

By the minimality of $G$, there exists a strong stable set $S_i$ in $G_i$ containing $Z_i$. Let $S_C = S_1 \setminus \{v_1\}$ and $S_D = S_2 \setminus \{v_2\}$. Note that $Z \subseteq S_C \cup S_D$. Let $G_{AB} = G | (A \cup B)$. Since $G$ contains no antihole of length at least six, by Theorem \ref{thm:chordal_bip}, $G_{AB}$ contains a cosimplicial non-edge $ab$ with $a \in A$, $b \in B$. Now, $S = S_C \cup S_D \cup \{a, b\}$ is a strong stable set in $G$ containing $Z$. This is because if $K$ is a maximal clique of $G$, then either $K \subseteq C \cup F_C$ and $K \cap S_C \neq \emptyset$, or $K \subseteq D \cup F_D$ and $K \cap S_D \neq \emptyset$, or $K \subseteq A \cup C \cup E$ and $a \in K$, or $K \subseteq B \cup D \cup E$ and $b \in K$, or $K \subseteq A \cup B \cup E$ and $K \cap \{a, b\} \neq \emptyset$.
\end{proof}

\begin{lemma}
\label{lem:properties_1_join}
Let $(G,Z)$ be a suspect. Assume that $G$ admits a $1$-join $(V_1, V_2)$ and let $A_1, A_2$ be as in the definition of a $1$-join. Then, $Z \cap (A_1 \cup A_2) = \emptyset$.
\end{lemma}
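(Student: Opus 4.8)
The plan is to argue by contradiction: suppose $Z \cap (A_1 \cup A_2) \neq \emptyset$, say there is a vertex $z \in Z \cap A_1$. The key structural fact to exploit is that in a $1$-join $(V_1,V_2)$ the set $A_1 \cup A_2$ behaves like a clique separator: $A_1 \cup A_2$ is a clique, $V_1 \setminus A_1$ is anticomplete to $V_2$, and $V_2 \setminus A_2$ is anticomplete to $V_1$. Since $z \in A_1$ and $z$ is safe (hence simplicial), $N_G(z)$ is a clique; in particular the neighbors of $z$ in $V_2$ all lie in $A_2$ (because $V_2 \setminus A_2$ is anticomplete to $V_1 \supseteq \{z\}$) and form a clique together with $z$'s neighbors in $V_1$. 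So far this is consistent, so I need to extract a forbidden configuration or a contradiction with minimality.

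The main idea I would pursue is to produce, inside $G$, either an odd hole/prism/handcuff (contradicting innocence) or a short odd path between two members of $Z$ (contradicting consistency of $Z$), or else to contract part of the $1$-join and invoke minimality via Lemma~\ref{lem:heart_of_it_all}. Concretely: because $G$ is connected, not complete, and admits no $0$-join, and because $|V_1|,|V_2|\geq 2$ with $A_i, V_i\setminus A_i$ both nonempty, the side $V_1$ contains a vertex outside $A_1$, and that vertex is joined to the rest of $G$ only through $A_1$. If $z \in A_1$, then $z$ together with a vertex of $V_1 \setminus A_1$ and the structure of $A_1 \cup A_2$ forces $z$ to see essentially all of whatever it is adjacent to as a clique, which pins down that $z$'s only role is as a "pendant" at the clique $A_1 \cup A_2$; I would then try to show $z$ is a simplicial vertex of a small modification of $G$ and peel it off. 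The cleanest route is probably: show that $A_1 \cup A_2$ being a clique cutset together with $z$ simplicial in $A_1$ lets us take a strong stable set in the smaller side not containing $z$, then combine — but since $z \in Z$ this is exactly the obstruction, so instead I would show that having $z \in A_1$ contradicts one of the already-established properties of a suspect (Theorem~\ref{thm:suspect_starters} or \ref{thm:no_prop_coh_W}), most likely by exhibiting twins or by showing $z$ would have to be non-simplicial.

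The hard part will be the bookkeeping around which vertices of $V_2$ are adjacent to $z$ and ensuring no claw or forbidden subgraph is overlooked: one must carefully use claw-freeness to control how $A_1$ attaches to $V_1 \setminus A_1$ and to $A_2$, and use innocence (no odd holes, no handcuffs, no prisms) to rule out long detours between the two sides. I expect the decisive step to be showing that if $z \in A_1 \cap Z$, then either $z$ has a non-neighbor $w \in A_1$ and a common-neighborhood mismatch producing a claw or a $C_4$ that Observation~\ref{obs:path_to_hole} forbids (since $z$ is safe, it lies in no hole and has no neighbor in any hole), or $A_1 = \{z\}$ and then $z$ together with $A_2$ gives a clique cutset isolating $V_1 \setminus A_1$, which one removes by minimality to reach a contradiction with $z \in Z$. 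Chasing down these cases and checking they are exhaustive, while staying inside the already-proven facts about suspects, is the real content; everything else is routine.
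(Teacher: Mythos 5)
There is a genuine gap: your proposal never settles on an argument, and the route you lean toward is the wrong one. The paper's proof is short and goes exactly through the door you explicitly close. Assuming (by symmetry) $z\in Z\cap A_2$ (note $|Z\cap(A_1\cup A_2)|\le 1$ since $A_1\cup A_2$ is a clique and $Z$ is stable), one applies minimality to the two smaller graphs $G_1'=G|(V_1\cup\{z\})$ and $G_2=G|V_2$, obtaining strong stable sets $S_1'\supseteq Z_1\cup\{z\}$ and $S_2\supseteq Z_2$, and then $S_1'\cup S_2$ is a strong stable set of $G$ containing $Z$: every maximal clique of $G$ lies in $V_1$, lies in $V_2$, or equals $A_1\cup A_2$, and the last is hit by $z$ itself. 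The crucial point is that $z\in Z$ is not ``exactly the obstruction'' to combining, as you write --- it is exactly what makes the combination work, since $z$ is forced into the partial solution on the $V_1$-side and automatically covers the joint clique $A_1\cup A_2$. Your stated reason for abandoning this route is therefore backwards, and having abandoned it you are left with no complete argument.

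The alternative you pivot to --- deriving a structural contradiction (a claw, a forbidden induced subgraph, twins, or a violation of Theorem~\ref{thm:suspect_starters} or Observation~\ref{obs:path_to_hole}) from $z\in Z\cap A_1$ alone --- cannot succeed, because the configuration is not structurally impossible: there are small claw-free innocent graphs admitting a $1$-join in which a safe, consistent vertex lies in $A_1$ (they simply are not minimal counterexamples, having strong stable sets through $z$). The lemma is a statement about suspects and must be proved via minimality and recombination, not by forbidden-subgraph chasing. Your ``decisive step'' also contains an outright error: $z$ cannot have a non-neighbor $w\in A_1$, since $A_1\cup A_2$ is a clique by the definition of a $1$-join, so that branch of your case analysis is vacuous, and the remaining branch ($A_1=\{z\}$, remove a clique cutset ``by minimality'') is not an argument --- it is unclear what smaller instance you solve, what set of safe vertices you pass to it, or how the partial solutions are glued so that the clique $A_1\cup A_2$ is met.
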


\begin{proof}
For $i = 1, 2$, let $Z_i = V_i \cap Z$ and $G_i = G | V_i$. Notice that $| Z \cap (A_1 \cup A_2) | \leq 1$ since $A_1 \cup A_2$ is a clique. We may assume that $Z \cap A_2 \neq \emptyset$, say $Z \cap A_2 = \{a_2\}$. Now, let $G_1' = G | (V_1 \cup \{a_2\})$. Since $G_1'$ is an induced subgraph of $G$, by Observation \ref{obs:consistent_safe}, $Z_1' = Z_1 \cup \{a_2\}$ is a consistent set of safe vertices in $G_1'$. Observe that $|V(G_1')| < |V(G)|$ and $|V(G_2)| < |V(G)|$. By the minimality of $G$, there exist a strong stable set $S_1'$ in $G_1'$ containing $Z_1'$, and a strong stable set $S_2$ in $G_2$ containing $Z_2$. Now, $S_1' \cup S_2$ is a strong stable set in $G$ containing $Z$. This is because if $K$ is a maximal clique of $G$, then either $K \subseteq V_1$ and $K \cap S_1' \neq \emptyset$, or $K \subseteq V_2$ and $K \cap S_2 \neq \emptyset$, or $K = A_1 \cup A_2$ and $a_2 \in K$.
\end{proof}

\begin{theorem}
\label{thm:no_rich_1_join}
If $(G,Z)$ is a suspect, then $G$ does not admit a rich $1$-join.
\end{theorem}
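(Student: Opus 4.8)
Suppose for a contradiction that the suspect $(G,Z)$ admits a rich $1$-join $(V_1,V_2)$, with $A_1\subseteq V_1$ and $A_2\subseteq V_2$ as in the definition; so $A_1\cup A_2$ is a clique, $A_i$ and $V_i\setminus A_i$ are non-empty, and $V_i\setminus A_i$ is anticomplete to $V_{3-i}$ for $i=1,2$. By Lemma \ref{lem:properties_1_join}, $Z\cap(A_1\cup A_2)=\emptyset$; write $Z_i=Z\cap V_i$, so $Z_i\subseteq V_i\setminus A_i$. The first step is to record the maximal cliques of $G$: since $V_i\setminus A_i$ is anticomplete to $V_{3-i}$, every clique of $G$ lies in $V_1$, in $V_2$, or in $A_1\cup A_2$, and one checks that the maximal cliques of $G$ are exactly the maximal cliques of $G|V_1$ meeting $V_1\setminus A_1$, the maximal cliques of $G|V_2$ meeting $V_2\setminus A_2$, and the clique $A_1\cup A_2$ (which is maximal because no vertex of $V_i\setminus A_i$ is complete to $A_{3-i}$).

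Next I would decompose. For $i\in\{1,2\}$ set $H_i=G|(V_i\cup A_{3-i})$; since $V_{3-i}\setminus A_{3-i}\neq\emptyset$, this is a proper induced subgraph of $G$, hence claw-free and innocent, and by repeated use of Observation \ref{obs:consistent_safe}, $Z_i=Z\cap V(H_i)$ is a consistent set of safe vertices in $H_i$. By the minimality of $G$ there is a strong stable set $T_i$ of $H_i$ with $Z_i\subseteq T_i$; since $A_1\cup A_2$ is a maximal clique of $H_i$, the set $T_i$ contains exactly one vertex $u_i$ of $A_1\cup A_2$. I would also use the proper induced subgraphs $G|V_i$, and the graph $G_i'$ obtained from $G|V_i$ by adding one new vertex $w_i$ complete to $A_i$ and anticomplete to $V_i\setminus A_i$; one checks that $G_i'$ is claw-free and innocent (it is obtained from $H_i$ by collapsing the twin class $A_{3-i}$ to a single vertex).

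Now the goal is to combine $T_1$ and $T_2$ into a strong stable set $S$ of $G$ with $Z\subseteq S$. This is routine when $u_1$ and $u_2$ are ``compatible'': for instance if $u_1\in A_1$ (so $T_1\subseteq V_1$) and $u_2\in A_1$ then $T_1\cup(T_2\cap V_2)$ is such an $S$, because $u_1$ is anticomplete to $V_2\setminus A_2\supseteq T_2\cap V_2$ and covers $A_1\cup A_2$, while the remaining maximal cliques of $G$ are covered by $T_1$ and $T_2$; symmetric arguments handle all cases except the \emph{conflict case}, in which, for every admissible choice of $T_1,T_2$, the vertices $u_1,u_2$ lie in different parts of $A_1\cup A_2$ and neither can be deleted from its $T_i$ without leaving some maximal clique inside $V_i$ uncovered. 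I expect this conflict case to be the main obstacle --- it is exactly the point the introduction flags as requiring the full strength of Theorem \ref{thm:stronger}.

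To break the conflict I would bring in innocence. Call an even hole of $G|V_i$ \emph{bad} if it meets $A_i$ in exactly two (necessarily adjacent) vertices. If $G|V_1$ and $G|V_2$ both had bad holes $C_1$ and $C_2$, then --- as $A_1$ is complete to $A_2$ while $V_1\setminus A_1$ and $V_2\setminus A_2$ are anticomplete --- the $A_1$-edge of $C_1$ together with the $A_2$-edge of $C_2$ would span a $K_4$, and $C_1\cup C_2$ would induce an eye mask in $G$, contradicting innocence; so, relabelling if necessary, $G|V_2$ has no bad hole. Under that hypothesis, I would show $w_2$ is a safe vertex of $G_2'$ (and that $Z_2\cup\{w_2\}$ is then a consistent set of safe vertices in $G_2'$): any witness to the failure --- a clown and an even path from $w_2$ to its hat --- would, after deleting $w_2$, restrict to a clown contained in $V_2\setminus A_2$ and an odd path from a vertex of $A_2$ to its hat, and prolonging this along the edges between $A_1$ and $A_2$ and the mirror configuration on the $V_1$-side (or, more directly, applying Observation \ref{obs:path_to_hole}) would produce a handcuff in $G$. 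Then minimality gives a strong stable set $S_2'$ of $G_2'$ with $Z_2\cup\{w_2\}\subseteq S_2'$, and, because $w_2$ is complete to $A_2$, the set $S_2=S_2'\cap V_2$ is a stable set with $Z_2\subseteq S_2$, $S_2\cap A_2=\emptyset$, meeting every maximal clique of $G$ contained in $V_2$. Since $S_2$ avoids the interface $A_2$, combining it with a suitable $T_1$ (after, if necessary, the analogous reduction on the $V_1$-side) gives a stable set of $G$ that meets every maximal clique and contains $Z$, contradicting that $(G,Z)$ is a suspect. Richness of the $1$-join is what keeps all these pieces strictly smaller than $G$ and the reductions non-degenerate.
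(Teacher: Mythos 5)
Your setup (decompose along the $1$-join, solve smaller pieces by minimality, and recognize that the only real difficulty is controlling which side of the interface the partial solutions meet) matches the paper's strategy, but the way you resolve the ``conflict case'' has a genuine gap. Your criterion for choosing the side that receives the apex gadget --- ``$G|V_2$ has no even hole meeting $A_2$ in exactly two vertices'' --- does not imply that $w_2$ is safe in $G_2'$, nor that $Z_2\cup\{w_2\}$ is consistent. For example, if $A_2$ consists of a single vertex $a$ (so no hole can meet $A_2$ in two vertices and your hypothesis holds vacuously) and $V_2\setminus A_2$ contains a clown whose hat is joined to $a$ by an odd path, then $w_2$ has an even path to that hat and is not safe; and nothing forces a handcuff in $G$, because by Lemma \ref{lem:hole_or_simplicial} the structure beyond the clique cutset on the $V_1$-side may be a simplicial vertex of $G$ (hence a vertex of $Z$, by Theorem \ref{thm:suspect_starters}) rather than a hole, in which case there is no ``mirror'' even cycle with which to complete your handcuff. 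Your argument also never addresses consistency, i.e.\ the parity of paths from $A_2$ to the vertices of $Z_2$ inside $V_2$, which is an independent obstruction to adding $w_2$ to the prescribed set. The correct selection criterion is a parity condition: using Lemma \ref{lem:hole_or_simplicial} one finds on each side either an even hole or a vertex of $Z$, and Observation \ref{obs:path_to_hole}, the consistency of $Z$, and the exclusion of handcuffs and eye masks force the paths from the interface to these objects on the two sides to have opposite parities; that parity, not the presence of ``bad holes,'' determines which side can accept the apex.

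There is a second gap in the final combination. After producing $S_2\subseteq V_2\setminus A_2$ covering the maximal cliques inside $V_2$, you still need a side-$1$ partial solution that is stable against $S_2$ and covers $A_1\cup A_2$, i.e.\ one meeting $A_1$ rather than $A_2$. A strong stable set of $H_1=G|(V_1\cup A_2)$ meets the maximal clique $A_1\cup A_2$ in exactly one vertex, but you have no mechanism to force that vertex into $A_1$; ``a suitable $T_1$'' is exactly what must be constructed. The paper does this with the second gadget of Lemma \ref{lem:heart_of_it_all}: on the side where the interface-to-object paths are odd, it attaches a pendant two-edge path whose end vertex is placed in the prescribed set, which forces the resulting strong stable set to meet $A_2$, while the apex gadget on the other side forces avoidance of $A_1$; both uses are licensed by the same parity analysis, and this is precisely where the full inductive strength of Theorem \ref{thm:stronger} (prescribing vertices into the stable set) is consumed. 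Without an argument of this kind your proof does not close.
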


\begin{proof}
Assume that $G$ admits a rich $1$-join $(V_1, V_2)$ and let $A_1, A_2$ be as in the definition of a $1$-join. For $i = 1, 2$, let $B_i = V_i \setminus A_i$ and $Z_i = V_i \cap Z$, and let $G_i = G | V_i$. Then by Lemma \ref{lem:properties_1_join}, $Z \cap (A_1 \cup A_2) = \emptyset$, and so $Z_i = B_i \cap Z$. Also, since $A_i$ is a clique cutset of $G$, by Lemma \ref{lem:hole_or_simplicial}, either there is an even hole in $G_i$ or there is a simplicial vertex of $G$ in $B_i$. If there is an even hole $H$ in $G_i$, then $|H \cap A_i|$ is either zero or two: $|H \cap A_i| \leq 2$ as $A_i$ is a clique, and if $|H \cap A_i| = 1$, then $G$ contains a claw. Now, let $O_i$ ($O$ for object) denote an even hole in $G_i$ or a simplicial vertex of $G$ in $B_i$. Let $a_1 \in A_1$ and $a_2 \in A_2$. Let $Q_1$ be a path from $a_2$ to $O_1$ and $Q_2$ be a path from $a_1$ to $O_2$. If $O_i$ is an even hole, then by Observation \ref{obs:path_to_hole}, the last vertex of $Q_i$ together with $O_i$ form a clown. Now, if $O_i$ is a simplicial vertex, let $P_i = Q_i$, and if $O_i$ is an even hole, let $P_i = Q_i \cup \{v_i\}$ where $v_i \in O_i$ is a neighbor of the last vertex of $Q_i$.

\begin{itemize}
\itemsep0em
\item If, for $i =1, 2$, $O_i$ is a simplicial vertex of $G$ in $B_i$, then $O_i \in Z_i$ by Theorem \ref{thm:suspect_starters}. Then, $P_1, P_2$ have different parities since otherwise $P_1 \cup P_2 \setminus \{a_1, a_2\}$ is an odd path from $O_1$ to $O_2$, contrary to $O_1, O_2 \in Z$.

\item If, for $i =1, 2$, $O_i$ is an even hole in $G_i$, then we may assume that $O_1 \subseteq B_1$, for otherwise if $O_i \cap A_i = 2$ for $i =1, 2$, then $O_1 \cup O_2$ is an eye mask in $G$. Now, $P_1, P_2$ have different parities since otherwise $O_1 \cup O_2 \cup P_1 \cup P_2 \setminus \{a_2, a_2\}$ is a handcuff in $G$.

\item If $O_1$ is a simplicial vertex of $G$ in $B_1$ and $O_2$ is an even hole in $G_2$, then $O_1 \in Z$ and $P_1, P_2$ have different parities since otherwise $Q_1 \cup Q_2 \setminus \{a_1, a_2\}$ is an even path from $O_1$ to $O_2$, contrary to Observation \ref{obs:path_to_hole}.
\end{itemize}

Therefore, since $a_1, a_2$ were chosen arbitrarily, we may assume that all paths like $P_1$ are even, and all paths like $P_2$ are odd. Let $G_1'$ be the graph obtained from $G_1$ by adding a new vertex $v_1$ complete to $A_1$. Let $G_2'$ be the graph obtained from $G_2$ by adding two new vertices $v_2, v_3$ and making $v_3$ complete to $A_2$ and $v_2$ adjacent to only $v_3$. Let $Z_i' = Z_i \cup \{v_i\}$ for $i = 1, 2$. Then, by Lemma \ref{lem:heart_of_it_all}, $Z_i'$ is a consistent set of safe vertices in $G_i'$. Next, observe that $|V(G_1')| < |V(G)|$ and $|V(G_2')| < |V(G)|$ as $(V_1, V_2)$ is a rich $1$-join. Then, by the minimality of $G$, there exist a strong stable set $S_1'$ in $G_1'$ containing $Z_1'$ and a strong stable set $S_2'$ in $G_2'$ containing $Z_2'$. Notice that $S_1' \cap A_1 = \emptyset$ since $v_1 \in S_1'$, and $S_2' \cap A_2 \neq \emptyset$ since $A_2 \cup \{v_3\}$ is a maximal clique in $G_2'$ and $v_2 \in S_2'$ (and so $v_3 \notin S_2'$). Let $S_i = S_i' \setminus \{v_i\}$. Now, $S_1 \cup S_2$ is a strong stable set in $G$ containing $Z$. This is because if $K$ is a maximal clique of $G$, then either $K \subseteq V_1$ and $K \cap S_1 \neq \emptyset$, or $K \subseteq V_2$ and $K \cap S_2 \neq \emptyset$, or $K = A_1 \cup A_2$ and $A_2 \cap S_2 \neq \emptyset$.
\end{proof}

\begin{lemma}
Let $A, B$ be two disjoint cliques in a graph $G$ such that there is a square in $G | (A \cup B)$. Assume that if $a_1 \d b_1 \d b_2 \d a_2 \d a_1$ is a square in $G | (A \cup B)$ with $a_1, a_2 \in A$ and $b_1, b_2 \in B$, then no vertex of $V(G) \setminus (A \cup B)$ is mixed on $\{a_1, a_2\}$, and no vertex of $V(G) \setminus (A \cup B)$ is mixed on $\{b_1, b_2\}$. Then, $G$ admits a proper coherent $W$-join.
\label{lem:get_W_join}
\end{lemma}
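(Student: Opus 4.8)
The goal is to extract from the given "square-rich" cobipartite structure on $A\cup B$ an honest partition of $V(G)$ witnessing a proper coherent $W$-join. The natural candidate is to define $A^\star$ to be the union of all $a\in A$ that lie in some square of $G|(A\cup B)$ together with the "mixedness classes" forced by them, and symmetrically $B^\star$; then let $C$ (resp. $D$) be the vertices complete to $A^\star$ and anticomplete to $B^\star$ (resp. the reverse), $E$ the vertices complete to $A^\star\cup B^\star$, and $F$ the vertices anticomplete to $A^\star\cup B^\star$. The first step is therefore to understand, using the hypothesis, exactly how an outside vertex $v\in V(G)\setminus(A\cup B)$ can interact with the square-supporting parts of $A$ and $B$. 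The hypothesis says precisely that no outside vertex is mixed on the pair $\{a_1,a_2\}$ of any square, and likewise for $\{b_1,b_2\}$; so the plan is to show that the relation on the square-supporting vertices of $A$ given by "$a\sim a'$ iff $a,a'$ both lie in a common square" has the property that each outside vertex is complete or anticomplete to each $\sim$-class, and symmetrically for $B$.

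Next I would set $A^\star=\{a\in A: a$ lies in a square of $G|(A\cup B)\}$ and $B^\star=\{b\in B: b$ lies in a square$\}$, and argue $(A^\star,B^\star)$ is a homogeneous pair: since $A^\star,B^\star$ are cliques (subsets of cliques) and, by the previous paragraph applied across all squares, every outside vertex is complete-or-anticomplete to $A^\star$ and to $B^\star$. Here the key point is that if $v$ is adjacent to some $a\in A^\star$ and non-adjacent to some $a'\in A^\star$, one connects $a$ to $a'$ through a sequence of squares (using that every vertex of $A^\star$ is in a square, and that square-connectedness-type hypotheses make the squares "link up"—but note the lemma as stated does not assume square-connectedness of $(A,B)$, only the mixedness condition, so I must instead argue directly: pick a square through $a'$, say $a'\d b_1'\d b_2'\d a''\d a'$; $v$ not mixed on $\{a',a''\}$ forces $v$ non-adjacent to $a''$ too, and then chase adjacencies through shared neighbours $b_i'$ in $B$, again invoking non-mixedness on $B$-pairs). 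Then $(A^\star,B^\star)$ is a $W$-join because it contains a square, hence $A^\star$ is neither complete nor anticomplete to $B^\star$. It is coherent because the set of vertices complete to $A^\star\cup B^\star$ — these are exactly the vertices of $E$ — must be a clique: two non-adjacent such vertices together with a non-edge $a b$ inside the square would produce a claw or a larger forbidden configuration; more simply, two non-adjacent vertices each complete to a square form, with a suitable pair from the square, an induced subgraph that is not claw-free. Finally, properness: I would replace $(A^\star,B^\star)$ by the $W$-join obtained from Theorem 2.7-style cleaning, or argue directly that after removing from $A^\star$ the vertices complete to $B^\star$ and anticomplete to $B^\star$ (and symmetrically), what remains is still nonempty (it contains the square) and is now a proper $W$-join, while coherence is preserved because we only shrink $A^\star\cup B^\star$ and hence only enlarge $E$ — wait, enlarging $E$ could break the clique condition, so instead I would move the removed vertices into $C$, $D$, or $E$ according to their adjacency to the surviving sides, and check each relocation is consistent.

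The main obstacle I expect is the propagation/connectivity argument in the second paragraph: showing that an outside vertex cannot be adjacent to one square-vertex of $A$ and non-adjacent to another. Without assuming $(A,B)$ is square-connected, the squares in $G|(A\cup B)$ need not form a connected "hypergraph" on $A^\star$, so a priori $A^\star$ could split into blocks with different outside-adjacency patterns; the fix is either to additionally restrict to a single square-connected component of the square structure (and handle each component separately, since distinct components are complete or anticomplete to each other inside the cliques $A$, $B$), or to observe that the lemma is only ever applied in a context where square-connectedness already holds. I would state the argument for one square-connected block and note that the lemma's conclusion "$G$ admits a proper coherent $W$-join" only needs one such block to succeed. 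The rest — verifying the homogeneous-pair axioms, the $W$-join non-triviality, coherence via claw-freeness, and the bookkeeping to achieve properness — is routine case analysis on the adjacency of an outside vertex to the two cliques.
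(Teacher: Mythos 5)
Your plan diverges from the paper's proof in a way that leaves two genuine gaps, and I do not think either can be repaired within the framework you set up. First, the homogeneity step: you correctly identify that with $A^\star,B^\star$ defined as ``all vertices lying in a square'' the propagation argument is the crux, but neither of your fallbacks closes it. The fallback ``the lemma is only applied where square-connectedness already holds'' is not available: the lemma must be proved as stated, and in the paper's applications (Theorems 4.7 and 4.9) square-connectedness of the given pair is not known in advance. The fallback ``restrict to a square-connected component of the square structure'' hides the entire difficulty in the parenthetical claim that distinct components are complete or anticomplete to each other. If components are formed by linking squares that share a vertex, a vertex (of $A$, of $B$, or outside $A\cup B$) can be mixed on one \emph{side} of a component while being complete or anticomplete to every single square's pair on that side -- for instance when two squares of the component are linked only through a shared vertex of the other side -- and then the hypothesis of the lemma (which speaks only of single squares, and only of vertices outside $A\cup B$) gives you nothing, and your adjacency chase stalls. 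The paper's notion of a square-connected \emph{pair} (every partition of either side is straddled by a square) is precisely what converts ``mixed on a side'' into ``mixed on one square's pair''; the paper obtains such a pair not by taking components but by taking an inclusionwise maximal square-connected pair $(S,T)$ containing the two sides of one square and \emph{absorbing} any offending vertex (which is forced into $B$, is automatically complete to $T$, and creates a new square), contradicting maximality.

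Second, and more fatally, your coherence step is unsound. The lemma is stated for an arbitrary graph $G$, so claw-freeness is not available; and even in a claw-free graph, two non-adjacent vertices complete to a square create no claw (the six vertices contain no three pairwise non-adjacent vertices). In fact your candidate pair can simply fail to be coherent: take $G$ consisting of the square $a_1, b_1, b_2, a_2$ (with $A=\{a_1,a_2\}$, $B=\{b_1,b_2\}$) together with two non-adjacent vertices $e_1,e_2$ complete to the square. The hypothesis of the lemma holds, your construction outputs $(\{a_1,a_2\},\{b_1,b_2\})$, and the set of vertices complete to it is $\{e_1,e_2\}$, which is not a clique. The lemma's conclusion is still true -- $(\{a_1,b_1,e_1\},\{a_2,b_2,e_2\})$ is a proper coherent $W$-join -- but the witnessing pair uses vertices outside $A\cup B$, which your construction (and your proposed relocation of removed vertices among $C,D,E$) can never produce. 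The paper's maximality mechanism handles exactly this: if two non-adjacent vertices were complete to $S\cup T$, they could be added to $S$ and to $T$ to form a larger square-connected pair, contradicting maximality; and properness is automatic since every vertex of a square-connected pair lies in a square. So the missing ingredient in your proposal is the maximal-pair-plus-absorption idea, which simultaneously delivers homogeneity, properness, and coherence.
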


\begin{proof}
Let $H = a_1 \d b_1 \d b_2 \d a_2 \d a_1$ be a square in $G | (A \cup B)$ with $a_1 ,a_2 \in A$ and $b_1, b_2 \in B$. Consider an inclusion-wise maximal square-connected pair of cliques $(S, T)$ with $\{a_1, a_2\}\subseteq S \subseteq A$ and $\{b_1, b_2\} \subseteq T \subseteq B$. We claim that $(S, T)$ is a proper coherent $W$-join in $G$.

Since $G | (S \cup T$) contains a square, $S$ is neither complete nor anticomplete to $T$, and both $S$ and $T$ has at least two members. We show that for every vertex $v \in V(G) \setminus (S \cup T)$, $v$ is either complete or anticomplete to $S$, and either complete or anticomplete to $T$. Suppose $v \in V(G) \setminus (S \cup T)$ is mixed on $S$. Since $(S, T)$ is square-connected, there exists a square $s_1 \d s_2 \d t_2 \d t_1 \d s_1$ with $s_1, s_2 \in S$, $t_1, t_2 \in T$, such that $v$ is adjacent to $s_1$ and non-adjacent to $s_2$ (here, we apply the axiom of square-connectedness to $S \cap N(v)$ and $S \setminus N(v)$). Then $v \notin A$ as $A$ is a clique, and $v \notin V(G) \setminus (A \cup B)$ by the assumption of the lemma. Hence, $v \in B$. Then, since $B$ is a clique, $v$ is complete to $T$ and $v \d s_1 \d s_2 \d t_2 \d v$ is a square. It is now straightforward to check that $(S, T \cup \{v\})$ is a square-connected pair of cliques, contrary to the maximality of $(S, T)$. Therefore, no vertex in $V(G) \setminus (S \cup T)$ is mixed on $S$. The argument is symmetric if a vertex $v \in V(G) \setminus (S \cup T)$ is mixed on $T$. Hence, no vertex in $V(G) \setminus (S \cup T)$ is mixed on $T$ either. This proves that $(S, T)$ is a $W$-join.

Since $(S, T)$ is square-connected, every vertex of $S \cup T$ is in a square. Therefore, no vertex of $S$ is complete or anticomplete to $T$, and no vertex of $T$ is complete or anticomplete to $S$. Hence, $(S, T)$ is a proper $W$-join. Next, let $E$ be the set of all vertices in $V(G) \setminus (S \cup T)$ that are complete to $(S \cup T)$. We show that $E$ is a clique. Let $e_1, e_2 \in E$ be non-adjacent. Then, $S \cup \{e_1\}$ and $T \cup \{e_2\}$ are cliques, and $\{e_1, a_2, e_2, b_1\}$ is a square. So, $(S \cup \{e_1\}, T \cup \{e_2\})$ is square-connected, a contradiction to the maximality of $(S, T)$. Hence, $E$ is a clique, and $(S, T)$ is a proper coherent $W$-join.
\end{proof}

\begin{theorem}
\label{thm:no_1_join}
If $(G,Z)$ is a suspect, then $G$ does not admit a $1$-join.
\end{theorem}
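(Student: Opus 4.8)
The plan is to use Theorem~\ref{thm:no_rich_1_join} to reduce to a \emph{non-rich} $1$-join --- which forces a pendant vertex --- and then to build the required strong stable set of $G$ from one in a strictly smaller graph. Suppose $G$ admits a $1$-join $(V_1,V_2)$ with $A_1,A_2$ as in the definition. By Theorem~\ref{thm:no_rich_1_join} this join is not rich, so we may assume $|V_1|\le 2$; since $A_1$ and $V_1\setminus A_1$ are non-empty, $|V_1|=2$, say $V_1=\{a_1,b_1\}$ with $A_1=\{a_1\}$. Then $b_1$ is anticomplete to $V_2$, so, as $G$ is connected by Theorem~\ref{thm:suspect_starters} and $|V(G)|\ge 3$, its only neighbour is $a_1$; thus $b_1$ is pendant, hence simplicial, and $N(a_1)=\{b_1\}\cup A_2$. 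Write $W:=V_2\setminus A_2\ne\emptyset$ and $G_2:=G|V_2$.

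Next I would record some easy facts. Since $b_1$ is simplicial, $b_1\in Z$ by Theorem~\ref{thm:suspect_starters}, while Lemma~\ref{lem:properties_1_join} gives $Z\cap(A_1\cup A_2)=\emptyset$; hence $a_1\notin Z$ and $Z=\{b_1\}\cup Z_2$ with $Z_2:=Z\cap W$. Claw-freeness forces $N_G(a)\cap W$ to be a clique for every $a\in A_2$, since otherwise $a_1$ together with two non-adjacent vertices of $N_G(a)\cap W$ is a claw centred at $a$; consequently $A_2$ is a simplicial clique of $G_2$. Moreover every $a\in A_2$ has a neighbour in $W$, for otherwise $N_G(a)=(A_2\setminus a)\cup\{a_1\}$ is a clique and $a\notin Z$ is a simplicial vertex of $G$, contradicting Theorem~\ref{thm:suspect_starters}. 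Now, running through the maximal cliques of $G$ --- namely $\{a_1,b_1\}$, the clique $\{a_1\}\cup A_2$, and the maximal cliques of $G$ contained in $V_2$, which coincide with those of $G_2$ --- one verifies that if $S_2$ is a strong stable set of $G_2$ with $Z_2\subseteq S_2$ and $S_2\cap A_2\ne\emptyset$, then $S_2\cup\{b_1\}$ is a strong stable set of $G$ containing $Z$, contradicting that $(G,Z)$ is a suspect. So it suffices to produce such an $S_2$.

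By minimality, $G_2$ --- being smaller, claw-free, innocent, and having $Z_2$ as a consistent set of safe vertices by Observation~\ref{obs:consistent_safe} --- has a strong stable set $S_2$ with $Z_2\subseteq S_2$. If $S_2\cap A_2\ne\emptyset$ we are done, so assume $S_2\cap A_2=\emptyset$. Then $A_2$ is not a maximal clique of $G_2$, so fix a maximal clique $\hat K\supseteq A_2$ of $G_2$ and let $w^{*}$ be the unique vertex of $S_2\cap\hat K$; then $w^{*}\in W$ and $w^{*}$ is complete to $A_2$. Note $w^{*}\notin Z$: otherwise $b_1\d a_1\d a\d w^{*}$, for any $a\in A_2$, is an induced path of length three between two members of $Z$, contradicting consistency. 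Using that each clique $\{a\}\cup(N_G(a)\cap W)$ extends to a maximal clique of $G_2$ whose $W$-part is precisely $N_G(a)\cap W$, one sees in fact that $S_2\cap(N_G(a)\cap W)=\{w^{*}\}$ for every $a\in A_2$. I would then patch $S_2$ by replacing $w^{*}$ with a well-chosen $a^{*}\in A_2$: the set $S_2':=(S_2\setminus\{w^{*}\})\cup\{a^{*}\}$ is stable, because $N_G(a^{*})\cap W$ is a clique meeting $S_2$ only in $w^{*}$ and $S_2\cap A_2=\emptyset$, and it contains $Z_2\cup\{a^{*}\}$ since $w^{*}\notin Z$.

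The crux, and the step I expect to be the main obstacle, is to choose $a^{*}$ so that $S_2'$ is \emph{strong}, equivalently so that $a^{*}$ lies in every maximal clique of $G_2$ that $S_2$ meets only in $w^{*}$. Here the plan is a contradiction argument: if no $a^{*}\in A_2$ worked, then for every $a\in A_2$ there would be a maximal clique $M_a$ of $G_2$ with $S_2\cap M_a=\{w^{*}\}$ and $a\notin M_a$, hence a vertex $x_a\in W\cap M_a$ non-adjacent to $a$ but adjacent to $w^{*}$; feeding these non-adjacencies into the rigid local structure --- $A_2$ a clique complete to both $a_1$ and $w^{*}$, $a_1$ anticomplete to $W$, and $b_1$ pendant --- one should exhibit a square, an odd hole, an odd prism, a handcuff, or an eye mask in $G$, using that $b_1$ is safe (so by Observation~\ref{obs:path_to_hole} it sits at odd distance from every hole and every clown hat) to pin down the parities; this contradicts innocence of $G$, or, via Lemma~\ref{lem:get_W_join} and Theorem~\ref{thm:no_prop_coh_W}, the fact that $G$ admits no proper coherent $W$-join. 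The case $|A_2|=1$ is a clean illustration of the mechanism: there the unique $a^{*}$ is already simplicial in $G_2$, and transporting the definitions of ``safe'' and ``even'' across the induced path $b_1\d a_1\d a^{*}$ (each valid path or path-to-a-clown-hat from $a^{*}$ in $G_2$ prolongs to one from $b_1$ in $G$, whose parity is controlled because $b_1$ is safe and the pairs of $Z$ are even) shows that $Z_2\cup\{a^{*}\}$ is a consistent set of safe vertices in $G_2$, so minimality directly yields a strong stable set of $G_2$ containing it.
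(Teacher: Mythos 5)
Your reductions at the start are fine and match the paper: non-richness of the $1$-join (Theorem~\ref{thm:no_rich_1_join}) gives the pendant vertex $b_1\in Z$, Lemma~\ref{lem:properties_1_join} gives $Z\cap(A_1\cup A_2)=\emptyset$, $A_2$ is a simplicial clique of $G_2$ each of whose vertices has a neighbour in $W$, and it suffices to find a strong stable set of $G_2$ containing $Z_2$ and meeting $A_2$. But the decisive step of your plan --- producing $a^{*}\in A_2$ lying in every maximal clique of $G_2$ that $S_2$ meets only in $w^{*}$, or deriving a forbidden structure otherwise --- is exactly the hard content of the theorem, and you have only a hope there, not an argument. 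The obstruction is concrete: a maximal clique $M$ of $G_2$ with $S_2\cap M=\{w^{*}\}$ may contain vertices of $W$ with \emph{no} neighbour in $A_2$ at all (claw-freeness does not forbid a neighbour of $w^{*}$ that is anticomplete to $A_2$), and then no choice of $a^{*}$ can work; your vertices $x_a$ then carry no usable adjacency to $A_2$, and it is not clear how ``feeding these non-adjacencies into the local structure'' yields an odd hole, prism, handcuff, eye mask, or the mixed-vertex hypotheses of Lemma~\ref{lem:get_W_join}. Parity control via safety of $b_1$ does not settle this either, since the offending cliques live entirely inside $W\cup\{w^{*}\}$. So as written the proposal has a genuine gap at its crux.

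It is worth seeing how the paper gets around this, because your remark about the case $|A_2|=1$ is in fact the right germ: rather than repairing an arbitrary strong stable set of all of $G_2$, the paper applies the inductive hypothesis to the smaller graph $G\,|\,(B_2\cup\{a_2\})$ for an \emph{arbitrary} $a_2\in A_2$ (deleting the rest of $A_2$), and uses Lemma~\ref{lem:heart_of_it_all} with the pendant path $a_2\d a_1\d b_1$ to certify that $Z_2\cup\{a_2\}$ is a consistent set of safe vertices there --- this is precisely where the stronger form of Theorem~\ref{thm:stronger} earns its keep, since it \emph{forces} $a_2$ into the partial solution instead of hoping a swap exists. From the failure of $S\cup\{b_1\}$ to be strong one then reads off that the offending maximal clique satisfies $N_1'\subseteq C\subseteq A_2\cup N_1$, and, crucially, by varying $a_2$ over $A_2$, that no vertex of $A_2$ is complete to $N_1$; together with the auxiliary claims (4.7.2)--(4.7.4) this shows $N_1$ is a clique, whence Lemma~\ref{lem:no_C4_order} produces a square in $G\,|\,(A_2\cup N_1'')$ and Lemma~\ref{lem:get_W_join} a proper coherent $W$-join, contradicting Theorem~\ref{thm:no_prop_coh_W}. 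None of these structural facts are available from your exchange setup, so to complete your proof you would essentially have to redo this analysis; I would recommend reorganizing the argument around forcing a vertex of $A_2$ via safety on $G\,|\,(B_2\cup\{a_2\})$ rather than around a one-vertex swap.
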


\begin{proof}
Assume that $G$ admits a $1$-join $(V_1, V_2)$ and let $A_1, A_2$ be as in the definition of a $1$-join. For $i = 1, 2$, let $B_i = V_i \setminus A_i$ and $Z_i = V_i \cap Z$, and let $G_i = G | V_i$. Then, by Lemma \ref{lem:properties_1_join}, $Z \cap (A_1 \cup A_2) = \emptyset$, and so $Z_i = B_i \cap Z$. Moreover, by Theorem \ref{thm:no_rich_1_join}, $(V_1, V_2)$ is not a rich $1$-join. Therefore, we may assume that $|B_1| = |A_1| = 1$. Let $B_1 = \{b_1\}$ and $A_1 = \{a_1\}$. Then by Theorem \ref{thm:suspect_starters}, $b_1 \in Z$ since it is simplicial in $G$. Note that every vertex in $A_2$ has a neighbor in $B_2$ since no vertex of $A_2$ is simplicial because $Z \cap A_2 = \emptyset$. Also, $A_2$ is a simplicial clique in $G_2$, for otherwise if $b_2, b_2' \in B_2$ are two non-adjacent neighbors of some vertex $a_2$ in $A_2$, then $\{a_1, a_2, b_2, b_2'\}$ is a claw.

Let $N_1$ be the set of vertices in $B_2$ with a neighbor in $A_2$. Let $N_1'$ be the set of vertices in $N_1$ that are complete to $A_2$ and let $N_1'' = N_1 \setminus N_1'$. By definition, every vertex of $N_1''$ has a neighbor and a non-neighbor in $A_2$.

\vspace{-0.6cm}

\begin{equation}\label{eqn:where_is_Z}
\longbox{{\it $N_1 \cap Z = \emptyset$. Thus, $Z \subseteq (B_2 \setminus N_1) \cup \{b_1\}$.}} \tag{$4.7.1$}
\end{equation}

\begin{proof}
This follows from the fact that there is a three-edge path from $b_1$ to every vertex of $N_1$.
\end{proof}

\vspace{-0.8cm}

\begin{equation}\label{eqn:A2_not_maximal}
\longbox{{\it $A_2$ is not a maximal clique of $G_2$, so $N_1' \neq \emptyset$. Also, $N_1'$ is a clique and complete to $N_1''$.}} \tag{$4.7.2$}
\end{equation}

\begin{proof}
If $A_2$ is a maximal clique of $G_2$, then by the minimality of $G$, there exists a strong stable set $S_2$ in $G_2$ containing $Z_2$, and $S_2 \cup \{b_1\}$ is a strong stable set in $G$ containing $Z$. Therefore, $A_2$ is not a maximal clique of $G_2$. Then, some vertex of $B_2$ is complete to $A_2$, and so $N_1' \neq \emptyset$. Also, $N_1'$ is a clique and it is complete to $N_1''$ since $A_2$ is a simplicial clique in $G_2$.
\end{proof}

\vspace{-0.8cm}

\begin{equation}\label{eqn:n2_complete}
\longbox{{\it If $n_2 \in B_2 \setminus N_1$ is adjacent to a vertex $n_1'$ of $N_1'$, then $n_2$ is complete to $N_1''$.}} \tag{$4.7.3$}
\end{equation}

\begin{proof}
Let $n_1''$ be a non-neighbor of $n_2$ in $N_1''$ and $a_2$ be a non-neighbor of $n_1''$ in $A_2$. Then, $\{n_1', a_2, n_1'', n_2\}$ is a claw, a contradiction.
\end{proof}

\vspace{-0.8cm}

\begin{equation}\label{eqn:attachments_of_C}
\longbox{{\it For every component $C$ of $B_2 \setminus N_1$, $N(C) \cap N_1$ is a clique.}} \tag{$4.7.4$}
\end{equation}

\begin{proof}
Let $n, n' \in N(C) \cap N_1$ and assume that they are non-adjacent. Then $n, n' \in N_1''$. Since $A_2$ is a simplicial clique in $G_2$, no vertex of $A_2$ is complete to $\{n, n'\}$, and so there exist $a, a' \in A_2$ such that $n \d a \d a' \d n'$ is a path. Let $P$ be a path from $n$ to $n'$ with $P^* \subseteq C$. Since $V(P) \cup \{a, a'\}$ is a hole, it follows that $P$ is odd. Now, let $n_1'$ be a vertex in $N_1'$. Since $V(P) \cup \{n_1'\}$ is not an odd hole, $n_1'$ has a neighbor $p$ in $P^*$. Since $P$ is odd, we may assume that $p$ is not adjacent to $n$, contrary to \eqref{eqn:n2_complete}.
\end{proof}

By \eqref{eqn:where_is_Z}, $Z \subseteq (B_2 \setminus N_1) \cup \{b_1\}$. Let $a_2 \in A_2$ and let $G_2' = G | (B_2 \cup \{a_2\})$. Then, by Lemma \ref{lem:heart_of_it_all} applied to $G | (\{b_1, a_1, a_2\} \cup B_2)$, $Z_2' = Z_2 \cup \{a_2\}$ is a consistent set of safe vertices in $G_2'$. Let $S$ be a strong stable set in $G_2'$ containing $Z_2'$. Since $S \cup \{b_1\}$ is not a strong stable set in $G$, there exists a maximal clique $C$ of $G$ such that $C \cap (S \cup \{b_1\}) = \emptyset$. Since $C$ is not a maximal clique of $G_2'$, and since $C \neq \{a_1, b_1\}$, we deduce that $C$ meets $A_2$. Then, $C \subseteq \{a_1\} \cup A_2 \cup N_1$. But since $a_2 \notin C$, $C$ contains a non-neighbor of $a_2$, so $C \cap N_1 \neq \emptyset$; consequently $a_1 \notin C$. It follows that $N_1' \subseteq C \subseteq A_2 \cup N_1$. Also, since $a_2$ was chosen arbitrarily, we deduce that no vertex of $A_2$ is complete to $N_1$. Next, observe that $C \cap N_1$ is not a maximal clique of $G_2'$, for otherwise $S$ meets $C \cap N_1$, and in particular, $C \cap S \neq \emptyset$, a contradiction. Thus, there exists a vertex $n_2 \in B_2 \setminus N_1$ such that $n_2$ is complete to $C \cap N_1$. In particular, $n_2$ is complete to $N_1'$, and by \eqref{eqn:n2_complete}, $n_2$ is complete to $N_1$. Then, by \eqref{eqn:attachments_of_C}, $N_1$ is a clique.

Therefore, $G | (A_2 \cup N_1'')$ is a cobipartite graph. Since no vertex of $N_1''$ is anticomplete to $A_2$, and no vertex of $A_2$ is complete to $N_1''$, by Lemma \ref{lem:no_C4_order}, we deduce that there is a square in $G | (A_2 \cup N_1'')$. Let $H = p \d q \d q' \d p' \d p$ be a square in $G | (A_2 \cup N_1'')$ with $p ,q \in A_2$ and $p', q' \in N_1''$. Notice that no vertex of $V(G) \setminus (A_2 \cup N_1'')$ is mixed on $\{p, q\}$. Assume that a vertex $v \in V(G) \setminus (A_2 \cup N_1'')$ is mixed on $\{p', q'\}$, say $v$ is adjacent to $p'$ and non-adjacent to $q'$. Then, $v \in B_2 \setminus N_1$, and so $v$ is anticomplete to $\{p, q\}$. Then $\{p', v, p, q'\}$ is a claw, a contradiction. Thus, no vertex of $V(G) \setminus (A_2 \cup N_1'')$ is mixed on $\{p', q'\}$. Now, by Lemma \ref{lem:get_W_join} applied to the pair of cliques $(A_2, N_1'')$, $G$ admits a proper coherent $W$-join, a contradiction to Theorem \ref{thm:no_prop_coh_W}.
\end{proof}

Next, we prove a theorem about the line graphs of bipartite multigraphs. It is straightforward that if a graph $G = L(H)$ does not contain $L(F)$ for some graph $F$ as an induced subgraph, then $H$ does not contain $F$ as a subgraph (not necessarily induced). Observe that handcuffs and eye masks are the line graphs of what we call ``bicycles"; and that odd prisms are the line graphs of ``thetas". A \emph{bicycle} is a graph formed by connecting two vertex-disjoint even cycles $C_1, C_2$ via an even path from a vertex of $C_1$ to a vertex of $C_2$ where the length of the path is allowed to be zero. A \emph{theta} is a graph consisting of two non-adjacent vertices $a, b$ and three even paths $P_1, P_2, P_3$ of length at least two, each joining $a, b$ and otherwise vertex-disjoint (see Figure \ref{fig:bicycle_theta}, where the dotted lines represent even paths). Let us call a bipartite graph \emph{harmless} if its line graph is innocent. Then, harmless graphs do not contain bicycles or thetas as a subgraph (not necessarily induced).

\vspace{-0.1cm}

\begin{figure}[h]
\begin{center}
\begin{tikzpicture}[scale=0.22]

\node[inner sep=2.5pt, fill=black, circle] at (-4, 4)(v1){}; 
\node[inner sep=2.5pt, fill=black, circle] at (-4, -4)(v2){}; 
\node[inner sep=2.5pt, fill=black, circle] at (-2, 2.6)(v3){}; 
\node[inner sep=2.5pt, fill=black, circle] at (-2, -2.6)(v4){};

\node[inner sep=2.5pt, fill=black, circle] at (-0.7, 0)(v5){}; 
\node[inner sep=2.5pt, fill=black, circle] at (3, 0)(v6){}; 
\node[inner sep=2.5pt, fill=black, circle] at (6, 0)(v7){}; 
\node[inner sep=2.5pt, fill=black, circle] at (9.7, 0)(v8){};

\node[inner sep=2.5pt, fill=black, circle] at (11, 2.6)(v9){}; 
\node[inner sep=2.5pt, fill=black, circle] at (11, -2.6)(v10){}; 
\node[inner sep=2.5pt, fill=black, circle] at (13, 4)(v11){}; 
\node[inner sep=2.5pt, fill=black, circle] at (13, -4)(v12){}; 

\draw[black, dotted, thick] (v1)  .. controls +(-6,0) and +(-6,0) .. (v2);
\draw[black, thick] (v1) -- (v3);
\draw[black, thick] (v2) -- (v4);
\draw[black, thick] (v3) -- (v5);
\draw[black, thick] (v4) -- (v5);
\draw[black, thick] (v5) -- (v6);
\draw[black, dotted, thick] (v6) -- (v7);
\draw[black, thick] (v7) -- (v8);
\draw[black, thick] (v8) -- (v9);
\draw[black, thick] (v8) -- (v10);
\draw[black, thick] (v9) -- (v11);
\draw[black, thick] (v10) -- (v12);
\draw[black, dotted, thick] (v11)  .. controls +(6,0) and +(6,0) .. (v12);

\node at (-5,0) {even};
\node at (4.5,1.9) {$\xleftarrow{\makebox[0.43cm]{}}$ even $\xrightarrow{\makebox[0.43cm]{}}$};
\node at (14,0) {even};

\end{tikzpicture}
\hspace{1.5cm}
\begin{tikzpicture}[scale=0.27]

\node[inner sep=2.5pt, fill=black, circle] at (2, 2.6)(v3){}; 
\node[inner sep=2.5pt, fill=black, circle] at (2, -2.6)(v4){};

\node[inner sep=2.5pt, fill=black, circle] at (-0.7, 0)(v5){}; 
\node[inner sep=2.5pt, fill=black, circle] at (3, 0)(v6){}; 
\node[inner sep=2.5pt, fill=black, circle] at (9, 0)(v7){}; 
\node[inner sep=2.5pt, fill=black, circle] at (12.7, 0)(v8){};

\node[inner sep=2.5pt, fill=black, circle] at (10, 2.6)(v9){}; 
\node[inner sep=2.5pt, fill=black, circle] at (10, -2.6)(v10){}; 

\draw[black, thick] (v3) -- (v5);
\draw[black, thick] (v4) -- (v5);
\draw[black, thick] (v5) -- (v6);
\draw[black, dotted, thick] (v6) -- (v7);
\draw[black, dotted, thick] (v3) -- (v9);
\draw[black, dotted, thick] (v4) -- (v10);
\draw[black, thick] (v7) -- (v8);
\draw[black, thick] (v8) -- (v9);
\draw[black, thick] (v8) -- (v10);

\node at (5.9,4.5) {$\xleftarrow{\makebox[0.62cm]{}}$ even paths $\xrightarrow{\makebox[0.62cm]{}}$};

\end{tikzpicture}
\end{center}
\vspace{-0.35cm}
\caption{Bicycles and thetas (the dotted lines represent even paths)}
\label{fig:bicycle_theta}
\end{figure}
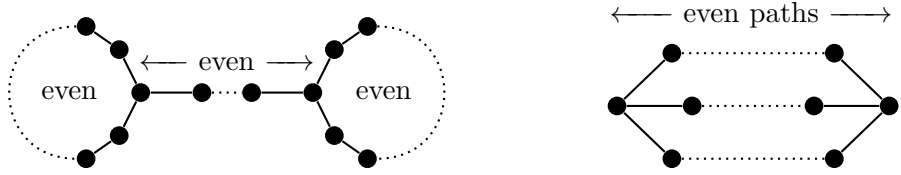

In what follows, when $G = L(B)$, for an edge $e \in E(B)$, we let $\ell(e)$ denote the vertex of $G$ that corresponds to $e$ in the line graph, and for $F \subseteq E(B)$, we let $\ell(F) = \{\ell(e) : e \in F\}$. The following theorem first appeared in the junior thesis of Andrei Graur \cite{Andrei}. Here, we provide a simpler proof.

\begin{theorem}
\label{thm:not_line_graph_of_bip}
If $(G,Z)$ is a suspect, then $G$ is not the line graph of a bipartite multigraph.
\end{theorem}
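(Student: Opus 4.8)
The plan is to argue by contradiction: suppose $G = L(B)$ for a bipartite multigraph $B$, and derive a strong stable set containing $Z$, contradicting that $(G,Z)$ is a suspect. By Observation~\ref{obs:parallel_edges} and Theorem~\ref{thm:suspect_starters} ($G$ has no twins), $B$ has no parallel edges, so $B$ is a bipartite \emph{graph}. Since $G$ is innocent, $B$ is harmless, hence contains no bicycle and no theta as a (not necessarily induced) subgraph. The translation table I would use is: simplicial vertices of $G$ correspond to edges $e=uv$ of $B$ where $u$ or $v$ has degree one (a \emph{pendant edge}); a strong stable set in $L(B)$ corresponds to a set $M \subseteq E(B)$ of edges that is a matching (stable) and meets every maximal clique of $L(B)$; and maximal cliques of $L(B)$ are either the set of all edges incident to a fixed vertex of degree $\ge 2$, or a single edge $uv$ with $\deg u = \deg v = 1$. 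Thus $M$ must be a matching such that every vertex of $B$ of degree $\ge 2$ is $M$-saturated, and additionally every isolated edge is in $M$; equivalently, $M$ is a matching saturating every non-isolated vertex except possibly leaves --- essentially I need a matching of $B$ that dominates all edges, i.e., an edge set hitting the neighborhood-cliques. Concretely: I want a maximal matching $M$ with the extra property that every vertex of degree $\ge 2$ is covered, and such that $Z \subseteq \ell(M)$.

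First I would record what $Z$ looks like: by Theorem~\ref{thm:suspect_starters} every simplicial vertex of $G$ lies in $Z$ and $Z$ is stable, so each $z \in Z$ is $\ell(e_z)$ for a pendant edge $e_z = u_z v_z$ with, say, $v_z$ a leaf, and the $e_z$ form a matching in $B$; moreover safeness of $z$ translates (via Observation~\ref{obs:path_to_hole} and the definition of a clown, whose line-graph preimage is an even cycle with a pendant edge attached at one vertex --- precisely the structure forbidding short augmenting/alternating walks) into a parity condition on alternating walks from $e_z$ to even cycles of $B$. The key structural claim I would prove is: \emph{a harmless bipartite graph $B$ has a maximal matching $M$ saturating every vertex of degree at least two, with all forced pendant edges $e_z$ ($z \in Z$) in $M$.}

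I would obtain this by induction on $|V(B)|+|E(B)|$, peeling off pendant edges. If $B$ has a leaf $v$ with neighbor $u$: if $\deg u = 1$ the edge $uv$ is isolated and must be in $M$, so put it in $M$ and recurse on $B \setminus \{u,v\}$ (checking the inductive hypotheses transfer --- $Z$ minus that vertex stays a consistent set of safe vertices by Observation~\ref{obs:consistent_safe}). If $\deg u \ge 2$: if some edge at $u$ is forced to be in $Z$, we are told to use that one; otherwise pick a leaf-edge at $u$ if one is forced by $Z$, else pick any edge $uv$ at $u$ (prefer a pendant one), add it to $M$, delete $N[u] := \{u,v : uv$ touches $u\}$ carefully (or rather delete $u$ and all its neighbors that become isolated) --- the standard trick is: add $uv$ to $M$, then delete $u$ and $v$, and observe the other neighbors of $u$ drop in degree but any that become leaves are now harmless; recurse. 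If $B$ has no leaf at all, every vertex has degree $\ge 2$, so (since $B$ is bipartite with no theta and no bicycle, hence in fact --- I claim --- a disjoint union of even cycles and isolated-free "caterpillar"-like pieces) $B$ is a disjoint union of even cycles and I can take a perfect matching on each cycle; here I must check $Z = \emptyset$ in this case, which follows because a vertex of $G$ in a cycle-component corresponds to a non-simplicial edge (both endpoints degree two), and $Z$ consists of simplicial vertices.

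The main obstacle I anticipate is the parity bookkeeping in the cycle case and at the interface between pendant edges and cycles: when $B$ contains an even cycle $C$ reachable from a forced pendant edge $e_z$, I need the alternating walk from $e_z$ into $C$ to have the right parity so that after matching $e_z$ and propagating along the unique forced alternating path, I can still perfectly match $C$. This is exactly what the clown/handcuff-freeness (i.e., harmlessness, absence of bicycles) buys me: a bicycle is precisely two even cycles joined by an even path, and its absence forbids the bad parity configuration; the safeness condition on $Z$ is the line-graph shadow of "no clown causes a parity obstruction." So I would formalize the induction so that at each step the deletions preserve harmlessness (trivial, as induced-subgraph-ish operations on $B$ only remove edges/vertices) and preserve the safeness/consistency of the residual $Z$ (via the observations already in \S\ref{sec:lemmas}), and the parity condition is carried as an explicit invariant. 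Once the matching $M$ is built, $\ell(M)$ is the desired strong stable set of $G$ containing $Z$, contradicting suspecthood, and the theorem follows.
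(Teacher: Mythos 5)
Your translation of the problem is the same as the paper's (no parallel edges by twin-freeness, suitable matchings covering all degree-$\ge 2$ vertices, $Z$ corresponding to singular/pendant edges, harmlessness of $B$), but the argument you build on it has genuine gaps. First, your leafless base case rests on a false claim: a harmless bipartite graph of minimum degree at least two need not be a disjoint union of even cycles. Two vertex-disjoint even cycles joined by a path of \emph{odd} length (e.g.\ a single edge) contain no bicycle -- the definition requires the connecting path to be even -- and no theta; likewise two vertices joined by three internally disjoint odd paths is harmless. So in that case you would have to prove outright that every harmless bipartite graph of minimum degree two has a perfect matching, which is essentially the statement being proved, not something you may assert. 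Second, the peeling step fails as described. After you place a pendant edge $uv$ (with $v$ a leaf, $\deg u\ge 2$) into $M$ and delete $u$ and $v$, every degree-two neighbour $w$ of $u$ drops to degree one, and your inductive statement no longer requires $w$ to be saturated, although the maximal clique of $L(B)$ at $w$ still must be hit. Worse, the greedy choice can be outright wrong: let $B$ be the four-cycle $u \d w_1 \d x \d w_2 \d u$ with a pendant edge $uv$. This $B$ is harmless and $\{uw_1, xw_2\}$ is a suitable matching, but \emph{no} suitable matching contains $uv$ (once $u$ is used up, $w_1$ and $w_2$ cannot both be saturated), while your rule ``prefer a pendant edge at $u$'' selects exactly $uv$. (Here $\ell(uv)$ is a clown hat, hence not safe and not in $Z$, so forcedness does not rescue the choice.) So the ``parity bookkeeping'' you defer is not bookkeeping; it is the entire difficulty, and no invariant is supplied that makes these choices correctly or survives the deletions.

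For comparison, the paper does not peel leaves and does not prove a standalone matching lemma. It exploits the minimality of the suspect throughout: consistency of $Z$ gives that there are at most two singular edges with degree-one ends on opposite sides, minimality gives that vertices of degree at least three form a stable set, so there is a degree-$\ge 3$ vertex $v$ with a degree-two neighbour $u$; Theorem~\ref{thm:no_prop_coh_W} rules out a second common neighbour of $v$ and $w$ (the other neighbour of $u$); then $B$ is modified by deleting $u$ and identifying $v$ with $w$, the crux being the case analysis showing this contracted graph is still harmless (claim $(4.8.4)$), after which a suitable matching of the contraction lifts back. None of these ingredients (in particular, an argument that your reduction preserves both harmlessness and the saturation/parity constraints) appears in your plan, so as it stands the proposal does not go through.
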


\begin{proof}
Let $(G,Z)$ be a suspect and assume that $G$ is the line graph of a bipartite multigraph $B$, i.e., $G = L(B)$. By Observation \ref{obs:parallel_edges}, $B$ is a bipartite graph as $G$ does not admit twins. Also, $B$ is connected (and so it has a unique bipartition) since otherwise $G$ is not connected, contrary to Theorem \ref{thm:suspect_starters}. Moreover, since $G$ is an innocent graph, $B$ is harmless.

Since the maximal cliques in $G$ correspond to stars in $B$ with centers of degree at least two, a strong stable set in $G$ corresponds to a matching in $B$ that covers all the vertices of degree at least two. Let us call such a matching of $B$ \emph{suitable}. Also, for a bipartite graph $H$, let $P(H)$ denote the set of vertices of $H$ with degree at least two.

Let $Z = \ell(Z_B)$. Then, $Z_B$ is a matching in $B$ as $Z$ is a stable set in $G$. Finding a strong stable set in $G$ containing $Z$ is the same as finding a suitable matching in $B$ containing $Z_B$. Let us call an edge $e$ of $B$ \emph{singular} if one of the endpoints of $e$ has degree one in $B$. Observe that if $v \in Z$, then $v = \ell(e)$ for some singular edge $e$ of $B$. By Theorem \ref{thm:suspect_starters}, every simplicial vertex of $G$ is in $Z$, and so, $Z$ is the set of simplicial vertices of $G$. Therefore, $Z_B \subseteq E(B)$ is precisely the set of singular edges in $B$. 

\vspace{-0.35cm}

\begin{equation}\label{eqn:singular_edges}
\longbox{{\it For every two singular edges $e_1, e_2$ in $B$, their degree-one endpoints are on different sides of the bipartition of $B$. Thus, $|Z_B| \leq 2$.}} \tag{$4.8.1$}
\end{equation}

\begin{proof}
Otherwise, there is an odd path in $G$ from $\ell(e_1)$ to $\ell(e_2)$, contrary to $\ell(e_1), \ell(e_2) \in Z$.
\end{proof}

\vspace{-0.8cm}

\begin{equation}\label{eqn:deg_3_stable}
\longbox{{\it The vertices in $B$ with degree at least three form a stable set.}} \tag{$4.8.2$}
\end{equation}

\begin{proof}
Let $u, v \in V(B)$ have degree at least three and assume $u$ and $v$ are adjacent. Let $e = uv$. Note that $e \notin Z_B$. Let $\hat B = B \setminus e$. Then, $deg_{\hat B}(u) \geq 2$ and $deg_{\hat B}(v) \geq 2$, and thus $P(B) = P(\hat B)$. Now, by the minimality of $G$, there exists a suitable matching $M$ of $\hat B$ containing $Z_B$, and since $P(B) = P(\hat B)$, $M$ is a suitable matching of $B$, a contradiction.
\end{proof}

\vspace{-0.8cm}

\begin{equation}\label{eqn:deg_3_exists}
\longbox{{\it $B$ has a vertex of degree at least three.}} \tag{$4.8.3$}
\end{equation}

\begin{proof}
Let $B$ have no vertex degree of at least three. Then, as $B$ is connected, $B$ is either an even cycle $c_1 \d c_2 \d \dots \d c_k \d c_1$ where $k \geq 4$ even, or an odd path $p_1 \d p_2 \d \dots \d p_m$ where $m \geq 2$ even. In the former case, $Z_B = \emptyset$, and $\{c_1c_2, c_3c_4, \dots, c_{k-1}c_k\}$ is a suitable matching in $B$. In the latter case, $Z_B = \{p_1p_2, p_{m-1}p_m\}$, and $\{p_1p_2, p_3p_4, \dots, p_{m-1}p_m\}$ is a suitable matching in $B$ that contains $Z_B$.
\end{proof}

By \eqref{eqn:deg_3_exists}, there exists a vertex $v \in V(B)$ of degree at least three. By \eqref{eqn:singular_edges} and \eqref{eqn:deg_3_stable}, $v$ has a neighbor $u$ of degree two in $B$. Let $N_B(u) = \{v, w\}$. We claim that $N_B(v) \cap N_B(w) = \{u\}$. Let $x$ be a common neighbor of $v$ and $w$ in $B$ such that $x \neq u$. By \eqref{eqn:deg_3_stable}, $deg_B(x) = 2$. Let $A = \{\ell(xw), \ell(wu)\}$ and $B = \{\ell(uv), \ell(vx)\}$. Then, it is straightforward to check that $(A, B)$ is a proper coherent homogeneous pair in $G$, contrary to Theorem \ref{thm:no_prop_coh_W}. Therefore, $N_B(v) \cap N_B(w) = \{u\}$.

For $x \in \{v, w\}$, let $E_x = \{e \in E(B) : e \text{ is incident with } x\} \setminus \{xu\}$. We proved that $E_v \cap E_w = \emptyset$. We have $|E_v| \geq 2$ since $deg_B(v) \geq 3$, but possibly $E_w = \emptyset$. Let $\hat B$ be the graph obtained from $B$ by deleting $u$, and identifying the vertices $v, w$. It is easy to see that $\hat B$ is still bipartite and connected.

\vspace{-0.5cm}

\begin{equation}\label{eqn:B_hat_harmless}
\mbox{{\it $\hat B$ is harmless.}} \tag{$4.8.4$}
\end{equation}
\begin{proof}
Suppose $\hat B$ is not harmless. Since line graphs of bipartite graphs do not contain odd holes or antiholes of length at least seven, and since an antihole of length six is also an odd prism, it follows that $\hat B$ contains either a bicycle or a theta as a subgraph (not necessarily induced). We will show that $B$ too contains either a bicycle or a theta, leading to a contradiction.

Let $t \in V(\hat B)$ be the vertex that is obtained by identifying the vertices $v, w$ of $B$, and let $F$ be the subgraph of $\hat B$ that is either a bicycle of a theta. Since $B$ is harmless, $t \in V(F)$, $N_F(t) \not\subseteq N_B(v)$ and $N_F(t) \not\subseteq N_B(w)$. Also, every $x \in N_F(t)$ has a neighbor in $\{v, w\}$ in $B$. Assume first that $|N_F(t)| = 2$, say $N_F(t) = \{a, b\}$. Then, replacing the path $a \d t \d b$ by a path from $a$ to $b$ with interior in $\{v, u, w\}$ gives a forbidden subgraph of the same kind in $B$, a contradiction. Next, assume that $|N_F(t)| = 3$, say $N_F(t) = \{a, b, c\}$. We may assume that $a, b$ are adjacent to $v$ in $B$, and $c$ is adjacent to $w$ in $B$. Observe that $B$ is obtained from $\hat B$ by subdividing the edge $tc$ twice. As this operation preserves the parity of the paths and cycles appearing in a bicycle or in a theta, this yields a forbidden subgraph of the same kind in $B$, a contradiction. See Figure \ref{fig:theta_in_B} for an illustration when $F$ is a theta.



\vspace{-0.2cm}

\begin{figure}[h]
\begin{center}
\begin{tikzpicture}[scale=0.23]

\node[label=left:{\small{$t$}}, inner sep=2.5pt, fill=black, circle] at (-2, 0)(v5){}; 
\node[label=above:{\small{$c$}}, inner sep=2.5pt, fill=black, circle] at (3, 3.2)(v3){}; 
\node[inner sep=2.5pt, fill=black, circle] at (3, 0)(v6){}; 
\node[label=below:{\small{$a$}}, inner sep=2.5pt, fill=black, circle] at (3, -3.2)(v4){};
\node[inner sep=2.5pt, fill=black, circle] at (8, 0)(v8){};

\node at (2.2, -1) {\small{$b$}};

\draw[black, thick] (v5) -- (v3);
\draw[black, thick] (v5) -- (v4);
\draw[black, thick] (v5) -- (v6);
\draw[black, thick] (v8) -- (v3);
\draw[black, thick] (v8) -- (v4);
\draw[black, thick] (v8) -- (v6);

\end{tikzpicture}
\hspace{2.5cm}
\begin{tikzpicture}[scale=0.23]

\node[label=left:{\small{$v$}}, inner sep=2.5pt, fill=black, circle] at (-2, 0)(v5){}; 
\node[label=above:{\small{$u$}}, inner sep=2.5pt, fill=black, circle] at (0, 3.2)(v3){}; 
\node[inner sep=2.5pt, fill=black, circle] at (3, 0)(v6){}; 
\node[label=below:{\small{$a$}}, inner sep=2.5pt, fill=black, circle] at (3, -3.2)(v4){};
\node[inner sep=2.5pt, fill=black, circle] at (8, 0)(v8){};

\node[label=above:{\small{$w$}}, inner sep=2.5pt, fill=black, circle] at (3, 3.2)(v9){}; 
\node[label=above:{\small{$c$}}, inner sep=2.5pt, fill=black, circle] at (6, 3.2)(v10){}; 

\node at (2.2, -1) {\small{$b$}};

\draw[black, thick] (v5) -- (v3);
\draw[black, thick] (v5) -- (v4);
\draw[black, thick] (v5) -- (v6);
\draw[black, thick] (v8) -- (v4);
\draw[black, thick] (v8) -- (v6);
\draw[black, thick] (v3) -- (v9);
\draw[black, thick] (v9) -- (v10);
\draw[black, thick] (v10) -- (v8);

\end{tikzpicture}
\end{center}
\vspace{-0.5cm}
\caption{Thetas in $\hat B$ and in $B$}
\label{fig:theta_in_B}
\end{figure}
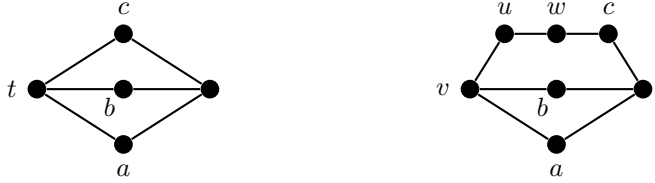

Hence, $|N_F(t)| = 4$ and $F$ is a bicycle where the even path connecting the two cycles $C_1$, $C_2$ is of length zero. Let $N_F(t) = \{a, b, c, d\}$, say $N_F(t) \cap V(C_1) = \{a, b\}$ and $N_F(t) \cap V(C_2) = \{c, d\}$. There are three cases (up to symmetry). If $a, b, c$ are adjacent to $v$ in $B$ and $d$ is adjacent to $w$ in $B$, then $B$ is obtained from $\hat B$ by subdividing the edge $td$ twice. If $a, b$ are adjacent to $v$ in $B$ and $c, d$ are adjacent to $w$ in $B$, then $B$ is obtained from $\hat B$ by replacing the vertex $t$ with the two-edge path $v \d u \d w$. In both cases, $B$ contains a bicycle. If $a, c$ are adjacent to $v$ in $B$ and $b, d$ are adjacent to $w$ in $B$, then $B$ contains a theta with the three even paths $v \d a \d C_1 \d b \d w$, $v \d u \d w$, and $v \d c \d C_2 \d d \d w$, a contradiction.
\end{proof}

By \eqref{eqn:singular_edges}, not both $E_v$ and $E_w$ contain a singular edge. Let $Z_{\hat B} = Z_B \setminus \{uw\}$ if $deg_B(w) = 1$ (i.e., if $E_w = \emptyset$), and $Z_{\hat B} = Z_B$ otherwise. Now, by the minimality of $G$, there exists a suitable matching $\hat M$ in $\hat B$ containing $Z_{\hat B}$. Since all the edges in $E_v \cup E_w$ share a common endpoint in $\hat B$, we have $|\hat M \cap (E_v \cup E_w)| = 1$. If $|\hat M \cap E_v| = 1$, let $M = \hat M \cup \{uw\}$, and if $|\hat M \cap E_w| = 1$, let $M = \hat M \cup \{uv\}$. Then, $M$ is a suitable matching in $B$ containing $Z_B$, a contradiction.
\end{proof}

Let $G$ be a claw-free graph such that $G$ is an augmentation of a graph $H$. Let $ab$ be a flat edge of $H$ that is augmented by a cobipartite graph $(A,B)$ in obtaining $G$. Being a flat edge, $a$ and $b$ do not have a common neighbor in $H$. Let $C = N_{H}(a) \setminus \{b\}$, $D = N_{H}(b) \setminus \{a\}$, and let $F$ be the common non-neighbors of $a$ and $b$ in $H$. In $G$, $A$ is complete to $C$ and anticomplete to $D \cup F$; and $B$ is complete to $D$ and anticomplete to $C \cup F$. The sets $C$ and $D$ are cliques since otherwise $G$ is not claw-free. We say that $(A, B, C, D, F)$ is an \emph{augmentation partition} of $G$ by the edge $ab$.

Let $x_1y_1, \dots, x_hy_h$ be $h$ pairwise disjoint flat edges of $H$ that are augmented by pairwise disjoint cobipartite graphs $(X_1,Y_1), \dots, (X_h, Y_h)$, respectively, to obtain $G$. We say $G$ is a \emph{smooth augmentation} of $H$ if for $i=1, \dots, h$, every $x \in X_i$ has a neighbor in $Y_i$, and every $y \in Y_i$ has a neighbor in $X_i$.

\begin{theorem}
\label{thm:not_augm_line_graph_of_bip}
If $(G,Z)$ is a suspect, then $G$ is not a smooth augmentation of the line graph of a bipartite multigraph.
\end{theorem}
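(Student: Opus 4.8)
The plan is to suppose $G$ is a smooth augmentation of $L(B_0)$ for a bipartite multigraph $B_0$ and reach a contradiction. First observe that no nontrivial augment can have a side of size one: if $(X,Y)$ is a smooth augment with $|X|=1$, then by smoothness $X$ is complete to $Y$, so every vertex of $Y$ has the same closed neighbourhood and any two of them are twins; since $G$ admits no twins (Theorem \ref{thm:suspect_starters}), $|Y|=1$ and the augment is trivial. Hence if all augments are trivial then $G=L(B_0)$, contradicting Theorem \ref{thm:not_line_graph_of_bip}. So I fix a nontrivial augment $(A,B)$ with $|A|,|B|\ge 2$ and its augmentation partition $(A,B,C,D,F)$: recall $A,B,C,D$ are cliques, $C\cup D\cup F=V(G)\setminus(A\cup B)$, $A$ is complete to $C$ and anticomplete to $D\cup F$, $B$ is complete to $D$ and anticomplete to $C\cup F$, and, by smoothness, every vertex of $A$ has a neighbour in $B$ and every vertex of $B$ has a neighbour in $A$.

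I then split on whether $G|(A\cup B)$ contains a square. If it does, I apply Lemma \ref{lem:get_W_join} to the pair of cliques $(A,B)$: its hypothesis holds because each vertex of $V(G)\setminus(A\cup B)$ lies in $C$, $D$, or $F$, each of which is complete or anticomplete to all of $A$ and complete or anticomplete to all of $B$, so no such vertex is mixed on two vertices of $A$ or on two vertices of $B$. This produces a proper coherent $W$-join in $G$, contradicting Theorem \ref{thm:no_prop_coh_W}. So I may assume $G|(A\cup B)$ has no square, i.e. ``$ab,a'b'$ edges implies $ab'$ or $a'b$ an edge'' for all $a,a'\in A$, $b,b'\in B$. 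Now Lemma \ref{lem:no_C4_order} applies to $(A,B)$, and since smoothness forbids a vertex of one side anticomplete to the other, some vertex of $A$ is complete to $B$ and some vertex of $B$ is complete to $A$; together with the fact that two vertices of $A$ (resp. $B$) with the same neighbourhood in the opposite side would be twins, a short computation gives $|A|=|B|=:p\ge 2$ and an indexing $A=\{a_1,\dots,a_p\}$, $B=\{b_1,\dots,b_p\}$ with $a_i\sim b_j\iff i+j>p$. In particular $a_p$ is complete to $A\cup B\cup C$ and anticomplete to $D\cup F$, while $b_1$ is complete to $\{a_p\}\cup B\cup D$ and anticomplete to $(A\setminus a_p)\cup C\cup F$.

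Next I dispose of $C=\emptyset$ or $D=\emptyset$. If $C=\emptyset$ then $A$ is anticomplete to $V(G)\setminus(A\cup B)$, and taking $V_1=A\cup B$, $V_2=D\cup F$, $A_1=B$, $A_2=D$ gives a $1$-join when $D,F\ne\emptyset$, contradicting Theorem \ref{thm:no_1_join}; if $F=\emptyset$ then $G$ is the cobipartite graph with parts $A$ and $B\cup D$, so Lemma \ref{lem:cobipartite} applies; and $C=D=\emptyset$ yields a $0$-join (contradicting connectedness, Theorem \ref{thm:suspect_starters}) or a cobipartite graph. The case $D=\emptyset$ is symmetric. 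So assume $C,D\ne\emptyset$; then no vertex of $A\cup B$ is simplicial (a vertex of $A$ has a $B$-neighbour and a $C$-vertex in its neighbourhood, and these are non-adjacent), so $Z\cap(A\cup B)=\emptyset$ by Theorem \ref{thm:suspect_starters}.

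The remaining staircase case is the heart of the argument and is handled by minimality. Let $G'=G\setminus\{a_p,b_1\}$; it is claw-free and innocent, $Z\subseteq V(G')$, and deletion preserves the property of $Z$ being a consistent set of safe vertices, so by minimality $G'$ has a strong stable set $S'$ with $Z\subseteq S'$. One checks that the maximal cliques of $G$ meeting $\{a_p,b_1\}$ are exactly $A\cup C$, $B\cup D$, and $M_k:=\{a_k,\dots,a_p\}\cup\{b_{p+1-k},\dots,b_p\}$ for $1\le k\le p$, and every other maximal clique of $G$ is a maximal clique of $G'$ and so meets $S'$. Writing $A'=A\setminus a_p$, $B'=B\setminus b_1$, one has $A\cup C=(A'\cup C)\cup\{a_p\}$, $B\cup D=(B'\cup D)\cup\{b_1\}$, and $M_k=M'_k\cup\{a_p\}$ for $k\le p-1$, where $M'_k$, $A'\cup C$, $B'\cup D$ are maximal cliques of $G'$. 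From this: if $S'\cap B'\ne\emptyset$ then $S=S'$ is already a strong stable set of $G$ containing $Z$; otherwise $S'\cap A'=\{a_{p-1}\}$ and $S'\cap C=\emptyset$, and $S=(S'\setminus\{a_{p-1}\})\cup\{a_p\}$ works, the point being that $a_p$ is complete to $A\cup B\cup C$ and anticomplete to $D\cup F$, so every maximal clique of $G$ meeting $A$ (hence every one containing $a_{p-1}$) contains $a_p$, and swapping in $a_p$ loses no coverage and breaks no stability. Either way $G$ has a strong stable set containing $Z$, contradicting that $(G,Z)$ is a suspect. The main obstacle I expect is precisely this last step: choosing the right pair of vertices to delete (the two ``saturating corners'' $a_p,b_1$), pinning down exactly which maximal cliques are not automatically covered, and patching $S'$ so that the few remaining cliques — all living inside $A\cup B$, $A\cup C$, or $B\cup D$ — get covered.
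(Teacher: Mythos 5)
Your proposal is correct, and its skeleton coincides with the paper's: reduce to a non-trivial augment $(A,B)$ with its augmentation partition, kill the case where $G|(A\cup B)$ has a square via Lemma \ref{lem:get_W_join} and Theorem \ref{thm:no_prop_coh_W}, and handle the square-free case by applying minimality to a proper induced subgraph. The divergence is in how the square-free case is finished. The paper shows $Z\cap(A\cup B)=\emptyset$ up front (via the $1$-join $(A\cup B, D\cup F)$ or Lemma \ref{lem:cobipartite} when a vertex of $Z$ lies in $A$), then simply deletes $M=(A\setminus u)\cup(B\setminus v)$, where $u,v$ are the vertices complete to the opposite side; because $\{u,v\}$ is a maximal clique of $\tilde G=G\setminus M$, the strong stable set of $\tilde G$ is already a strong stable set of $G$ with no patching, the only cliques to check being $A\cup C$, $B\cup D$, and cliques containing both $u$ and $v$. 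You instead use the no-twins condition to pin down the full staircase structure $a_i\sim b_j\iff i+j>p$, dispose of $C=\emptyset$ or $D=\emptyset$ separately (which in effect replaces the paper's argument that $Z\cap(A\cup B)=\emptyset$, using the same $1$-join/$0$-join/cobipartite tools), delete only the two corners $a_p,b_1$, and then patch the inherited set by possibly swapping $a_{p-1}$ for $a_p$, using that $N[a_{p-1}]\subseteq N[a_p]$. Both arguments are sound; the paper's choice of $\tilde G$ buys an immediate transfer of the stable set with no structural analysis of $G|(A\cup B)$ beyond Lemma \ref{lem:no_C4_order}, while yours buys a smaller deletion at the cost of the explicit chain-graph classification and the clique-by-clique patching verification.
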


\begin{proof}
Let $(G,Z)$ be a suspect and assume $G$ is a smooth augmentation of the line graph of a bipartite multigraph $J$. We let $H = L(J)$, and we follow the notation given before the statement of the theorem. If $|X_i| = |Y_i| = 1$ for every $i=1, \dots, h$, then, $G$ is the line graph of a bipartite multigraph, contrary to Theorem \ref{thm:not_line_graph_of_bip}. Thus, we may assume that $|X_1 \cup Y_1| >  2$. Let $(A = X_1, B = Y_1, C, D, F)$ be an augmentation partition of $G$ by the edge $x_1y_1$. As $G$ is a smooth augmentation of $H$, every vertex of $A$ has a neighbor in $B$, and every vertex of $B$ has a neighbor in $A$. Also, $A \cup C$ and $B \cup D$ are cliques.

We show that $Z \cap (A \cup B) = \emptyset$. Assume $Z \cap A \neq \emptyset$, say $z \in Z \cap A$. Then, $C = \emptyset$ since $z$ is a simplicial vertex and $z$ has a neighbor in $B$. If $F = \emptyset$, then $G$ is a cobipartite graph, and by Lemma \ref{lem:cobipartite}, $G$ has a strong stable set that contains $Z$, a contradiction. Thus, $F \neq \emptyset$. Then, $D \neq \emptyset$ since $G$ is connected by Theorem \ref{thm:suspect_starters}. But now, $(A \cup B, D \cup F)$ is a $1$-join in $G$, contrary to Theorem \ref{thm:no_1_join}. This proves that $Z \cap (A \cup B) = \emptyset$.

Assume first that there is no square in $G | (A \cup B)$. As every vertex of $A$ has a neighbor in $B$ and every vertex of $B$ has a neighbor in $A$, by Lemma \ref{lem:no_C4_order} and using the symmetry between $A$ and $B$, there exist vertices $u \in A$ and $v \in B$ such that $u$ is complete to $B$ and $v$ is complete to $A$. Let $M_A = A \setminus u$, $M_B = B \setminus v$, and $M = M_A \cup M_B$. Let $\tilde G = G \setminus M$. By the minimality of $G$, there exists a strong stable set $\tilde S$ in $\tilde G$ containing $Z$. We claim that $\tilde S$ is a strong stable set in $G$. Since $\{u, v\}$ is a maximal clique in $\tilde G$, either $u \in \tilde S$ or $v \in \tilde S$. Let $K$ be a maximal clique in $G$ such that $\tilde S \cap K = \emptyset$. Since $K$ is not a maximal clique of $\tilde G$, we deduce that $K \cap M \neq \emptyset$. If $K \cap M_A \neq \emptyset$ and $K \cap M_B = \emptyset$, then $K = A \cup C$. If $K \cap M_A = \emptyset$ and $K \cap M_B \neq \emptyset$, then $K = B \cup D$. If $K \cap M_A \neq \emptyset$ and $K \cap M_B \neq \emptyset$, then $u, v \in K$. So, in each case, $\tilde S \cap K \neq \emptyset$, a contradiction. This proves that $G | (A \cup B)$ contains a square.

Let $H = a_1 \d b_1 \d b_2 \d a_2 \d a_1$ be a square in $G | (A \cup B)$ with $a_1, a_2 \in A$ and $b_1, b_2 \in B$. Notice that no vertex of $V(G) \setminus (A \cup B)$ is mixed on $\{a_1, a_2\}$, and no vertex of $V(G) \setminus (A \cup B)$ is mixed on $\{b_1, b_2\}$. Thus, by Lemma \ref{lem:get_W_join} applied to the pair of cliques $(A, B)$, $G$ admits a proper coherent $W$-join, a contradiction to Theorem \ref{thm:no_prop_coh_W}.
\end{proof}

\section{The proof of the main theorem}\label{sec:proof}

In this section we present the proof of Theorem \ref{thm:stronger}, which we restate here one last time.

\begin{theorem}
\label{thm:stronger3}
Let $G$ be a claw-free innocent graph and let $Z$ be a consistent set of safe vertices in $G$. Then $G$ has a strong stable set that contains $Z$.
\end{theorem}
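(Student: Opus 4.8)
The plan is to argue by contradiction. Suppose Theorem~\ref{thm:stronger3} fails and let $(G,Z)$ be a suspect. Since every safe vertex is simplicial and, by Theorem~\ref{thm:suspect_starters}, every simplicial vertex of $G$ lies in $Z$, the set $Z$ is precisely the set of simplicial vertices of $G$. Let $G'$ be the graph obtained from $G$ by deleting all of its simplicial vertices, so $G'=G\setminus Z$. Being an induced subgraph of $G$, the graph $G'$ is claw-free and innocent, hence contains no odd hole and no odd antihole, so by Theorem~\ref{thm:SPGT} it is perfect. Therefore Theorem~\ref{thm:Chvatal_Sbihi} applies: $G'$ admits a clique cutset, or $G'$ is peculiar, or $G'$ is elementary. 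I would dispose of these three outcomes in turn, each time contradicting one of the structural facts about suspects proved in Section~\ref{sec:minimal_counter}.

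First suppose $G'$ admits a clique cutset. Since $G$ has no twins (Theorem~\ref{thm:suspect_starters}) and $G'$ is $G$ with its simplicial vertices removed, Observation~\ref{obs:simplicial_internal} promotes any clique cutset of $G'$ to an internal clique cutset of $G$. As $G$ is also connected and admits no $1$-join (Theorems~\ref{thm:suspect_starters} and~\ref{thm:no_1_join}), Theorem~\ref{thm:claw_free_internal_cc_proper} forces either that $G$ is a linear interval graph, contradicting Theorem~\ref{thm:not_linear_interval}, or that $G$ admits a coherent proper $W$-join, contradicting Theorem~\ref{thm:no_prop_coh_W}.

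Next suppose $G'$ is peculiar. If $Z=\emptyset$, then $G=G'$ is itself peculiar and, being innocent, contains no antihole of length at least six; Lemma~\ref{lem:peculiar} then yields a strong stable set of $G$, contradicting that $(G,Z)$ is a suspect. If $Z\neq\emptyset$, pick $z\in Z$; then $z$ is simplicial in $G$ and $K:=N_G(z)$ is a nonempty clique disjoint from $Z$ (as $Z$ is stable), so $K\subseteq V(G')$. By Observation~\ref{obs:simplicial_clique}, $K$ is a simplicial clique of $G\setminus z$, and since $G'$ is obtained from $G\setminus z$ by deleting further vertices, none of which belongs to $K$, the set $K$ is still a simplicial clique of $G'$. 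But a peculiar graph has no simplicial clique by Lemma~\ref{lem:peculiar_no_simp_clique}, a contradiction.

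Finally suppose $G'$ is elementary. By Theorem~\ref{thm:Maffray_Reed}, $G'$ is an augmentation of $H=L(J)$ for some bipartite multigraph $J$, and $J$ is harmless because $G'$ is innocent. When $Z=\emptyset$ this is already enough: then $G=G'$ has no simplicial vertex, so no augment vertex can be anticomplete to the opposite side of its augment (such a vertex would be simplicial in $G$), the augmentation is smooth, and Theorem~\ref{thm:not_augm_line_graph_of_bip} gives a contradiction. The substantive work is restoring the simplicial vertices $Z$: the task is to recover $G$ from $G'$ by re-attaching $Z$ and to recognise the resulting graph as (a smooth augmentation of) the line graph of a bipartite multigraph, which would contradict Theorem~\ref{thm:not_line_graph_of_bip} or Theorem~\ref{thm:not_augm_line_graph_of_bip}. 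I would first reduce to a smooth augmentation (an augment vertex with no neighbour on the opposite side is simplicial in $G'$ and can be treated as a degenerate augment), then determine where a simplicial clique can sit inside such a $G'$ — roughly, it must be the set of edges of $J$ incident with a fixed vertex, or else the augmented edge was pendant. Since for each $z\in Z$ the set $N_G(z)$ is such a simplicial clique of $G'$ and $Z$ is stable, re-attaching the vertices of $Z$ amounts to adding pendant edges to $J$, producing a bipartite multigraph $J'$ for which $G$ is a smooth augmentation of $L(J')$; one then checks that $J'$ stays harmless and that the augmentation remains smooth (again because the simplicial vertices of $G$ are exactly $Z$), contradicting Theorem~\ref{thm:not_augm_line_graph_of_bip}, or Theorem~\ref{thm:not_line_graph_of_bip} if no nontrivial augment survives. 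This bookkeeping — pinning down simplicial cliques inside an augmentation and verifying that the enlarged bipartite multigraph stays harmless — is the main obstacle; by contrast the clique-cutset and peculiar cases follow readily from the results of Section~\ref{sec:minimal_counter}.
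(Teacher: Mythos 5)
Your overall strategy coincides with the paper's: take a suspect $(G,Z)$, note $Z$ is exactly the set of simplicial vertices, form $G'=G\setminus Z$, apply Theorem~\ref{thm:Chvatal_Sbihi}, and kill the clique-cutset case via Observation~\ref{obs:simplicial_internal} and Theorems~\ref{thm:claw_free_internal_cc_proper}, \ref{thm:not_linear_interval}, \ref{thm:no_prop_coh_W}, \ref{thm:no_1_join}, and the peculiar case via Observation~\ref{obs:simplicial_clique} and Lemmas~\ref{lem:peculiar}, \ref{lem:peculiar_no_simp_clique}. Those two cases are handled correctly and exactly as in the paper.

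The gap is in the elementary case, which is where the real work lies and which you explicitly leave as ``bookkeeping''. Your plan rests on the claim that for each $z\in Z$ the simplicial clique $N_G(z)$ sits inside $G'$ as the set of (cliques corresponding to) \emph{all} edges of $J$ incident with some fixed vertex $t$, so that re-attaching $z$ amounts to adding a pendant edge at $t$ and $G$ is again a smooth augmentation of a line graph of a bipartite multigraph. That claim is not proved, and it is not what is true: what one can show (this is the paper's step \eqref{eq:there_exists_t}, which itself needs the preliminary facts that $G'$ is not cobipartite, that the augmentation is smooth, and that $z$ is anticomplete to every augmented clique $X_e$ with $e$ incident to a special vertex --- each proved by a clique-cutset contradiction) is only that the edges $e$ with $X_e$ meeting $N(z)$ all pass through one non-special vertex $t$; $z$ may well miss some of the edges at $t$, and then $G$ is \emph{not} obtained by adding a pendant edge, and need not be a (smooth augmentation of a) line graph of a bipartite multigraph at all, so neither Theorem~\ref{thm:not_line_graph_of_bip} nor Theorem~\ref{thm:not_augm_line_graph_of_bip} applies. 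The paper's proof confronts exactly this situation: re-attaching $z_1,\dots,z_k$ one at a time, at the first index where the structure breaks it partitions the edges at $t$ into the set $Y$ seen by $z_{i+1}$ and the nonempty set $N$ missed by it, uses claw-freeness to show no edge outside the star at $t$ meets an edge of $Y$, and concludes that $\ell(N)\setminus Z$ is a clique cutset of $G'$, contradicting the earlier no-clique-cutset claim. Without this dichotomy (``either every $z$ sees the whole star at $t$, or $G'$ has a clique cutset'') your argument does not close; the correct resolution of the bad case is a contradiction with the internal structure of $G'$, not a line-graph reconstruction of $G$. (Your auxiliary remark that a non-smooth augment vertex can be treated as a ``degenerate augment'' can be made to work, or replaced, as in the paper, by another clique-cutset contradiction, but it too needs an argument rather than an assertion.)
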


\begin{proof}
Suppose the theorem is false, and let $(G,Z)$ be a suspect. By Theorem \ref{thm:suspect_starters}, $G$ is connected, is not a complete graph, has no twins, and every simplicial vertex of $G$ is in $Z$. Let $G' = G \setminus Z$.

\vspace{-0.6cm}

\begin{equation}
\longbox{{\it $G'$ is connected.}}
\label{eq:G'_is_connected}
\end{equation}

\begin{proof}
Assume that $G'$ is not connected. As the vertices in $Z$ are simplicial in $G$, they do not have neighbors in two different components of $G'$. Since $Z$ is stable, $G$ is not connected, a contradiction.
\end{proof}

\vspace{-0.8cm}

\begin{equation}
\longbox{{\it $G'$ is not a complete graph.}}
\label{eq:G'_is_not_complete}
\end{equation}

\begin{proof}
Assume that $G'$ is a complete graph. Then $G' \neq G$. As there is no three-edge path between two distinct vertices of $Z$, by Lemma \ref{lem:no_C4_order}, we can order the vertices of $Z$, say $v_1, v_2, \dots, v_p$, in such a way that if $i \leq j$, then $N(v_i) \subseteq N(v_j)$. This implies that either $v_p$ is complete to $V(G')$ or there is a vertex $u \in V(G')$ such that $u$ is anticomplete to $Z$. In the first case $Z$, and in the second case $Z \cup \{u\}$ is a strong stable set in $G$ containing $Z$, a contradiction.
\end{proof}

\vspace{-0.8cm}

\begin{equation}
\longbox{{\it $G'$ is not a peculiar graph.}}
\label{eq:peculiar}
\end{equation}

\begin{proof}
Suppose $G'$ is peculiar. If $Z = \emptyset$, then $G = G'$ and $G$ is an innocent peculiar graph, and by Lemma \ref{lem:peculiar}, $G$ has a strong stable set that contains $Z$, a contradiction. Hence, $Z \neq \emptyset$. Now, let $z \in Z$. By Observation \ref{obs:simplicial_clique}, the set $N(z)$ is a simplicial clique in $G\setminus z$. Since $Z$ is a stable set, $N(z) \subseteq V(G')$, and so $N(z)$ is a simplicial clique in $G'$ as well, contrary to Lemma \ref{lem:peculiar_no_simp_clique}.
\end{proof}

\vspace{-0.8cm}

\begin{equation}
\longbox{{\it $G'$ does not admit a clique cutset.}}
\label{eq:clique_cutset}
\end{equation}

\begin{proof}
If $G'$ admits a clique cutset, then by Observation \ref{obs:simplicial_internal}, $G$ admits an internal clique cutset. As $G$ is connected and does not admit twins, Theorem \ref{thm:claw_free_internal_cc_proper} implies that either $G$ is a linear interval graph, or $G$ admits a coherent proper $W$-join, or $G$ admits a $1$-join, contrary to Theorems \ref{thm:not_linear_interval}, \ref{thm:no_prop_coh_W}, \ref{thm:no_1_join}.
\end{proof}

\vspace{-0.8cm}

\begin{equation}
\longbox{{\it $G'$ is not a cobipartite graph.}}
\label{eq:G'_is_not_cobipartite}
\end{equation}

\begin{proof}
Assume that $G'$ is a cobipartite graph $(A,B)$. By Lemma \ref{lem:cobipartite}, $Z \neq \emptyset$. As there is no three-edge path in $G$ between two vertices of $Z$, by Lemma \ref{lem:no_C4_order}, there is a vertex $z_A \in Z$ such that $N(z) \cap A \subseteq N(z_A) \cap A$ for every $z \in Z$. Similarly, there is a vertex $z_B \in Z$ such that $N(z) \cap B \subseteq N(z_B) \cap B$ for every $z \in Z$. Possibly $z_A = z_B$. Let $A_1 = A \cap N(z_A)$, $A_2 = A \setminus A_1$, and $B_1 = B \cap N(z_B)$, $B_2 = B \setminus B_1$.

\vspace{-0.6cm}

\begin{equation}\label{eqn:star}
\longbox{{\it For $a_1 \in A_1$, $N(a_1) \cap B_2$ is complete to $A_2$.}} \tag{$5.1$}
\end{equation}

\begin{proof}
Suppose not. Let $b_2 \in N(a_1) \cap B_2$ and let $a_2 \in A_2$ be a non-neighbor of $b_2$, then $\{a_1, a_2, b_2, z_A\}$ is a claw in $G$, a contradiction.
\end{proof}

If $G'$ contains no square, then by Lemma \ref{lem:no_C4_order}, there is a vertex $a \in A$ such that $N(a) \cap B \subseteq N(a') \cap B$ for every $a' \in A$, now $N_{G'}(a)$ is a clique cutset of $G'$, a contradiction to \eqref{eq:clique_cutset}. Therefore, there is a square $a_1 \d b_1 \d b_2 \d a_2 \d a_1$ in $G'$. By Observation \ref{obs:path_to_hole}, $a_1, a_2 \in A_2$ and $b_1, b_2 \in B_2$. Suppose that $A_1 \neq \emptyset$ and  $B_1 \neq \emptyset$, let $a \in A_1$ and $b \in B_1$. By \eqref{eqn:star}, $a$ is anticomplete to $\{b_1, b_2\}$ and $b$ is anticomplete to $\{a_1, a_2\}$. Since $G$ contains no odd prism, we deduce that $a$ is non-adjacent to $b$. Consequently, $z_Ab$ and $z_Ba$ are non-edges as $z_A, z_B$ are simplicial vertices. But now, $z_A \d a \d a_1 \d b_1 \d b \d z_B$ is a path of length five, contrary to the fact that $Z$ is consistent. Thus, we may assume that $B_1 = \emptyset$, and so $Z$ is anticomplete to $B$.

Now, let $ab$ be a cosimplicial non-edge in $G | (A_2 \cup B)$ where $a \in A$, $b \in B$. Then, since $S = Z \cup \{a, b\}$ is not a strong stable set in $G$, there exists a maximal clique $C$ such that $S \cap C = \emptyset$. Since $C$ is maximal, it contains a non-neighbor $b'$ of $b$, and a non-neighbor $a'$ of $a$. Then $b' \in A$ and $a' \in B$. Since $a'b'$ is an edge and $ab$ is a cosimplicial non-edge in $G | (A_2 \cup B)$, it follows that $b' \in A_1$. But $b'$ is complete to $\{a, a'\}$ contrary to \eqref{eqn:star}. This completes the proof of \eqref{eq:G'_is_not_cobipartite}.
\end{proof}

Being an induced subgraph of $G$, the graph $G'$ is innocent, and therefore perfect. By Theorem \ref{thm:Chvatal_Sbihi}, $G'$ is either peculiar, or elementary, or admits a clique cutset. By \eqref{eq:peculiar} and \eqref{eq:clique_cutset}, $G'$ is elementary. Then by Theorem \ref{thm:Maffray_Reed}, $G'$ is an augmentation of the line graph of a bipartite multigraph. Let $G''$ be the line graph of a bipartite multigraph $B''$, i.e., $G'' = L(B'')$, and let $G'$ be an augmentation of $G''$. Note that $B''$ is connected since otherwise $G''$ is not connected, and so $G'$ is not connected, contrary to \eqref{eq:G'_is_connected}.

Let $\ell(a) \ell(b)$ be a flat edge of $G''$ where $a, b \in E(B'')$. If $a, b$ are parallel edges, then no other edge in $B''$ shares a vertex with $a$ or $b$, and since $B''$ is connected, it follows that $E(B'') = \{a, b\}$, and so $G'$ is a cobipartite graph, contrary to \eqref{eq:G'_is_not_cobipartite}. Hence, if $\ell(a) \ell(b)$ is a flat edge of $G''$, then there exist three distinct vertices $u, v, w \in V(B'')$ such that $N_{B''}(v)=\{u, w\}$ and $a = vu$, $b = vw$. Conversely, if such three vertices $u, v, w \in V(B'')$ exist, then $\ell(a) \ell(b)$ is a flat edge in $G''$. So, for every flat edge $\ell(a) \ell(b)$ of $G''$, there is a vertex $v \in V(B'')$ with $deg_{B''}(v) = 2$ as described above. We call $v$ \emph{special} if the flat edge $\ell(a) \ell(b)$ of $G''$ is augmented in obtaining $G'$, and \emph{non-special} otherwise.

Thus, $G'$ is obtained from $B''$ as follows. Every edge $e \in E(B'')$ corresponds to a clique $X_e$ in $G'$. Let $e_1, e_2 \in E(B'')$. If $e_1, e_2$ are distinct, then $X_{e_1} \cap X_{e_2} = \emptyset$. If $e_1, e_2$ have no common endpoint, then $X_{e_1}$ is anticomplete to $X_{e_2}$ in $G'$. If $e_1, e_2$ are incident with a common vertex of degree at least three or incident with a common non-special vertex, then $X_{e_1}$ is complete to $X_{e_2}$ in $G'$. Lastly, if $e_1, e_2$ are incident with a common special vertex, then $G' | (X_{e_1}, X_{e_2})$ is cobipartite and $|X_{e_1} \cup X_{e_2}| > 2$.

\vspace{-0.4cm}

\begin{equation}
\longbox{{\it Let $e_1, e_2 \in E(B'')$ be incident with a special vertex $v \in V(B'')$. Then, in $G'$, every vertex of $X_{e_1}$ has a neighbor in $X_{e_2}$, and every vertex of $X_{e_2}$ has a neighbor in $X_{e_1}$.}}
\label{eq:G'_smooth}
\end{equation}

\begin{proof}
Let $(X_{e_1}, X_{e_2}, C, D, F)$ be the augmentation partition of $G'$ by the flat edge $\ell(e_1) \ell(e_2) \in E(G'')$. Let $x \in X_{e_1}$ be anticomplete to $X_{e_2}$ in $G'$. Then, $(X_{e_1} \setminus x) \cup C$ is a clique cutset in $G'$ separating $x$ from $X_{e_2} \cup D \cup F$, a contradiction to \eqref{eq:clique_cutset}.
\end{proof}

\vspace{-0.7cm}

\begin{equation}
\longbox{{\it Let $e_1 \in E(B'')$ be incident with a special vertex $v$ in $B''$, and let $z \in Z$. Then, $z$ is anticomplete to $X_{e_1}$ in $G$.}}
\label{eq:z_special}
\end{equation}

\begin{proof}
Let $e_2$ be the other edge that is incident to $v$ in $B''$, and let $(X_{e_1}, X_{e_2}, C, D, F)$ be the augmentation partition of $G'$ by the flat edge $\ell(e_1) \ell(e_2) \in E(G'')$. If $C = \emptyset$, then $X_{e_2}$ is a clique cutset in $G'$ separating $X_{e_1}$ from $D \cup F$, contrary to \eqref{eq:clique_cutset}, unless $D \cup F = \emptyset$, in which case $G'$ is cobipartite, contrary to \eqref{eq:G'_is_not_cobipartite}. Therefore $C \neq \emptyset$, and similarly $D \neq \emptyset$. Now, assume that there is no path $P$ from $C$ to $D$ with $P^* \subseteq F$. Then $F = F_C \cup F_D$ such that $C \cup F_C$ is anticomplete to $D \cup F_D$. But then $X_{e_1}$ is a clique cutset in $G'$ contrary to \eqref{eq:clique_cutset}. Hence, there is a path $P$ from $C$ to $D$ with $P^* \subseteq F$.

Assume that $z$ has a neighbor $x$ in $X_{e_1}$. By \eqref{eq:G'_smooth}, $x$ has a neighbor $y \in X_{e_2}$. Now, $V(P) \cup \{x, y\}$ is a hole, and $z$ has a neighbor in this hole, contrary to Observation \ref{obs:path_to_hole}.
\end{proof}

\vspace{-0.7cm}

\begin{equation}
\longbox{{\it For every $z \in Z$, there exists a non-special vertex $t \in V(B'')$ such that every $e \in E(B'')$ with $X_e$ not anticomplete to $z$ is incident with $t$.}}
\label{eq:there_exists_t}
\end{equation}

\begin{proof}
Let $z \in Z$ have neighbors in $X_{e_1}, \dots, X_{e_k} \subseteq V(G')$ where $e_i \in E(B'')$ for $i=1,\dots,k$. Since $z$ is simplicial, there are edges between $X_{e_i}$ and $X_{e_j}$ for $i \neq j$, and so for every $i, j$, the edges $e_i, e_j$ share an endpoint in $B''$. Since $B''$ is bipartite, there exists $t \in V(B'')$ such that $e_1, \dots, e_k$ are incident with $t$. Also, by \eqref{eq:z_special}, $t$ is non-special.
\end{proof}

By \eqref{eq:G'_smooth}, it follows that $G'$ is a smooth augmentation of $G''$. Then, by Theorem \ref{thm:not_augm_line_graph_of_bip}, $Z \neq \emptyset$. Let $Z = \{z_1, \dots, z_k\}$. Let $G_0 = G'$, $G_k = G$, and for $i=0, 1, \dots, k-1$, let $G_i = G_{i+1} \setminus z_{i+1}$. Let $i$ be minimum such that $G_{i+1}$ is not a smooth augmentation of the line graph of a bipartite multigraph. Then, $0 \leq i \leq k-1$. Let $G_i$ be a smooth augmentation of $L(B_i)$ where $B_i$ is a bipartite multigraph.

Note that $B''$ and $B_i$ have the same set of special vertices. By \eqref{eq:z_special}, if $a \in E(B_i)$ is incident with a special vertex in $B_i$, then $X_a \cap \{z_1, \dots, z_i\} = \emptyset$ and $z_{i+1}$ is anticomplete to $X_a \subseteq V(G_i)$. Let $t \in V(B_i)$ be as in \eqref{eq:there_exists_t} for $z_{i+1}$, and let $I = \{a_1, \dots, a_m\}$ denote the set of edges incident with $t$ in $B_i$. We can partition $I$ as $I = Y \cup N$ ($Y$ for yes, $N$ for no) where $Y = \{a_j : z_{i+1} \text{ has a neighbor in } X_{a_j} \subseteq V(G_i)\}$ and $N = \{a_j : z_{i+1} \text{ is anticomplete to } X_{a_j} \subseteq V(G_i)\}$. Since $G_{i+1}$ is not a smooth augmentation of the line graph of a bipartite multigraph, $N \neq \emptyset$.

Let $a_k \in Y$ and $a_l \in N$. If there is an edge $e$ in $E(B_i) \setminus I$ sharing a vertex with $a_k$, then, $\{\ell(a_k), \ell(a_l), \ell(e), z_{i+1}\}$ is a claw in $G$. Hence, no edge in $E(B_i) \setminus I$ shares a vertex with an edge in $Y$.

By \eqref{eq:G'_is_not_complete}, $E(B'') \not\subseteq I$ since $G'$ is not a complete graph. Also, $\ell(Y) \cap Z = \emptyset$ since $Z$ is a stable set. But now, $\ell(N)$ is a clique cutset in $G_i$ separating $\ell(Y)$ from $\ell(E(B_i) \setminus I)$. It follows that $\ell(N) \setminus Z$ is a clique cutset in $G'$ separating $\ell(Y)$ from $\ell(E(B'') \setminus I)$. Hence, we get a contradiction to \eqref{eq:clique_cutset}. This completes the proof of Theorem \ref{thm:stronger3}.
\end{proof}

\section{Acknowledgment}
The authors would like to thank Sophie Spirkl for many helpful discussions.



\begin{thebibliography}{50}

\bibitem{BD}
C.~Berge, P.~Duchet, ``Strongly perfect graphs", In Topics on Perfect Graphs, {\it  Annals of Discrete Mathematics}, 21 (1984), 57--61.

\bibitem{SPGT}
M.~Chudnovsky, N.~Robertson, P.~Seymour, R.~Thomas, ``The strong perfect graph theorem", {\it Annals of Mathematics}, 164 (2006), 51--229.

\bibitem{Claw3}
M.~Chudnovsky, P.~Seymour, ``Claw-free graphs. III. Circular interval graphs'', {\it J. Combin. Theory Ser. B}, 98 (2008), 812--834.

\bibitem{Chvatal1}
V.~Chv\'atal, ``Perfect orderable graphs", {\it Annals of Discrete Mathematics}, 21 (1984), 63--65.

\bibitem{Chvatal2}
V.~Chv\'atal, ``Which claw-free graphs are perfectly orderable?", {\it Discrete Applied Mathematics}, 44 (1993), 39--63.

\bibitem{Chvatal_Sbihi}
V.~Chv\'atal, N.~Sbihi, ``Recognizing claw-free Berge graphs", {\it J. Combin. Theory Ser. B}, 44 (1988), 154--176.

\bibitem{Golumbic_Ross}
M.C.~Golumbic, C.F.~Goss, ``Perfect elimination and chordal bipartite graphs", {\it J. Graph Theory}, 2 (1978), 155--163.

\bibitem{Andrei}
A.~Graur, ``Line graphs of bipartite graphs that are strongly perfect", Junior Paper, Princeton University, (2019).

\bibitem{HammerPS}
P.L.~Hammer, U.N.~Peled, X.~Sun, ``Difference Graphs", {\it Discrete Applied Mathematics}, 28 (1990), 35--44.

\bibitem{Maffray_Reed}
F.~Maffray, B.A.~Reed, ``A description of claw-free perfect graphs", {\it J. Combin. Theory Ser. B}, 75(1) (1999), 134--156.

\bibitem{Ravindra}
G.~Ravindra, ``Strongly perfect line graphs and total graphs", {\it Finite and Infinite Sets}, page 621, 1984.

\bibitem{Ravindra2}
G.~Ravindra, ``Research problems", {\it Discrete Mathematics}, 80 (1990), 105--107.

\bibitem{Ravindra3}
G.~Ravindra, ``Some classes of strongly perfect graphs", {\it Discrete Mathematics}, 206 (1999), 197--203.

\bibitem{Wang}
H.Y.~Wang, ``Which claw-free graphs are strongly perfect?", {\it Discrete Mathematics}, 306 (19-20)(2006), 2602--2629.

\end{thebibliography}
\end{document}